\documentclass{tran-l}
\usepackage[mathscr]{eucal}
\usepackage[breaklinks=true]{hyperref}

\usepackage{xcolor}
\usepackage{amsmath,amsthm,amsfonts,amssymb,tocvsec2}
\usepackage{mathdots}
\usepackage{mathtools}
\usepackage{caption}
\usepackage{subcaption}
\usepackage{float}
\usepackage{multicol}
\usepackage{enumitem}

\theoremstyle{plain}
\newtheorem{theorem}{Theorem}[section]

\newtheorem{lemma}[theorem]{Lemma}
\newtheorem{prop}[theorem]{Proposition}
\newtheorem{cor}[theorem]{Corollary}
\newtheorem{utheorem}{\textrm{\textbf{Theorem}}}

\theoremstyle{definition}
\newtheorem{defn}[theorem]{Definition}

\newtheorem{rem}[theorem]{Remark}

\numberwithin{equation}{section}

\DeclareMathOperator{\sgn}{sgn}

\DeclareMathOperator{\diag}{diag}

\begin{document}
	\title[Entrywise preservers of sign regularity]{Entrywise preservers of sign regularity}
	
		\author{Projesh Nath Choudhury and Shivangi Yadav}
		\address[P.N.~Choudhury]{Department of Mathematics, Indian Institute of Technology Gandhinagar, Gujarat 382355, India}
		\email{\tt projeshnc@iitgn.ac.in}
		\address[S.~Yadav]{Department of Mathematics, Indian Institute of Technology Gandhinagar, Gujarat 382355, India}
		\email{\tt shivangi.yadav@iitgn.ac.in, shivangi97.y@gmail.com}
	
%	\date{\today}
	
	\begin{abstract}
		Entrywise functions preserving positivity and related notions have a rich history, beginning with the seminal works of Schur, P\'olya--Szeg\H{o}, Schoenberg, and Rudin. Following their classical results, it is well-known that entrywise functions preserving positive semidefiniteness for matrices of all dimensions must be real analytic with non-negative Taylor coefficients. These works were taken forward in the last decade by Belton, Guillot, Khare, Putinar, and Rajaratnam. Recently, Belton--Guillot--Khare--Putinar [\textit{J. d'Analyse Math.} 2023] classified all functions that entrywise preserve totally positive (TP) and totally non-negative (TN) matrices. In this paper, we study entrywise preservers of strictly sign regular and sign regular matrices -- a class that includes TP/TN matrices as special cases and was first studied by Schoenberg in 1930 to characterize variation diminution. Our main results provide complete characterizations of entrywise transforms of rectangular matrices which preserve: (i)~sign regularity and strict sign regularity, as well as (ii)~sign regularity and strict sign regularity with a given sign pattern.
	\end{abstract}
	
	\subjclass[2020]{15B48 (primary); 39B62, 15A15 (secondary)}
	
	\keywords{Total positivity, strict sign regularity, sign regularity, entrywise preserver, Cauchy functional equation, Cauchy--Binet formula}
	
	\maketitle
	
	\vspace*{-11mm}
	\settocdepth{section}
	\tableofcontents
	
	\section{Introduction and main results}
	In this work, we continue recent progresses in the evergreen field of preserver problems, by classifying the entrywise preservers of strictly sign regular and sign regular matrices. Recall that a real rectangular matrix $A$ is said to be \textit{strictly sign regular} (SSR) if there exists a sequence of signs $\epsilon_k\in\{\pm1\}$ such that every $k\times k$ minor of $A$ has sign $\epsilon_k$ for all $k\geq1$. If the signs $\epsilon_k$ are all positive, the matrix $A$ is called \textit{totally positive}. When zero minors are also allowed in the preceding definitions, the matrix $A$ is called \textit{sign regular} (SR) (respectively, \textit{totally non-negative}). These matrices have featured in diverse areas in mathematics, including analysis, approximation theory, cluster algebras, combinatorics, differential equations, Gabor analysis, integrable systems, matrix theory, probability and statistics, Lie theory, and representation theory \cite{Ando87,BFZ96,Bre95,C22,fallat-john,FZ00,FZ02,gantmacher-krein,GRS18,Karlin64,K68,Karlinsplines,Khare20,KW14,Lo55,Lu94,pinkus,Ri03,Schoenberg46,S55,Whitney}.
	
	One of the earliest known results on SR matrices can be traced back to Schoenberg~\cite{S30}, who in 1930 studied the variation diminishing property using sign regularity. Variation diminution is a prominent property in total positivity theory -- it was studied by Gantmacher--Krein, Motzkin, and Brown--Johnstone--MacGibbon to characterize SSR and SR matrices with various constraints (see~\cite{BJM81,GK50,Mot36}). In our
	recent work~\cite{CY-VDP23}, we got rid of all these constraints and provided a complete characterization of when an arbitrary real rectangular matrix is SR and SSR using variation diminution. We further strengthen these characterizations by providing a single test vector with alternating sign coordinates for each square submatrix.
	
	A fundamental problem in the literature has been that of identifying the preservers of various structures, including forms of positivity and related notions. These problems have been studied in different frameworks, such as the statistics of big data \cite{BL08,HR11,RLZ09} and global optimization algorithms based on positive definite polynomials \cite{BPR12,HMPV09}. The first result in the preservers literature goes back to Frobenius~\cite{Frobenius1897}, who characterized the general form of determinant preserving linear maps in 1897. In prior recent work~\cite{CY-LPP24}, we classified all surjective linear operators that preserve sign regularity and strict sign regularity -- with fixed as well as arbitrary sign patterns.
	
	The goal of this work is to characterize entrywise functions that preserve the set of SR/SSR matrices of a given dimension. The study of entrywise calculus was initiated by Schur~\cite{Schur11} in 1911, when he proved the celebrated Schur product theorem. Building on this, P\'olya--Szeg\H{o}~\cite{PS25} in 1925 showed that real analytic functions with non-negative Taylor coefficients when applied entrywise to positive semidefinite matrices preserve positive semidefiniteness. The converse to this result -- that these are the only continuous functions which preserve positive semidefiniteness was shown by Schoenberg~\cite{Schoenberg42} in 1942. In 1959, Rudin~\cite{Rudin59} strengthened Schoenberg's result by removing the continuity hypothesis. Since then, this has been extensively studied in various settings~\cite{BGKP16,GKR16,GKR_16,GKR17}.
	
	We now come to the question of characterizing entrywise preservers of sign regularity and strict sign regularity. Recently, Belton--Guillot--Khare--Putinar~\cite{BGKP20} classified all entrywise preservers for the classes of totally positive and totally non-negative matrices. In this paper, we extend these results in two ways. First, we classify the entrywise preservers of SR and SSR matrices for each specified sign pattern. Second, we characterize the entrywise preservers of SR and SSR matrices allowing all sign patterns. To state our results, we introduce the following definitions and notations.
	
	\begin{defn}
		Let $m,n\geq1$ denote integers throughout this manuscript.
		\begin{itemize}
			\item[(i)] A square matrix whose all anti-diagonal elements are $1$ and the remaining entries are $0$ is called an \textit{exchange matrix}. Let $P_n$ denote the $n\times n$ exchange matrix:
			\[P_n:=\begin{pmatrix}
				0 & \cdots & 1 \\
				\vdots & \iddots & \vdots \\
				1 & \cdots & 0 \\
			\end{pmatrix} \in \{ 0, 1 \}^{n\times n}.\]
			
			\item[(ii)] The \textit{sign pattern} of an $m\times n$ SR/SSR matrix $A$ is an ordered tuple $\epsilon=(\epsilon_1,\ldots,\epsilon_{\min\{m,n\}})$, where each $\epsilon_k\in\{\pm1\}$ represents the sign of the $k\times k$ minors of $A$.
			
			\item[(iii)] Let $\epsilon=(\epsilon_1,\ldots,\epsilon_{\min\{m,n\}})$ be a given sign pattern. We use the notation SSR$(\epsilon)$ to denote $m\times n$ SSR matrices whose $k\times k$ minors are all non-zero and have sign $\epsilon_k$ for all $1\leq k\leq \min\{m,n\}$. Similarly, SR$(\epsilon)$ denotes $m\times n$ SR matrices whose $k\times k$ minors are either zero or have sign $\epsilon_k$ for all $1\leq k\leq \min\{m,n\}$. Note that $\epsilon_1$ determines if the entries are all positive/negative or non-negative/non-positive.
			
			\item[(iv)] Let $f:I\subseteq\mathbb{R}\to\mathbb{R}$ be a function. For an $m\times n$ matrix $A=(a_{ij})$ with entries $a_{ij}\in I$, the \textit{entrywise action} of $f$ on $A$ is defined by
			\[f[A] := (f(a_{ij}))\in\mathbb{R}^{m\times n}.\]
			In particular, for the power function $f(x)=x^{\alpha}$, with $\alpha\in\mathbb{R}$, we denote the entrywise application of $f$ to $A$ by $A^{\circ\alpha}:=(a_{ij}^{\alpha})$. We set $0^0:=1$.
				
			\item[(v)] Let $I\subseteq\mathbb{R}$ and $f:I\to\mathbb{R}$ be a function. Given a set $S$ of matrices with entries in $I$, we say that $f$ \textit{entrywise preserves} $S$ if, for every $A\in S$, the matrix $f[A]\in S$.
				
		\end{itemize}
	\end{defn}
	Our first main result characterizes the entrywise functions that preserve the class of $m\times n$ SR$(\epsilon)$ matrices for any given sign pattern $\epsilon$ with $\epsilon_1=1$.
	
	\begin{utheorem}\label{Theorem_SR_Fixed_Classification}
		Let $m,n\geq1$ be integers, and let $\epsilon=(\epsilon_1,\ldots,\epsilon_d)$ be a given sign pattern with $\epsilon_1=1$, where $d:=\min\{m,n\}$. Let $f:[0,\infty)\to\mathbb{R}$ be a function. Then the following statements are equivalent.
		\begin{itemize}
			\item[(1)] $f[-]$ preserves the class of $m\times n$ SR$(\epsilon)$ matrices.
			\item[(2)] $f[-]$ preserves the class of $d\times d$ SR$(\epsilon)$ matrices.
			\item[(3)]
			\begin{itemize}
				\item[(a)] For $d=1:$ The function $f$ is non-negative.
				\item[(b)] For $d=2:$ $f(x)=c\sgn(x)$ or $f(x)=cx^{\alpha}$ for some $\alpha\in[0,\infty)$ and some $c\geq0$.
				\item[(c)] For $d=3:$ $f(x) = \begin{cases}
					c\sgn(x) ~ \text{or} ~ cx^{\alpha} ~ \text{for some} ~ \alpha\in[0,1] ~ \text{and some} ~ c\geq0, &\text{if} ~ \epsilon_2\neq\epsilon_3, \\
					cx^{\alpha} ~ \text{for some} ~ \alpha\in\{0\}\cup[1,\infty) ~ \text{and some} ~ c\geq0, &\text{if} ~ \epsilon_2=\epsilon_3.
				\end{cases}$
				
				\item[(d)] For $d\geq4:$ $f(x) = \begin{cases}
					c\sgn(x) ~ \text{or} ~ cx^{\alpha} ~ \text{for some} ~ \alpha\in\{0,1\} ~ \text{and some} ~ c\geq0, &\text{if} ~ \epsilon_2\neq\epsilon_3, \\
					cx^{\alpha} ~ \text{for some} ~ \alpha\in\{0,1\} ~ \text{and some} ~ c\geq0, &\text{if} ~ \epsilon_2=\epsilon_3.
				\end{cases}$
			\end{itemize}
		\end{itemize}
	\end{utheorem}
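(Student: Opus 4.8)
The plan is to prove $(1)\Leftrightarrow(2)$ by elementary padding and restriction, and then $(2)\Leftrightarrow(3)$ in two stages: first reduce an arbitrary preserver to a power map $cx^{\alpha}$ or a multiple $c\,\sgn$, then decide which exponents (and whether $\sgn$) actually occur. For $(1)\Rightarrow(2)$, pad a $d\times d$ SR$(\epsilon)$ matrix with $m-d$ zero rows and $n-d$ zero columns; every minor of the result is a minor of the original or vanishes, so it is SR$(\epsilon)$, and applying $f[-]$ and reading off the original $d\times d$ block gives the claim. For $(2)\Rightarrow(1)$, any $k\times k$ submatrix of $f[A]$ with $k\le d$ sits inside $f[B]$ for a $d\times d$ submatrix $B$ of $A$ (enlarge the chosen rows and columns), and $B\in$ SR$(\epsilon)$. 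The same padding shows that a preserver of $k\times k$ SR$((\epsilon_1,\dots,\epsilon_k))$ matrices is automatically a preserver of $(k{-}1)\times(k{-}1)$ SR$((\epsilon_1,\dots,\epsilon_{k-1}))$ matrices, so one may descend freely in size. I would also record the two equivariances available to entrywise maps, $f[A^{T}]=f[A]^{T}$ and $f[P_mAP_n]=P_mf[A]P_n$, noting that reversing all rows (or all columns) sends SR$(\epsilon)$ to SR$(\epsilon')$ with $\epsilon'_k=(-1)^{\binom k2}\epsilon_k$; this flips $\epsilon_2,\epsilon_3$ and fixes $\epsilon_1$, so one may assume $\epsilon_2=1$, reducing the dichotomy ``$\epsilon_2=\epsilon_3$'' versus ``$\epsilon_2\neq\epsilon_3$'' to ``$\epsilon_3=1$'' versus ``$\epsilon_3=-1$''.

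The case $d=1$ is immediate, and $d=2$ is a functional-equation computation: the matrices $\left(\begin{smallmatrix}a&\sqrt{ab}\\\sqrt{ab}&b\end{smallmatrix}\right)$ and the one obtained by swapping its rows both have a vanishing $2\times2$ minor and nonnegative entries, hence lie in SR$((1,1))$, and testing $f[-]$ on them forces $f(\sqrt{ab})^2=f(a)f(b)$ for all $a,b>0$; testing on $\left(\begin{smallmatrix}b&1\\ a&1\end{smallmatrix}\right)$ with $a\le b$ forces $f$ to be nondecreasing. A nonnegative nondecreasing solution of this multiplicative relation on $(0,\infty)$ is either identically $0$ there or $x\mapsto cx^{\alpha}$ with $c>0$, $\alpha\ge0$ (monotonicity kills the pathological additive solutions after passing to logarithms), and the residual freedom at $0$ produces the $\sgn$ alternative when $\alpha=0$ while the vanishing branch is $c=0$. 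For $d\ge3$, restricting to $2\times2$ submatrices therefore pins $f$ down to $c\,\sgn$ or $cx^{\alpha}$ with $c\ge0$, $\alpha\ge0$; the constant $c>0$ is then immaterial, since $f[A]=cA^{\circ\alpha}$ and a positive scalar preserves the sign of every minor, while $c=0$ (the zero function) always preserves SR$(\epsilon)$, so one reduces to the maps $x^{\alpha}$ and $\sgn$.

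It remains to identify the admissible $\alpha$ (and whether $\sgn$ qualifies), and here the determinant of a Hadamard power does the work. Restricting to $3\times3$ submatrices, one needs $\det(A^{\circ\alpha})$ to have sign $\epsilon_3$ for every $3\times3$ SR$((1,\epsilon_2,\epsilon_3))$ matrix $A$ (the $\le2\times2$ minors are automatic from monotonicity of $x\mapsto x^{\alpha}$). On the affirmative side: for $\epsilon_3=\epsilon_2$ this is the $3\times3$ totally-non-negative ``critical exponent'' statement (the case $\epsilon_3=\epsilon_2=-1$ reducing to $\epsilon_3=\epsilon_2=1$ by column reversal), giving $\alpha\in\{0\}\cup[1,\infty)$; for $\epsilon_3\neq\epsilon_2$ one shows $\det(A^{\circ\alpha})$ keeps the sign of $\det A$ for $\alpha\in[0,1]$, which I would obtain from an explicit determinantal identity or from a Bernstein-type integral representation of $x^{\alpha}$ ($0<\alpha<1$) in terms of the matrices $(e^{-\lambda a_{ij}})$. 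On the negative side one exhibits strictly sign-regular $3\times3$ witnesses --- e.g.\ $\left(\begin{smallmatrix}1&1&1\\1&2&3\\1&3&t\end{smallmatrix}\right)$ with $t$ just under $5$, which is SSR$((1,1,-1))$ and whose Hadamard power has determinant of the wrong sign for large $\alpha$ --- and lets such witnesses tend to the boundary $\det=0$, so that the sign change of $\det(A^{\circ\alpha})$ occurs for $\alpha$ arbitrarily close to $1$, excluding every exponent outside the claimed set. A single support-pattern example (a tridiagonal TN matrix whose $0/1$ support has negative determinant) excludes $\sgn$ when $\epsilon_3=\epsilon_2$, whereas for $\epsilon_3\neq\epsilon_2$ one checks directly that the $0/1$ support patterns that can occur are themselves SR$(\epsilon)$, so $\sgn$ (hence $c\,\sgn$) is a genuine preserver. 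Finally, for $d\ge4$ the surviving ranges are cut down to $\{0,1\}$ by repeating the analysis at the $4\times4$ level: one produces $4\times4$ SR$(\epsilon)$ matrices near the locus $\det=0$ whose $\alpha$-th Hadamard powers fail to keep the sign $\epsilon_4$ for any $\alpha\notin\{0,1\}$, and since every $d\ge4$ has $4\times4$ submatrices (and $\{0,1\}$ obviously works in all sizes) this settles all $d\ge4$ at once.

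The main obstacle is precisely this Hadamard-power determinant analysis. On the positive side one must prove the sign inequality for $\det(A^{\circ\alpha})$ \emph{uniformly} over the noncompact family of $3\times3$ SR$((1,\epsilon_2,\epsilon_3))$ matrices; on the negative side, especially for $d\ge4$, one must control the $\alpha$-dependence of $\det(A^{\circ\alpha})$ finely near the locus $\det A=0$ --- choosing boundary matrices so that the leading perturbative term, as an explicit function of $\alpha$, changes sign outside $\{0,1\}$ --- while ensuring these matrices genuinely lie in SR$(\epsilon)$. The remaining ingredients (the padding arguments, the $d=2$ functional equation, and the $0/1$-support bookkeeping) are comparatively routine.
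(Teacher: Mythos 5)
Your overall architecture matches the paper's: padding for $(1)\Leftrightarrow(2)$, a $2\times2$ functional equation plus monotonicity (and an Ostrowski/Darboux-type rigidity) to reduce a general preserver to $c\,x^{\alpha}$ or $c\,\sgn$, the $P_n$-reversal to normalize $\epsilon_2=1$, singular and near-singular witnesses to exclude exponents, and a $0/1$-support analysis for $\sgn$. The $3\times3$ witness you propose and the ``boundary matrices whose leading perturbative term in $t$ changes sign outside $\{0,1\}$'' strategy for $d\geq4$ are exactly what the paper does (with four explicit one-parameter families covering the $\epsilon_1$ versus $\epsilon_4$ and $\epsilon_2$ versus $\epsilon_3$ dichotomies). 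However, there are two genuine gaps at precisely the hardest points.

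First, the affirmative half of the power classification is not proved. For $\epsilon_2\neq\epsilon_3$ you must show $\epsilon_3\det(A^{\circ\alpha})\geq0$ for \emph{every} $3\times3$ SR$(\epsilon)$ matrix $A$ and every $\alpha\in(0,1]$; you offer ``an explicit determinantal identity or a Bernstein-type integral representation of $x^{\alpha}$,'' but neither is developed, and the Bernstein route (which works for positive semidefiniteness because $\det$ is replaced by a quadratic form and the representation can be integrated against it) does not transfer to a $3\times3$ determinant of a sign-regular, non-symmetric matrix. The paper's actual mechanism is different and is the technical heart of the result: after scaling by diagonal matrices one may assume the first row and column of $A$ are all ones, so that $F(\alpha)=\det A^{\circ\alpha}$ is a six-term exponential polynomial $\sum\pm e^{(\log x)\alpha}$; Descartes' rule of signs bounds its real roots by three, a Taylor expansion gives a double root at $\alpha=0$, and the sign of $F(1)=\det A$ together with the asymptotics as $\alpha\to\infty$ then forces the sign of $F$ on $(0,1]$ (and on $[1,\infty)$ in the $\epsilon_2=\epsilon_3$ case, so no external ``critical exponent'' citation is needed); the SR case follows from the SSR case by Gantmacher--Krein density. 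Without this (or an equivalent) argument, your ``uniformly over the noncompact family'' obstacle is simply unresolved. Second, statement (3)(d) asserts that $c\,\sgn$ preserves $d\times d$ SR$(\epsilon)$ for all $d\geq4$ when $\epsilon_2\neq\epsilon_3$, and your support-pattern check is only carried out at the $3\times3$ level. For $d\geq4$ one must control all $k\times k$ minors of $\sgn[A]$ with $k\geq4$; the paper does this by a careful case analysis of the admissible zero patterns of a TN$_2$ matrix (the north-east/south-west lemma), concluding that $\sgn[A]$ has rank at most $3$ so all larger minors vanish. This nontrivial step is absent from your outline. The remaining ingredients (padding, the $d=2$ functional equation, the zero/nonzero dichotomy for $f$ on $(0,\infty)$, and the explicit exclusion witnesses) are correctly sketched and agree with the paper.
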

	
	This theorem ensures that any entrywise preserver of $m\times n$ SR$(\epsilon)$ matrices with $\epsilon_1=1$ must belong to the specific class of functions identified above. We now present the corresponding result for SSR$(\epsilon)$ matrices.
	
	\begin{utheorem}\label{Theorem_SSR_Fixed_Classification}
		Let $m,n\geq1$ be integers, and let $\epsilon=(\epsilon_1,\ldots,\epsilon_d)$ be a given sign pattern with $\epsilon_1=1$, where $d:=\min\{m,n\}$. Let $f:(0,\infty)\to\mathbb{R}$ be a function. Then the following statements are equivalent.
		\begin{itemize}
			\item[(1)] $f[-]$ preserves the class of $m\times n$ SSR$(\epsilon)$ matrices.
			\item[(2)] $f[-]$ preserves the class of $d\times d$ SSR$(\epsilon)$ matrices.
			\item[(3)]
			\begin{itemize}
				\item[(a)] For $d=1:$ The function $f$ is positive.
				\item[(b)] For $d=2:$ $f(x)=cx^{\alpha}$ for some $\alpha\in(0,\infty)$ and some $c>0$.
				\item[(c)] For $d=3:$ $f(x) = \begin{cases}
					cx^{\alpha} ~ \text{for some} ~ \alpha\in(0,1] ~ \text{and some} ~ c>0, &\text{if} ~ \epsilon_2\neq\epsilon_3, \\
					cx^{\alpha} ~ \text{for some} ~ \alpha\in[1,\infty) ~ \text{and some} ~ c>0, &\text{if} ~ \epsilon_2=\epsilon_3.
				\end{cases}$
				\item[(d)] For $d\geq4:$ $f(x)=cx$ for some $c>0$.
			\end{itemize}
		\end{itemize}
	\end{utheorem}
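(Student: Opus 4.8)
The plan is to prove the cycle $(1)\Rightarrow(2)\Rightarrow(3)\Rightarrow(1)$, with almost all of the content living in the equivalence $(2)\Leftrightarrow(3)$ about $d\times d$ matrices. The passage between $d\times d$ and $m\times n$ rests on two structural facts. First, every submatrix of an SSR matrix obtained by deleting rows and columns is again SSR, and since $d=\min\{m,n\}$ a $d\times d$ submatrix of an $m\times n$ SSR$(\epsilon)$ matrix inherits the \emph{full} sign pattern $\epsilon=(\epsilon_1,\ldots,\epsilon_d)$. Second — and this is an extension lemma I would prove separately, by bordering with columns (or rows) of generalized Vandermonde/Cauchy type so that all new minors acquire prescribed signs — every $d\times d$ SSR$(\epsilon)$ matrix, and more generally every $k\times k$ SSR$(\epsilon_1,\ldots,\epsilon_k)$ matrix, occurs as a submatrix of some $m\times n$ SSR$(\epsilon)$ matrix. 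Granting these, $(1)\Rightarrow(2)$ follows by applying $f[-]$ to an extension of a given $d\times d$ matrix and restricting; and $(2)\Rightarrow(1)$ because for any $m\times n$ SSR$(\epsilon)$ matrix $A$, each $k\times k$ minor of $f[A]$ (with $1\le k\le d$) is a minor of $f[B]$ for a $d\times d$ submatrix $B$, hence has sign $\epsilon_k$, so $f[A]\in\mathrm{SSR}(\epsilon)$.

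For $(2)\Rightarrow(3)$ I would first get the functional form. The case $d=1$ is immediate. For $d\ge2$: since every positive real is an entry of some $d\times d$ SSR$(\epsilon)$ matrix, positivity of $1\times1$ minors forces $f>0$ on $(0,\infty)$. Restricting to $2\times2$ submatrices and using the extension lemma, $f$ preserves $2\times2$ $\mathrm{SSR}(1,\epsilon_2)$ matrices; left-multiplying by $P_2$ swaps the rows and negates the determinant, so we may take $\epsilon_2=1$. Preservation of $2\times2$ TP matrices says exactly that $ad>bc\Rightarrow f(a)f(d)>f(b)f(c)$ for all positive $a,b,c,d$. Passing to $g:=\log f$, this becomes: $u+w>v+z\Rightarrow g(u)+g(w)>g(v)+g(z)$ for all reals. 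Taking $w=z$ shows $g$ is strictly increasing. For the boundary case, if $u+w=v+z$ then perturbing $u$ by $\pm\delta$ and letting $\delta\to0^+$ gives $g(u^-)+g(w)\le g(v)+g(z)\le g(u^+)+g(w)$, so the additive identity $g(u)+g(w)=g(v)+g(z)$ holds whenever one of the four arguments is a continuity point of $g$. Since the non-continuity set is countable, fixing a continuity point $c_0$ yields $g(u)+g(w)=g(c_0)+g(u+w-c_0)$ for \emph{all} $u,w$; hence $\psi(x):=g(x+c_0)-g(c_0)$ is additive and monotone, so $\psi(x)=\alpha x$ with $\alpha>0$, i.e. $f(x)=cx^{\alpha}$ with $c>0$, $\alpha>0$. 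This settles $d=2$.

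Pinning down the exponent for $d\ge3$ is the heart of the argument. Right-multiplication by $P_d$ multiplies each $k\times k$ minor by $(-1)^{\binom{k}{2}}$ and commutes with $f[-]$, so for $d=3$ I may assume $\epsilon=(1,1,1)$ when $\epsilon_2=\epsilon_3$ and $\epsilon=(1,1,-1)$ when $\epsilon_2\ne\epsilon_3$. After scaling rows and columns by positive constants (which commute with Hadamard powers), a $3\times3$ $\mathrm{SSR}(1,1,\epsilon_3)$ matrix takes the form $\left(\begin{smallmatrix}1&1&1\\ 1&a&b\\ 1&c&d\end{smallmatrix}\right)$ subject to explicit inequalities, and one computes $\det\big((\,\cdot\,)^{\circ\alpha}\big)=(a^{\alpha}-1)(d^{\alpha}-1)-(b^{\alpha}-1)(c^{\alpha}-1)=:h(\alpha)$. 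Using the lemma that $t\mapsto\frac{p^{t}-1}{q^{t}-1}$ is strictly monotone for $1<p<q$ (which reduces to $e^{s}-1-s\ge0$), I would analyze the sign of $h(\alpha)$ over the semialgebraic region of admissible $(a,b,c,d)$: one direction shows $cx^{\alpha}$ preserves $3\times3$ $\mathrm{SSR}(1,1,1)$ for all $\alpha\ge1$ and $\mathrm{SSR}(1,1,-1)$ for all $\alpha\in(0,1]$; the other exhibits, for each excluded $\alpha$, a near-boundary test matrix (one whose $3\times3$ minor is close to $0$) on which $h(\alpha)$ has the wrong sign. For $d\ge4$ I would apply the $d=3$ conclusion to the $3\times3$ submatrices (which carry pattern $(\epsilon_1,\epsilon_2,\epsilon_3)$) to confine $\alpha$ to $(0,1]$ or $[1,\infty)$, and then run a second degeneration argument at a $4\times4$ $\mathrm{SSR}(\epsilon|_{\le4})$ matrix $A_0$ on the boundary ($\det A_0=0$, all smaller minors of the correct sign). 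Differentiating, $\frac{d}{d\alpha}\det(A_0^{\circ\alpha})\big|_{\alpha=1}=\operatorname{tr}\!\big(\adj(A_0)\,(A_0\circ\log A_0)\big)$, and since $\adj(A_0)$ has rank one with a checkerboard sign pattern (a consequence of total nonnegativity), a suitably chosen $A_0$ makes this derivative have the sign that rules out $\alpha\ne1$; intersecting with the level-$3$ bound forces $\alpha=1$, i.e. $f(x)=cx$.

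For the converse $(3)\Rightarrow(1)$ it suffices, by the reduction of the first paragraph, to check the $d\times d$ case: scaling by $c>0$ multiplies each $k\times k$ minor by $c^{k}>0$ and preserves all sign patterns; for $d=1$ positivity suffices; for $d=2$, $x^{\alpha}$ ($\alpha>0$) preserves $\sgn(ad-bc)$ because $\sgn\big((ad/bc)^{\alpha}-1\big)=\sgn(ad-bc)$; for $d=3$ the sign analysis of $h(\alpha)$ above gives exactly the stated exponent ranges; and for $d\ge4$ the only surviving map $f(x)=cx$ is visibly a preserver. The hard part will be the two degeneration analyses — identifying the extremal near-boundary test matrices and checking that Hadamard powers move the relevant minor the right way, \emph{uniformly} over the admissible sign-pattern constraints, and in particular handling all four normalized sign-pattern orbits at level $4$ in the rigidity step. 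The extension lemma for arbitrary sign patterns of rectangular matrices is the other ingredient that requires care; the $2\times2$ Cauchy-equation step, once strict monotonicity of $g$ is in hand, is routine.
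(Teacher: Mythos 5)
Your overall architecture matches the paper's: reduce to the $d\times d$ case via submatrices and an extension lemma (the paper imports exactly such a bordering result from \cite{CY-SSR_Construction24} as Theorem~\ref{Theorem_line_insertion_SSR}, so you are right that it is a separate, nontrivial ingredient); force the power-function form from the $2\times2$ case; then classify exponents using degenerate test matrices at levels $3$ and $4$. Your $2\times2$ step is a genuinely different and arguably cleaner route than the paper's: you get additivity of $\log f$ in log-coordinates directly from the strict inequality $u+w>v+z\Rightarrow g(u)+g(w)>g(v)+g(z)$, a one-sided limit at a continuity point, and countability of the discontinuity set of a monotone function, whereas the paper goes through multiplicative mid-convexity, Ostrowski's continuity theorem, and a $\delta$-perturbed pair of rank-one-plus matrices before invoking Darboux. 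That part of your argument checks out.

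The first genuine gap is in the $d=3$ sign analysis. The lemma you cite --- monotonicity of $t\mapsto(p^{t}-1)/(q^{t}-1)$ for $1<p<q$ --- does not by itself determine the sign of $h(\alpha)=(a^{\alpha}-1)(d^{\alpha}-1)-(b^{\alpha}-1)(c^{\alpha}-1)$ on the admissible region: writing $h(\alpha)>0$ as $\frac{a^{\alpha}-1}{b^{\alpha}-1}\cdot\frac{d^{\alpha}-1}{c^{\alpha}-1}>1$, the constraints force $a<b$ and $c<d$, so one factor is decreasing in $\alpha$ and the other increasing, and the product is not controlled by the two-term lemma. You need strictly more: either convexity of $s\mapsto s/(1-e^{-s})$ together with the majorization $\log a+\log d>\log b+\log c$ with $\log a<\log b\le\log c<\log d$ (which does yield that $\alpha\mapsto\log\frac{(a^{\alpha}-1)(d^{\alpha}-1)}{(b^{\alpha}-1)(c^{\alpha}-1)}$ is nondecreasing), or the paper's route via Descartes' rule for exponential polynomials applied to $F(\alpha)=(x_1x_4)^{\alpha}-(x_2x_3)^{\alpha}-x_4^{\alpha}+x_2^{\alpha}+x_3^{\alpha}-x_1^{\alpha}$ (at most three real roots, a double root at $0$, the known sign at $\alpha=1$, and the behavior at $\infty$). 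As written, your sufficiency claim for $d=3$ (and hence also your $(3)\Rightarrow(1)$ for $d=3$) rests on a tool that is too weak.

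The second gap is the rigidity step for $d\ge4$. Differentiating $\det(A_0^{\circ\alpha})$ at $\alpha=1$ at a single singular $A_0$ only shows that $\det(A_0^{\circ\alpha})$ acquires the wrong sign for $\alpha$ in a one-sided punctured neighborhood of $1$; it cannot exclude the entire surviving ray $(1,\infty)$ (resp.\ $(0,1)$) left over from the level-$3$ bound. The paper instead degenerates in a matrix parameter: it builds one-parameter families $A_i(t)$ of singular SR matrices and shows that the leading Taylor coefficient of $\det A_i(t)^{\circ\alpha}$ in $t$ is a fixed polynomial in $\alpha$ (such as $2(\alpha^3-\alpha^4)$) with the wrong sign for \emph{every} $\alpha$ in the excluded ray, choosing $t=t_{\alpha}$ for each $\alpha$. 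Your plan needs this uniformity, which the derivative-at-$1$ computation does not supply. Relatedly, since your level-$4$ witnesses satisfy $\det A_0=0$ they are SR but not SSR, so you must also bridge from SSR-preservation to testing on singular matrices --- the paper does this by noting $\alpha>0$ lets $x^{\alpha}$ extend continuously to $0$ and then invoking Gantmacher--Krein density (Theorem~\ref{Theorem_Gantmacher_Krein}); alternatively one must perturb each witness to a genuine SSR matrix for each fixed $\alpha$. This bridge is absent from your outline.
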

	
	For $\min\{m,n\}\geq4$, the above result shows that the entrywise preservers of $m\times n$ SSR$(\epsilon)$ matrices are linear -- specifically, positive homotheties.
	
	\begin{rem}
		As an application of Theorems~\ref{Theorem_SR_Fixed_Classification}~and~\ref{Theorem_SSR_Fixed_Classification}, we classify entrywise preservers of SR$(\epsilon)$ and SSR$(\epsilon)$ matrices with $\epsilon_1=-1$. Thus, we have a complete characterization of entrywise preservers of SR$(\epsilon)$/SSR$(\epsilon)$ matrices for an arbitrary sign pattern $\epsilon$ (see Corollaries~\ref{Corollary_SR_epsilon_1=-1}~and~\ref{Corollary_SSR_epsilon_1=-1}).
	\end{rem}
	
	Now, rather than considering the set of SR/SSR matrices with a specified sign pattern, one can consider a broader class -- the set of all $m\times n$ SR/SSR matrices for any given $m,n\geq1$. In this setting, the transformed matrix is required to remain SR or SSR, although its sign pattern may differ from that of the original matrix. The following theorem provides a complete characterization of entrywise preservers of all $m\times n$ SR matrices for each fixed $m,n\geq1$.
	
	\begin{utheorem}\label{Theorem_SR_Non-Fixed_Classification}
		Let $m,n\geq1$ be integers and define $d:=\min\{m,n\}$. Suppose $f:\mathbb{R}\to\mathbb{R}$ is a function. Then the following statements are equivalent.
		\begin{itemize}
			\item[(1)] $f[-]$ preserves the set of all $m\times n$ SR matrices.
			\item[(2)] We have the following two cases:
			\begin{itemize}
				\item [(a)] If $m=n$, then we have:
				\begin{itemize}
					\item[(i)] For $n=1:$ $f$ is any real-valued function.
					
					\item[(ii)] For $n=2:$ If $f(0)=0$, then $f|_{(-\infty,0)}$ and $f|_{(0,\infty)}$ are each either non-positive or non-negative functions. Otherwise, $f$ is either a non-positive or a non-negative function on $\mathbb{R}$.
					
					\item[(iii)] For $n=3:$ If $f(0)\neq0$, then $f\equiv f(0)$ on $\mathbb{R}$. If $f(0)=0$, then
					\begin{align*}
						f(x) =
						\begin{cases}
							c_1 |x|^{\alpha_1} ~ \text{for some} ~ \alpha_1\geq0 ~ \text{and some} ~ c_1\in\mathbb{R}, & \text{if } x<0, \\
							c_2 x^{\alpha_2} ~ \text{for some} ~ \alpha_2\geq0 ~ \text{and some} ~ c_2\in\mathbb{R}, & \text{if } x>0.
						\end{cases}
					\end{align*}
					
					\item[(iv)] For $n\geq4:$ Either $f$ is a non-zero constant function on $\mathbb{R}$, or
					\begin{align*}
						f(x)=\begin{cases}
							c_1|x| ~ \text{for some} ~ c_1\in\mathbb{R}, &\text{if} ~ x\leq0,\\
							c_2x ~ \text{for some} ~ c_2\in\mathbb{R}, &\text{if} ~ x\geq0.
						\end{cases}
					\end{align*}
				\end{itemize}
				
				\item [(b)] If $m\neq n$, then we have:
				\begin{itemize}
					\item[(i)] For $d=1:$ same form as in case (a)(ii).
					
					\item[(ii)] For $d=2:$ same form as in case (a)(iii).
					
					\item[(iii)] For $d\geq3:$ same form as in case (a)(iv).
				\end{itemize}
			\end{itemize}	
		\end{itemize}
	\end{utheorem}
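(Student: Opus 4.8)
The plan is to prove the two implications $(1)\Rightarrow(2)$ and $(2)\Rightarrow(1)$ separately, reducing wherever possible to the fixed-sign-pattern classification in Theorem~\ref{Theorem_SR_Fixed_Classification} (together with its $\epsilon_1=-1$ counterpart). Throughout I would use three elementary facts. First, every SR matrix has all entries of one sign, namely the common sign $\epsilon_1$ of its $1\times 1$ minors. Second, for a non-negative matrix $A$ the entrywise power $A^{\circ\alpha}$ preserves the sign of each $2\times 2$ minor, because $(a_{ik}a_{jl})^{\alpha}-(a_{il}a_{jk})^{\alpha}$ has the same sign as $a_{ik}a_{jl}-a_{il}a_{jk}$; and for integer $\alpha$ the Cauchy--Binet formula controls all minors of $A^{\circ\alpha}$. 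Third, SR is invariant under transposition, under global negation (which sends the pattern $(\epsilon_1,\epsilon_2,\epsilon_3,\dots)$ to $(-\epsilon_1,\epsilon_2,-\epsilon_3,\dots)$), and under appending zero rows or columns, while $f[A^{\mathsf{T}}]=f[A]^{\mathsf{T}}$ and $(-f)[A]=-f[A]$; these symmetries let me normalize signs and pass freely between $A$ and its padded/reflected versions.

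\emph{The implication $(2)\Rightarrow(1)$.} If $f\equiv c\neq 0$, then $f[A]$ is a constant matrix, all of whose minors of size $\geq 2$ vanish, hence is SR. If $f$ is single-signed on all of $\mathbb R$, or single-signed on each of $(0,\infty)$ and $(-\infty,0)$ with $f(0)=0$, then by the first fact $f[A]$ has single-signed entries; in the cases $\max\{m,n\}\leq 2$ this already forces $f[A]$ to be SR, since the only remaining minor is a lone $2\times2$ determinant, which is unconstrained. For the power-type forms (the $n=3$ square and $d=2$ rectangular cases), note that on an SR matrix $A$ the function acts as $A\mapsto c_2 A^{\circ\alpha_2}$ if $\epsilon_1=1$ and as $A\mapsto c_1(-A)^{\circ\alpha_1}$ if $\epsilon_1=-1$, i.e.\ as an entrywise power of a non-negative SR matrix; by the second fact the result again has single-signed entries and single-signed $2\times2$ minors, and the at most one surviving top minor (a lone $3\times3$ in the square case, none in the rectangular one) is unconstrained, so it is SR. In the remaining cases only homotheties occur ($\alpha\in\{0,1\}$: constants or $cx$), for which SR-preservation is immediate. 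This disposes of $(2)\Rightarrow(1)$ with only elementary minor computations.

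\emph{The implication $(1)\Rightarrow(2)$, part one: sign analysis.} First, $m=n=1$ is trivial, since every $1\times1$ matrix is SR. Assume (transposing if needed) $n\geq 2$, and feed $f[-]$ the SR matrices whose nonzero entries form a single single-signed column (these are SR, all higher minors being zero) together with the padded diagonal matrices $\diag(a,b)\oplus 0$ with $a,b>0$. This forces $f|_{(0,\infty)}$ and $f|_{(-\infty,0)}$ to be single-signed; and when $\max\{m,n\}\geq 3$ such test matrices carry several $2\times2$ minors that must agree in sign, which forces $f$ to be constant as soon as $f(0)\neq 0$. When $\max\{m,n\}\leq 2$ no $2\times2$-minor constraint survives, and one lands exactly on the stated ``single-signed / split at $0$'' form, completing those cases. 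So henceforth $f(0)=0$ and $f$ is nonconstant; normalizing via global negation and the reflection $x\mapsto -x$, and ruling out interior zeros of $f|_{(0,\infty)}$ by the standard argument that a zero entry in a generic position of an SR matrix forces two $2\times2$ minors of opposite sign, I may assume $f>0$ on $(0,\infty)$, and I set $g:=f|_{(0,\infty)}$ and $h(x):=f(-x)$ for $x>0$.

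\emph{Part two: pinning down $g$ and $h$, and the rectangular shift.} The core step is to show that $g[-]$ (and likewise $h[-]$), restricted to positive-entry SR matrices, forces $g(x)=cx^{\alpha}$. I would produce, for each size, positive SR test matrices whose images under $g[-]$ are pinned to a definite sign pattern — for instance carrying simultaneously a strictly positive and a vanishing $2\times2$ minor in prescribed positions — so that Theorem~\ref{Theorem_SR_Fixed_Classification} becomes applicable to $g[-]$ on that fixed pattern, with the multiplicative Cauchy equation $g(xy)=g(x)g(y)/g(1)$ emerging from the associated $2\times2$-minor identities. The admissible exponents are then exactly as stated: $\alpha\geq 0$ when $\min\{m,n\}=3$ (square) or $2$ (rectangular), because precisely one top minor stays free; and $\alpha\in\{0,1\}$ with $\alpha=0$ excluded unless $f$ is constant when $\min\{m,n\}\geq 4$ (square) or $\geq3$ (rectangular), via the $3\times3$ and $4\times4$ block tests from the proof of Theorem~\ref{Theorem_SR_Fixed_Classification}. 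Since every SR matrix has entries of one sign, $g$ and $h$ never occur together in a single test matrix, so they are constrained independently, giving the independent exponents/coefficients in the $n=3$/$d=2$ cases and the form $f(x)=c_1|x|$ on $x\le 0$, $f(x)=c_2 x$ on $x\ge 0$ in the $n\ge4$/$d\ge3$ cases, to be reconciled with the ``$f$ constant'' branch already obtained. The ``dimension shift'' ($m\neq n$ behaving like one size larger) is handled by noting that an $m\times n$ matrix with $m>n=d$ has $d+1$ maximal minors forced to share the sign $\epsilon_d$; bordering a totally positive $d\times d$ block by one extra row tuned to keep the matrix SR, and tracking this extra constraint through $f[-]$, reproduces the square-case exponent restrictions one size up.

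\emph{Main obstacle.} I expect the crux to be exactly this core step of $(1)\Rightarrow(2)$: because the target sign pattern is allowed to vary, Theorem~\ref{Theorem_SR_Fixed_Classification} cannot be invoked directly, so the real work is to design SR test matrices whose images are forced into a known pattern — making the fixed-pattern machinery available — while simultaneously establishing the sharp dichotomy between the ``one free top minor'' regime (unrestricted $\alpha$, for $d\le 3$ in the square case) and the rigid linear regime ($d\ge 4$), and carrying out the parallel bookkeeping for the rectangular dimension shift.
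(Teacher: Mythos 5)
Your architecture matches the paper's: sign analysis at $0$, a multiplicative functional equation extracted from matrices with at least three columns, then a dimension-dependent classification of the surviving powers, with the rectangular case shifted by one. However, two steps as you describe them would not go through. First, the reduction ``so that Theorem~\ref{Theorem_SR_Fixed_Classification} becomes applicable to $g[-]$ on that fixed pattern'' is not valid: an all-patterns preserver need not preserve SR$(\epsilon)$ for any single fixed $\epsilon$. For instance $x\mapsto x^{1/2}$ preserves all $3\times3$ SR matrices (the lone $3\times3$ minor is free to change sign) but does not preserve SR$(1,1,1)$, so you cannot import the fixed-pattern conclusions wholesale. What you can extract from your test matrices is only the specific identities they impose, and this is what the paper does: it re-derives monotonicity of $f$ on each half-line and the identity $f(\gamma x)f(\gamma y)=f(\gamma\sqrt{xy})^2$ from $2\times3$ matrices whose contiguous $2\times2$ minors are negatives of one another, and then proves continuity on $\mathbb{R}\setminus\{0\}$ via Ostrowski's theorem in four separate sign/monotonicity cases before Darboux can be applied. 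Your plan derives the Cauchy equation but omits monotonicity and continuity entirely; without some regularity the multiplicative equation has non-power solutions, so the jump to ``the admissible exponents are then exactly as stated'' is unsupported.

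Second, and more seriously, the exponent restriction for $n\geq4$ square (and $d\geq3$ rectangular) cannot be obtained ``via the $3\times3$ and $4\times4$ block tests from the proof of Theorem~\ref{Theorem_SR_Fixed_Classification}.'' Those examples only show that a single top minor acquires the wrong sign relative to a prescribed $\epsilon$, which is harmless when the target pattern is allowed to vary. To rule out $\alpha\neq1$ here you must exhibit one SR matrix containing two equal-size submatrices whose minors are driven to strictly opposite signs by $x^{\alpha}$, simultaneously, for every $\alpha\in(0,\infty)\setminus\{1\}$; you correctly identify this criterion but do not construct such matrices, and doing so is the technical core of the paper's Theorems~\ref{Theorem_SR_AllSign_Signum} and~\ref{Theorem_SR_AllSign_Power} (explicit $4\times4$ and $3\times4$ families $A$, $A(t)$ with paired $3\times3$ blocks, analyzed by Descartes' rule of signs and Taylor expansion at $t=0$ or $\alpha=0$). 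The same applies to excluding the signum function for $n\geq4$ square and $d\geq3$ rectangular, which requires its own two-conflicting-minors example. A minor further point: your claim that $f(0)\neq0$ forces $f$ constant ``when $\max\{m,n\}\geq3$'' also needs $\min\{m,n\}\geq2$ (a $1\times n$ matrix has no $2\times2$ minors), consistent with the theorem's case (b)(i).
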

	
	Unlike the case with a fixed sign pattern, the entrywise preservers of the set of all SR matrices of size $m\times n$ need not coincide with those for $d\times d$ matrices, where $d:=\min\{m,n\}$. Our next theorem provides the similar result for SSR matrices.
	
	\begin{utheorem}\label{Theorem_SSR_Non-Fixed_Classification}
		For integers $m,n\geq1$ with $(m,n)\neq(2,2)$, define $d:=\min\{m,n\}$. Let $f:\mathbb{R}\setminus\{0\}\to\mathbb{R}$ be a function. Then the following statements are equivalent.
		\begin{itemize}
			\item[(1)] $f[-]$ preserves the set of all $m\times n$ SSR matrices.
			\item[(2)] We have the following two cases:
			\begin{itemize}
				\item[(a)] If $m=n$, then we have:
				\begin{itemize}
					\item[(i)] For $n=1:$ $f(x)\neq0$ for all $x\in\mathbb{R}\setminus\{0\}$.
					
					\item[(ii)] For $n\geq3:$ $f(x)=\begin{cases}
						c_1|x| ~ \text{for some} ~ c_1\in\mathbb{R}\setminus\{0\}, &\text{if} ~ x<0,\\
						c_2x ~ \text{for some} ~ c_2\in\mathbb{R}\setminus\{0\}, &\text{if} ~ x>0.
					\end{cases}$
				\end{itemize}
				
				\item[(b)] If $m\neq n$, then we have:
				\begin{itemize}
					\item[(i)] For $d=1:$ $f|_{(-\infty,0)}$ and $f|_{(0,\infty)}$ are each either a positive function or a negative function.
					
					\item[(ii)] For $d=2:$ $f(x)=\begin{cases}
						c_1|x|^{\alpha_1} ~ \text{for some} ~ \alpha_1, c_1\in\mathbb{R}\setminus\{0\}, &\text{if} ~ x<0,\\
						c_2x^{\alpha_2} ~ \text{for some} ~ \alpha_2, c_2\in\mathbb{R}\setminus\{0\}, &\text{if} ~ x>0.
					\end{cases}$
					
					\item[(iii)] For $d\geq3:$ same form as in case (a)(ii).
				\end{itemize}
			\end{itemize}
		\end{itemize}
	\end{utheorem}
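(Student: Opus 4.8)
The plan is to prove $(2)\Rightarrow(1)$ and $(1)\Rightarrow(2)$ separately, the former being routine. For $(2)\Rightarrow(1)$ one verifies directly that each listed $f$ sends $m\times n$ SSR matrices to SSR matrices: the linear pieces $f(x)=c_ix$ are trivial once one observes (see the next paragraph) that an SSR matrix has all entries of one sign, so $f[A]$ is a scalar multiple of $A$ or of $-A$; the genuine power functions $c_i|x|^{\alpha_i}$ occur only for $d:=\min\{m,n\}\le 2$, where preservation is checked on $1\times n$, $n\times 1$ and $2\times n$ (or $n\times 2$) configurations using the Cauchy--Binet formula and the fact that a one-signed row/column vector is automatically SSR.

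For $(1)\Rightarrow(2)$ I would first reduce to one-signed matrices. All $1\times1$ minors of an SSR matrix share the sign $\epsilon_1$, so each $m\times n$ SSR matrix is either entrywise positive or entrywise negative; hence the hypothesis on $f$ splits into independent conditions on $g:=f|_{(0,\infty)}$ and $h:=f|_{(-\infty,0)}$. The involution $A\mapsto -A$ interchanges entrywise-positive and entrywise-negative SSR matrices and multiplies every $k\times k$ minor by $(-1)^k$, hence preserves the SSR property; so the condition on $h$ is equivalent to the same condition on $x\mapsto h(-x)$, and it suffices to classify $g:(0,\infty)\to\mathbb{R}$ for which $g[-]$ maps entrywise-positive $m\times n$ SSR matrices into SSR matrices. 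The image is again one-signed, so $g$ is forced to be sign-definite, and after replacing $g$ by $-g$ if necessary (negation preserves SSR) we may assume $g:(0,\infty)\to(0,\infty)$.

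Next I would use two structural facts: every submatrix of an SSR matrix is SSR (its minors form a subfamily of the ambient ones), and every entrywise-positive $k\times k$ SSR matrix embeds as a submatrix of some entrywise-positive $m\times n$ SSR matrix whenever $m,n\ge k$. Together these show that $g[-]$ preserving entrywise-positive $m\times n$ SSR matrices forces $g[-]$ to preserve entrywise-positive $k\times k$ SSR matrices for every $k\le d$, and (when $m\ne n$) also entrywise-positive $2\times 3$ or $3\times 2$ SSR matrices. The case analysis then runs on $d$. For $d=1$ the claim is immediate: a $1\times1$ SSR matrix is any nonzero scalar, whereas a $1\times n$ or $n\times1$ SSR matrix with $n\ge2$ imposes the ``same sign'' restriction — exactly the discrepancy between (a)(i) and (b)(i). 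For $d=2$ with $m\ne n$, realize arbitrary $2\times2$ matrices of prescribed determinant sign as submatrices of entrywise-positive $2\times3$ (or $3\times2$) SSR matrices to deduce that, over all $a,b,c,d>0$, the sign of $g(a)g(d)-g(b)g(c)$ is governed by that of $ad-bc$; writing $u:=\log g\circ\exp$ turns this into Jensen's functional equation together with a strict monotonicity condition, which forces $u$ affine with nonzero slope, i.e.\ $g(x)=cx^{\alpha}$ with $c,\alpha\ne0$, matching (b)(ii). Crucially the exponent is \emph{not} bounded here, since — unlike in Theorem~\ref{Theorem_SSR_Fixed_Classification}(b) — the output sign pattern may differ from the input's; for instance $g(x)=1/x$ flips $\epsilon_2$ while staying SSR. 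For $d\ge3$, I would show that $g[-]$ preserving entrywise-positive $3\times3$ SSR matrices already forces $g(x)=cx$ with $c>0$: entrywise-positive $3\times3$ SSR matrices realize sign patterns with $\epsilon_2=\epsilon_3$ and with $\epsilon_2\ne\epsilon_3$, and degenerating such matrices toward rank-deficient limits yields, on one side, an inequality forcing the relevant exponent to be $\ge1$ and, on the other, one forcing it $\le1$ — in the spirit of, and partly reusing, the proof of Theorem~\ref{Theorem_SSR_Fixed_Classification}(c) — leaving only $g(x)=cx$; for $d\ge4$ this is only easier. Finally I would reassemble $g$ with the mirror of $h$, separate $m=n$ from $m\ne n$, and read off the forms in (a) and (b), the only places they differ being $d=1$ (as above) and $d=2$, where $m=n=2$ is genuinely exceptional and is therefore excluded from the statement.

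I expect the $d\ge3$ step to be the main obstacle. Since $g$ carries no continuity or measurability assumption, one cannot appeal to connectedness of the SSR locus to pin down the output sign pattern and then quote Theorem~\ref{Theorem_SSR_Fixed_Classification}; instead one must exhibit explicit one-parameter families of entrywise-positive SSR matrices on which SSR-ness of the image forces a Cauchy/Jensen-type identity, and moreover arrange these families so as to squeeze the exponent to $1$ from both sides simultaneously. A secondary difficulty is purely bookkeeping: tracking sign patterns faithfully through the reductions (restriction to submatrices, the $A\mapsto -A$ symmetry, and the embedding lemma) so that every passage between ``SSR'' and ``SSR$(\epsilon)$'' statements is rigorous.
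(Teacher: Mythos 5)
Your overall architecture (split into $f|_{(0,\infty)}$ and $f|_{(-\infty,0)}$ via one-signedness of SSR matrices, handle $d=1$ directly, derive a power-function form for $d\geq2$ from a multiplicative functional equation plus monotonicity, then classify the admissible exponents) matches the paper's, and the $d\leq2$ cases are essentially right --- though in the $d=2$ step you should be explicit that the exact identity $g(x)g(y)=g(1)g(xy)$ is not immediate from sign conditions on $2\times2$ minors: one needs a squeeze with perturbed $2\times3$ matrices together with a point of continuity (strict monotonicity gives at most countably many discontinuities) before Darboux/Ostrowski applies, which is exactly how Theorem~\ref{Theorem_SSR_all_sign_f_form_power} proceeds.

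The genuine gap is in your $d\geq3$ step. Once $g(x)=cx^{\alpha}$ is known, you propose to force $\alpha=1$ by ``reusing'' the fixed-sign-pattern classification (Theorem~\ref{Theorem_SSR_Fixed_Classification}(c)) and degenerating toward singular limits to obtain inequalities pinning $\alpha\geq1$ and $\alpha\leq1$. This does not work: the fixed-pattern counterexamples only show that $\epsilon_3\det A^{\circ\alpha}$ acquires the \emph{wrong sign} relative to the input pattern, and in the all-sign-patterns problem a sign flip of the $3\times3$ minor is perfectly legal --- the image is then simply SSR with a different $\epsilon_3$. The $2\times2$ minors constrain $\epsilon_1',\epsilon_2'$ of the image, but $\epsilon_3'$ is unconstrained, so no ``inequality'' on $\det A^{\circ\alpha}$ can yield a contradiction; the only way to break SSR-ness is to exhibit, for each $\alpha\neq1$, a positive $3\times3$ SSR matrix $A$ with $\det A^{\circ\alpha}=0$ \emph{exactly}. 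You flag this difficulty in your closing paragraph but do not supply a mechanism to overcome it. The paper's Theorem~\ref{Theorem_SSR_all_sign_power_classification} does so by building explicit one- and two-parameter families $A_i(t,\delta)$ of SSR matrices, computing Taylor expansions of $\det A_i^{\circ\alpha}$ at the degenerate endpoints to show the determinant of the image takes both signs over the SSR range of the parameters, and invoking the intermediate value theorem to locate an exact zero; this construction (done separately for $\alpha<0$, $\alpha\in(0,1)$, and $\alpha>1$) is the technical core of the proof and is the piece missing from your proposal.
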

	
	\begin{rem}
		Note that entrywise preservers of all $2\times2$ SSR matrices are not covered by the above theorem. This case is addressed separately in Lemma~\ref{Lemma_2by2_SSR}.
	\end{rem}
	
	We conclude this discussion of our main results with two remarks. First, SSR matrices are harder to analyze than TP matrices, which have been the focus of tremendous study in the literature. This paper is an attempt to better understand the more challenging picture of preservers of SSR matrices. Second, it is striking that the solutions to this more challenging preserver problem have a structured and precise form as in the above results.
	
	\subsection*{Organization of the paper}
	
	The remainder of the paper is dedicated to proving our main results. We begin in Section~\ref{Section_Fixed_Sign_pattern} by recording some preliminary observations. We then establish several foundational results, which are later combined to prove Theorems~\ref{Theorem_SR_Fixed_Classification}~and~\ref{Theorem_SSR_Fixed_Classification}. These theorems, together with Corollaries~\ref{Corollary_SR_epsilon_1=-1}~and~\ref{Corollary_SSR_epsilon_1=-1}, provide a complete characterization of functions that entrywise preserve SR$(\epsilon)$ and SSR$(\epsilon)$ matrices for any given sign pattern $\epsilon$ in each fixed dimension. In the final section, we prove Theorems~\ref{Theorem_SR_Non-Fixed_Classification}~and~\ref{Theorem_SSR_Non-Fixed_Classification}, which classify entrywise preservers of (strict) sign regularity allowing all sign patterns.
	
	\section{Entrywise preservers of sign regularity for a fixed sign pattern}\label{Section_Fixed_Sign_pattern}
	
	The aim of this section is to classify all real-valued functions $f$ such that $f[-]$ preserves the set of $m\times n$ SR$(\epsilon)$/SSR$(\epsilon)$ matrices, for any given integers $m,n\geq1$ and a sign pattern $\epsilon$. We begin by considering the case where all entries are positive. In this setting, we first identify the entrywise power functions that preserve (strict) sign regularity for a given sign pattern, and then characterize all entrywise transforms preserving them. Finally, using these results, we classify entrywise preservers of SR$(\epsilon)$/SSR$(\epsilon)$ matrices with $\epsilon_1=-1$. We first record some observations that follow from the Cauchy--Binet formula.
	
	\begin{rem}\label{Remark_SSR_3}
		For any $\alpha\in\mathbb{R}$ and an $n\times n$ diagonal matrix $D=\diag(d_1,\ldots,d_n)$ with $d_i>0$ for all $i$, define $\widehat{D}^{\circ\alpha}:= \diag(d_1^{\alpha},\ldots,d_n^{\alpha})$.
		\begin{itemize}
			\item[(i)] \textit{Let $A$ be an $m\times n$ matrix with positive entries, and let $E=\diag(e_1,\ldots,e_m)\in\mathbb{R}^{m\times m}$ and $F=\diag(f_1,\ldots,f_n)\in\mathbb{R}^{n\times n}$ be diagonal matrices with all $e_i,f_j>0$. Then $A$ is SSR$(\epsilon)$ if and only if $EAF$ is SSR$(\epsilon)$.} This implies that for any $\alpha\in\mathbb{R}$, the matrix $A^{\circ\alpha}$ is SSR$(\epsilon)$ if and only if $\widehat{E}^{\circ\alpha}A^{\circ\alpha}\widehat{F}^{\circ\alpha}$ is SSR$(\epsilon)$. Since $(EAF)^{\circ\alpha} = \widehat{E}^{\circ\alpha}A^{\circ\alpha}\widehat{F}^{\circ\alpha}$, it suffices to determine the values of $\alpha$ for which $(EAF)^{\circ\alpha}$ is SSR$(\epsilon)$ in order to conclude the same for $A^{\circ\alpha}$.
			
			\item[(ii)] \textit{Let $A\in\mathbb{R}^{m\times n}$ and set $d:=\min\{m,n\}$. Then $A$ is SSR$(\epsilon)$ if and only if $AP_n$ is SSR$(\epsilon^{\prime})$, where $\epsilon_i^{\prime} = (-1)^{\left\lfloor \frac{i}{2}\right\rfloor}\epsilon_i$ for $i=1,\ldots,d$}. This follows from the identity $\det P_n = (-1)^{\left\lfloor \frac{n}{2}\right\rfloor}$. 
			
			\item[(iii)] Let $f$ be a real-valued function, and let $A$ be an $m\times n$ SSR$(\epsilon)$ matrix whose entries lie in the domain of $f$. Note that $f[A]P_n=f[AP_n]$. By the preceding point, the matrix $f[A]$ is SSR$(\epsilon)$ if and only if $f[AP_n]$ is SSR$(\epsilon^{\prime})$, where $\epsilon_i^{\prime}=(-1)^{\left\lfloor \frac{i}{2}\right\rfloor}\epsilon_i$ for $i=1,\ldots,\min\{m,n\}$. This implies that the entrywise preservers corresponding to the sign patterns $\epsilon$ and $\epsilon^{\prime}$ are identical. Consequently, this result reduces the number of sign patterns that need to be examined when characterizing entrywise preservers of SSR$(\epsilon)$ matrices.
		\end{itemize}
		These observations remain valid if we replace SSR$(\epsilon)$ with SR$(\epsilon)$, for all $\alpha\geq0$.
	\end{rem}
	
	We next recall Descartes' rule of signs for exponential polynomials, which gives a bound on the number of their real zeros in terms of the sign changes in their coefficients.
		
	\begin{theorem}\cite[Descartes' rule of signs]{Jameson}\label{Jameson}
		Let $c_j\in\mathbb{R}$, not all zero, and let $\alpha_1>\cdots>\alpha_n$ be real numbers. Define $F:\mathbb{R}\to\mathbb{R}$ by
		\begin{align*}%\label{Generalized_Dirichlet_Polynomial}
			F(t) = \displaystyle\sum_{j=1}^{n}c_je^{\alpha_jt}.
		\end{align*}
		Then the number of real zeros of $F$, counted with their multiplicities, is at most the number of sign changes in the sequence $(c_1,\ldots,c_n)$ after removing all zero entries.
	\end{theorem}
	
	\subsection{Entrywise powers preserving SSR$(\epsilon)$ and SR$(\epsilon)$ matrices}
	
	Using Remark~\ref{Remark_SSR_3} and Theorem~\ref{Jameson}, we now determine which entrywise power functions preserve $3\times3$ SSR$(\epsilon)$ matrices.
	
	\begin{theorem} \label{Theorem_SSR_3_powers}
		Let $A$ be a $3\times3$ SSR$(\epsilon)$ matrix with sign pattern $\epsilon=(\epsilon_1,\epsilon_2,\epsilon_3)$ and $\epsilon_1=1$. Then $A^{\circ\alpha}$ remains SSR$(\epsilon)$ for all $\alpha\in(0,1]$ if $\epsilon_2\neq\epsilon_3$, and for all $\alpha\in[1,\infty)$ if $\epsilon_2=\epsilon_3$.
	\end{theorem}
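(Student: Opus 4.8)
The plan is to reduce the statement, via Remark~\ref{Remark_SSR_3}, to determining the sign of a single exponential sum in the variable $\alpha$. First, Remark~\ref{Remark_SSR_3}(iii) with $n=3$ shows that right multiplication by $P_3$ sends the sign pattern $(\epsilon_1,\epsilon_2,\epsilon_3)$ to $(\epsilon_1,-\epsilon_2,-\epsilon_3)$ and commutes with the entrywise power map; since this flip preserves the dichotomy ``$\epsilon_2=\epsilon_3$'' versus ``$\epsilon_2\neq\epsilon_3$'', it suffices to treat $\epsilon_2=1$. Next, by Remark~\ref{Remark_SSR_3}(i) we may rescale rows and columns of $A$ by positive diagonal matrices (an operation which also commutes, up to positive diagonal factors, with $A\mapsto A^{\circ\alpha}$) and hence assume
\[
A=\begin{pmatrix}1&1&1\\ 1&p&q\\ 1&r&s\end{pmatrix},
\]
in which case positivity of the nine $2\times2$ minors of $A$ -- which is exactly the hypothesis $\epsilon_2=1$ -- is equivalent to $1<p<q,r<s$ together with $ps>qr$.

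Since $x\mapsto x^{\alpha}$ is strictly increasing on $(0,\infty)$ for every $\alpha>0$, each $2\times2$ minor of
\[
A^{\circ\alpha}=\begin{pmatrix}1&1&1\\ 1&p^{\alpha}&q^{\alpha}\\ 1&r^{\alpha}&s^{\alpha}\end{pmatrix}
\]
has the same sign as the corresponding minor of $A$, hence is positive; so $A^{\circ\alpha}$ is SSR$(\epsilon)$ if and only if $g(\alpha):=\det A^{\circ\alpha}$ has sign $\epsilon_3$, where
\[
g(\alpha)=(ps)^{\alpha}+q^{\alpha}+r^{\alpha}-(qr)^{\alpha}-p^{\alpha}-s^{\alpha}.
\]
I would then record three properties of this exponential sum. (a) All six bases exceed $1$; among them $ps$ is the strict maximum (since $ps>qr>\max\{q,r\}$ and $ps>s$) and $p$ is the strict minimum, so $g(\alpha)\to+\infty$ as $\alpha\to+\infty$ and $g(\alpha)\to0^{-}$ as $\alpha\to-\infty$. (b) Listing the terms of $g$ in decreasing order of their bases (merging any equal ones), the coefficient sign sequence undergoes at most three sign changes -- a short case check according to whether $qr$ is less than, equal to, or greater than $s$. (c) $g(0)=0$ and $g'(0)=0$; the latter is a direct computation, since $\log(ps)+\log q+\log r-\log(qr)-\log p-\log s=0$, or follows because differentiating $\det A^{\circ\alpha}$ at $\alpha=0$ produces a sum of $3\times3$ determinants each having two equal rows.

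By the classical estimate for the number of real zeros of an exponential sum (a generalized Descartes' rule of signs), (b) bounds the number of real zeros of $g$, counted with multiplicity, by $3$; by (c) the origin is already a zero of multiplicity at least $2$, so $g$ has at most one further, necessarily simple, zero. Together with the asymptotics in (a) this forces one of two scenarios: either $g>0$ on all of $(0,\infty)$, or there is a unique $\alpha_0>0$ with $g<0$ on $(0,\alpha_0)$ and $g>0$ on $(\alpha_0,\infty)$. Since $g(1)=\det A\neq0$ has sign $\epsilon_3$, in the first scenario $\epsilon_3=1=\epsilon_2$ and $g>0$ on $[1,\infty)$; in the second $\alpha_0\neq1$, and either $\alpha_0>1$ (so $\epsilon_3=-1\neq\epsilon_2$ and $g<0$ on $(0,1]$) or $\alpha_0<1$ (so $\epsilon_3=1=\epsilon_2$ and $g>0$ on $[1,\infty)$). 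In each case $A^{\circ\alpha}$ retains the sign pattern $\epsilon$ on precisely the claimed range of $\alpha$. The main obstacle is steps (b)--(c): obtaining the sharp zero count and, crucially, noticing that the forced double zero of $g$ at the origin is exactly what confines $g$ to at most one sign change on $(0,\infty)$.
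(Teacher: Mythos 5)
Your proposal is correct and follows essentially the same route as the paper's proof: normalize $A$ by positive diagonal scaling to the bordered-by-ones form, reduce to $\epsilon_2=1$ via $P_3$, and control $\det A^{\circ\alpha}$ as an exponential sum having at most three real zeros (generalized Descartes), a double zero at $\alpha=0$, limit $+\infty$ as $\alpha\to\infty$, and value $\det A$ at $\alpha=1$. The only difference is presentational -- you deduce the global sign profile of $g$ on $(0,\infty)$ directly and read off both cases, whereas the paper argues by contradiction within each case -- but the decomposition and key lemmas are identical.
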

	\begin{proof}
		Let $A\in\mathbb{R}^{3\times3}$ be an SSR$(\epsilon)$ matrix with $\epsilon_1=1$, and define $f(x):=x^{\alpha}$ for all $x>0$, with $\alpha\in\mathbb{R}$. Note that applying $f$ entrywise preserves the sign of all $1\times1$ minors of $A$ for every $\alpha\in\mathbb{R}$, and preserves the sign of all $2\times2$ minors if and only if $\alpha>0$. To complete the proof, it suffices to classify $\alpha\in(0,\infty)$ such that $f[-]$ preserves the sign of the determinant of $A$.
		
		Let $A:=(a_{ij})$, and define the diagonal matrices $E$ and $F$ as follows:
		\begin{align*}
			E:=\begin{pmatrix}
				\frac{1}{a_{11}} & 0 & 0 \\
				0 & \frac{1}{a_{21}} & 0 \\
				0 & 0 & \frac{1}{a_{31}}
			\end{pmatrix}, ~ F:=\begin{pmatrix}
				1 & 0 & 0 \\
				0 & \frac{a_{11}}{a_{12}} & 0 \\
				0 & 0 & \frac{a_{11}}{a_{13}}
			\end{pmatrix}. ~ \text{Then} ~ EAF = \begin{pmatrix}
				1 & 1 & 1 \\
				1 & \frac{a_{11}a_{22}}{a_{12}a_{21}} & \frac{a_{11}a_{23}}{a_{21}a_{13}} \\[5pt]
				1 & \frac{a_{11}a_{32}}{a_{12}a_{31}} & \frac{a_{11}a_{33}}{a_{13}a_{31}}
			\end{pmatrix}. 
		\end{align*}
		By Remark~\ref{Remark_SSR_3}(i), without loss of generality we may assume
		\begin{equation*}%\label{A_3_form}
			A:=\begin{pmatrix}
				1 & 1 & 1 \\
				1 & x_1 & x_2 \\
				1 & x_3 & x_4
			\end{pmatrix}, ~ \text{where} ~ x_i>0 ~\text{for all} ~ i.
		\end{equation*}
		The matrix $A$ is SSR$(\epsilon)$ if and only if the following conditions hold: (a)~$\epsilon_2(x_i-1)>0$ for all $i$, (b)~$\epsilon_2(x_1x_4-x_2x_3)>0$, (c)~$\epsilon_2(x_j-x_i)>0$ for all $i<j$, except when $(i,j)=(2,3)$, and (d)~$\epsilon_3\det A>0$. For $\alpha\in\mathbb{R}$, define the function
		\begin{align}
			F(\alpha):=\det A^{\circ\alpha} & =(x_1x_4)^\alpha-(x_2x_3)^\alpha-x_4^\alpha+x_2^\alpha+x_3^\alpha-x_1^\alpha  \label{DirichletPolynomial}\\
			& = e^{(\log x_1x_4)\alpha}-e^{(\log x_2x_3)\alpha}-e^{(\log x_4)\alpha}+e^{(\log x_2)\alpha}+e^{(\log x_3)\alpha}-e^{(\log x_1)\alpha}. \label{DirichletPolynomial_exp}
		\end{align}
		
		Note that $F$ is continuous on $\mathbb{R}$. For $\epsilon_2=1$, we have the following properties of $F$:
		\begin{itemize}
			\item[(i)] From \eqref{DirichletPolynomial}, we have $\displaystyle{\lim_{\alpha\to\infty}}F(\alpha)=\infty$ as $x_2,x_3,x_4>x_1>1$ and $x_1x_4>x_2x_3.$
			
			\item[(ii)] $F(1)=\det A$, so $\text{sign}\left(F(1)\right)=\epsilon_3$.
			
			\item[(iii)] The number of sign changes in the coefficients of $F$ in \eqref{DirichletPolynomial_exp} remains invariant under the transposition of the terms $-e^{(\log x_2x_3)\alpha}$ and $-e^{(\log x_4)\alpha}$, as well as $e^{(\log x_2)\alpha}$ and $e^{(\log x_3)\alpha}$. Hence, by Theorem~\ref{Jameson}, $F$ has at most three real roots.
			
			\item[(iv)] One can carry out a Taylor expansion of $F$ at $\alpha=0$ and show that
			\[F(\alpha)= \left(\log x_1\log x_4-\log x_2\log x_3\right)\alpha^2 + O(\alpha^3).\]
			It follows that $F$ has a root of multiplicity at least two at $\alpha=0$.
		\end{itemize}
		Using the above properties of $F$ valid for $\epsilon_2=1$ and $\epsilon_3\in\{\pm1\}$, we now complete the proof by considering two cases.
		
		\begin{itemize}[leftmargin=0pt,label={}]
			\item \textbf{Case 1.} $\epsilon_2\neq\epsilon_3$: We claim that $\epsilon_3\det A^{\circ\alpha}>0$ for all $0<\alpha\leq1$. We begin by considering the case $\epsilon_1=\epsilon_2$, i.e., $\epsilon_1=\epsilon_2=1$ and $\epsilon_3=-1$. Assume for contradiction that $\epsilon_3F(\alpha_0)<0$ for some $\alpha_0\in(0,1)$. The continuity of $F$ and $\epsilon_3F(1)>0$ imply that $F$ has at least one root in $(\alpha_0,1)$. Furthermore, the behavior $\epsilon_3F(\alpha)\to-\infty$ as $\alpha\to\infty$ implies another root of $F$ in $(1,\infty)$. With two roots at zero already, this gives at least four real roots of $F$, contradicting Theorem~\ref{Jameson}. Thus, $\epsilon_3F(\alpha)\geq0$ for all $\alpha\in(0,1)$. Moreover, if $\epsilon_3F(\alpha_0)=0$, then $\alpha_0$ is a global maximum point of $F$ in $(0,1)$, so $F^{\prime}(\alpha_0)=0$, indicating a double root at $\alpha_0$ and again violating Theorem~\ref{Jameson}.
			
			Next, assume that $A$ is SSR$(\epsilon)$ with $\epsilon_1\neq\epsilon_2$. Then the matrix $AP_3$ becomes SSR$(\epsilon^{\prime})$, with $\epsilon_1^{\prime}=\epsilon_2^{\prime}$ and $\epsilon_2^{\prime}\neq\epsilon_3^{\prime}$. By applying Remark~\ref{Remark_SSR_3}(iii) and using the above case, it follows that $A^{\circ\alpha}$ is SSR$(\epsilon)$ for all $\alpha\in(0,1]$. Thus, $\epsilon_3\det A^{\circ\alpha}>0$ for all $0<\alpha\leq1$.
			
			\item \textbf{Case 2.} $\epsilon_2=\epsilon_3$: We show that $\epsilon_3\det A^{\circ\alpha}>0$ for all $\alpha\geq1$. We begin with the case $\epsilon_1=\epsilon_2$. One can adapt the argument used in the first half of the above case, i.e., assume the contrary and then use the properties of $F$ to deduce that $F$ has (at least) four real roots, which is false. 
			
			Now consider the case $\epsilon_1\neq\epsilon_2$. As before, we consider the matrix $AP_3$ to reduce to the case where $\epsilon_1^{\prime}=\epsilon_2^{\prime}=\epsilon_3^{\prime}$, and use Remark~\ref{Remark_SSR_3}(iii) and the above result to conclude that $A^{\circ\alpha}$ remains SSR$(\epsilon)$ for all $\alpha\geq1$. This completes the proof. \qedhere
		\end{itemize}
	\end{proof}
	
	To establish a similar result for $3\times3$ SR$(\epsilon)$ matrices, we require the following classical density result of Gantmacher and Krein.
	
	\begin{theorem}\cite[Gantmacher--Krein]{GK50}\label{Theorem_Gantmacher_Krein}
		Let $m,n\geq1$ be integers and $\epsilon=(\epsilon_1,\ldots,\epsilon_{\min\{m,n\}})$ be a given sign pattern. Then the set of $m\times n$ SSR$(\epsilon)$ matrices is dense in the set of $m\times n$ SR$(\epsilon)$ matrices.
	\end{theorem}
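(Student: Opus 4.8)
\emph{Plan.} Every SSR$(\epsilon)$ matrix is SR$(\epsilon)$, so it suffices to show that each $m\times n$ SR$(\epsilon)$ matrix $A$ is a limit of $m\times n$ SSR$(\epsilon)$ matrices. The engine is the Cauchy--Binet formula applied to $A$ sandwiched between totally positive ``mollifier'' matrices that approximate the identity.

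\emph{Step 1 (the full-rank case).} For $\lambda>0$ and $k\geq1$ put $G_k^{(\lambda)}:=(e^{-\lambda(i-j)^2})_{i,j=1}^{k}$. From the factorization $e^{-\lambda(i-j)^2}=e^{-\lambda i^2}\cdot e^{2\lambda ij}\cdot e^{-\lambda j^2}$, together with the classical fact that a generalized Vandermonde matrix $(\mu_i^{\nu_j})$ with $0<\mu_1<\cdots$ and $\nu_1<\cdots$ is totally positive, we see that $G_k^{(\lambda)}$ is totally positive for every $\lambda>0$ and that $G_k^{(\lambda)}\to I_k$ as $\lambda\to\infty$. Set $A^{(\lambda)}:=G_m^{(\lambda)}AG_n^{(\lambda)}$, so that $A^{(\lambda)}\to A$. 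By the Cauchy--Binet formula applied twice, for any $k$ and any row/column index sets $I,J$ with $|I|=|J|=k$ one has
\[
\det A^{(\lambda)}[I,J]=\sum_{|R|=|S|=k}\det G_m^{(\lambda)}[I,R]\;\det A[R,S]\;\det G_n^{(\lambda)}[S,J],
\]
where every factor $\det G_m^{(\lambda)}[I,R]$ and $\det G_n^{(\lambda)}[S,J]$ is positive, while each $\det A[R,S]$ is $0$ or of sign $\epsilon_k$. Hence $\det A^{(\lambda)}[I,J]$ equals $0$ if all $k\times k$ minors of $A$ vanish and is strictly of sign $\epsilon_k$ otherwise. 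In particular, if $\rank A=d:=\min\{m,n\}$, then $A^{(\lambda)}$ is SSR$(\epsilon)$ for every $\lambda$, and the theorem follows in this case.

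\emph{Step 2 (the main obstacle: rank deficiency).} If $r:=\rank A<d$, then every minor of $A$ of size larger than $r$ vanishes, so the matrix $A^{(\lambda)}$ from Step~1 is never SSR$(\epsilon)$; one must first show that $A$ lies in the closure of the \emph{full-rank} SR$(\epsilon)$ matrices. Using Step~1 we may assume that already every $k\times k$ minor of $A$ with $k\leq r$ is strictly of sign $\epsilon_k$, and the task becomes to produce, for all small $t>0$, a rank-$d$ perturbation $A+tB$ which is still SR$(\epsilon)$. Expanding a size-$k$ minor of $A+tB$ multilinearly in its columns, the coefficient of $t^{j}$ with $j<k-r$ is a determinant having more than $r$ columns drawn from the rank-$r$ matrix $A$ and hence vanishes; thus $\det(A+tB)[I,J]=t^{\,k-r}c_{I,J}+O(t^{\,k-r+1})$, where $c_{I,J}$ is a signed sum of ``mixed'' $k\times k$ determinants built from $r$ columns of $A$ and $k-r$ columns of $B$. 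The crux is to choose a single $B$ for which all these leading coefficients $c_{I,J}$ (over $r<k\leq d$) are strictly of sign $\epsilon_k$: this simultaneously guarantees that $A+tB$ is SR$(\epsilon)$ for small $t$ (finitely many minor conditions) and that it has rank $d$. Constructing such a $B$ and verifying these sign conditions is the delicate core of the argument, and for this we follow the classical treatment of Gantmacher--Krein~\cite{GK50} (see also~\cite{fallat-john} for related structure theory in the totally non-negative case). Once density of the full-rank SR$(\epsilon)$ matrices is established, applying Step~1 to those matrices completes the proof.
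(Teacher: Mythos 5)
The paper offers no proof of this statement: it is quoted as a classical result with a citation to \cite{GK50}, so there is no in-paper argument to measure you against. Judged on its own, your Step~1 is correct and standard. The Gaussian matrices $G_k^{(\lambda)}$ are totally positive, tend to $I_k$, and the double Cauchy--Binet expansion shows that every $k\times k$ minor of $G_m^{(\lambda)}AG_n^{(\lambda)}$ is a positive linear combination of the $k\times k$ minors of $A$, hence vanishes if all of those vanish and is strictly of sign $\epsilon_k$ otherwise. This settles the case $\rank A=\min\{m,n\}$ and correctly reduces the theorem to: every SR$(\epsilon)$ matrix of rank $r<d$ whose minors of order $\leq r$ are strictly signed (and whose larger minors all vanish) is a limit of SSR$(\epsilon)$ matrices.

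The problem is that this reduced statement --- which you yourself call ``the delicate core of the argument'' --- is never proved. The multilinear expansion $\det(A+tB)[I,J]=t^{\,k-r}c_{I,J}+O(t^{\,k-r+1})$ is set up correctly, but the existence of a single $B$ making every leading coefficient $c_{I,J}$ strictly of sign $\epsilon_k$ is exactly the nontrivial content of the theorem in the rank-deficient case, and you dispose of it by ``following the classical treatment of Gantmacher--Krein.'' Since \cite{GK50} is the very source to which the theorem is attributed, this amounts to citing the result inside its own proof. Nor is the missing step mere bookkeeping: a fixed totally positive perturbation does not work, so $B$ must be built adaptively from $A$. For instance, with the rank-one totally non-negative matrix $A=\left(\begin{smallmatrix}1&\delta\\10&10\delta\end{smallmatrix}\right)$ (positive entries, so already in your reduced normal form) and the totally positive matrix $B=\left(\begin{smallmatrix}2&1\\1&1\end{smallmatrix}\right)$, one computes $\det(A+tB)=(-9+19\delta)\,t+t^{2}$, which is negative for all small $t,\delta>0$; hence $A+tB$ leaves the SR$(1,1)$ class. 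Until you exhibit a concrete $B$ (or an alternative device, e.g.\ a Whitney-type reduction or an explicit parametrization of rank-$r$ SR$(\epsilon)$ matrices) and verify the sign conditions on the $c_{I,J}$, the proposal establishes only the full-rank case.
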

	
	Using Theorems~\ref{Theorem_SSR_3_powers}~and~\ref{Theorem_Gantmacher_Krein}, we now determine the entrywise powers that preserve $3\times3$ SR$(\epsilon)$ matrices.
	
	\begin{cor}\label{Corollary_SR_3_powers}
		Let $A$ be a $3\times3$ SR$(\epsilon)$ matrix with $\epsilon_1=1$. Then $A^{\circ\alpha}$ is SR$(\epsilon)$ for all $\alpha\in[0,1]$ if $\epsilon_2\neq\epsilon_3$, and for all $\alpha\in\{0\}\cup[1,\infty)$ if $\epsilon_2=\epsilon_3$.
	\end{cor}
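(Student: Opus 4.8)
The plan is to deduce Corollary~\ref{Corollary_SR_3_powers} directly from Theorem~\ref{Theorem_SSR_3_powers} via the density result of Theorem~\ref{Theorem_Gantmacher_Krein}, together with a routine continuity/limiting argument. Concretely, fix $\alpha$ in the claimed range ($[0,1]$ if $\epsilon_2\neq\epsilon_3$, and $\{0\}\cup[1,\infty)$ if $\epsilon_2=\epsilon_3$) and let $A$ be a $3\times3$ SR$(\epsilon)$ matrix with $\epsilon_1=1$. The case $\alpha=0$ is immediate: $A^{\circ0}$ is the all-ones matrix, which is SR$(\epsilon)$ with $\epsilon_1=1$ (and its $2\times2$ and $3\times3$ minors all vanish, consistent with sign regularity). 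So assume $\alpha>0$.

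For $\alpha>0$, first note that $A$ has positive entries: indeed $\epsilon_1=1$ forces every entry $a_{ij}\geq0$, and if some entry were $0$ then $A^{\circ\alpha}$ still has that entry $0$, so one checks the minor conditions pass through trivially --- but cleaner is to reduce to the positive-entry case by a perturbation. By Theorem~\ref{Theorem_Gantmacher_Krein}, choose a sequence $(A_k)$ of $3\times3$ SSR$(\epsilon)$ matrices with $A_k\to A$ entrywise; since SSR$(\epsilon)$ with $\epsilon_1=1$ forces strictly positive entries, each $A_k$ has positive entries, and the entrywise map $x\mapsto x^{\alpha}$ is continuous on $[0,\infty)$ (using $0^0:=1$ is not even needed here since $\alpha>0$). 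By Theorem~\ref{Theorem_SSR_3_powers}, each $A_k^{\circ\alpha}$ is SSR$(\epsilon)$, hence in particular every $k\times k$ minor of $A_k^{\circ\alpha}$ has sign exactly $\epsilon_k$ (strictly). Passing to the limit, $A_k^{\circ\alpha}\to A^{\circ\alpha}$ entrywise, and each minor is a polynomial in the entries, so every $k\times k$ minor of $A^{\circ\alpha}$ has sign $\epsilon_k$ or is zero --- that is, $A^{\circ\alpha}$ is SR$(\epsilon)$.

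The one point requiring a touch of care is that the limiting argument only yields ``sign $\epsilon_k$ or zero,'' which is exactly the definition of SR$(\epsilon)$, so nothing more is needed; there is no obstacle in showing the minors do not acquire the \emph{wrong} sign. The main (very mild) obstacle is bookkeeping around $\alpha=0$ and the boundary point $\alpha=1$: at $\alpha=1$ the claim is just that $A$ itself is SR$(\epsilon)$, which is the hypothesis, and for $\alpha=0$ the all-ones matrix argument above suffices, so the density argument is only needed for $\alpha$ in the open part of the range. I would also remark that the SR$(\epsilon)$ analogue of Remark~\ref{Remark_SSR_3} (valid for all $\alpha\geq0$, as stated at the end of that remark) is implicitly what makes the reduction in Theorem~\ref{Theorem_SSR_3_powers} transfer, but since we are invoking Theorem~\ref{Theorem_SSR_3_powers} as a black box and only using density, no new reduction is needed here. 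Thus the proof is essentially: \emph{``SSR$(\epsilon)$ powers $+$ density $+$ continuity of minors $\Rightarrow$ SR$(\epsilon)$ powers,''} handled in a few lines.
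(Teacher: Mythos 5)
Your proposal is correct and follows essentially the same route as the paper: approximate $A$ by SSR$(\epsilon)$ matrices via the Gantmacher--Krein density theorem, apply Theorem~\ref{Theorem_SSR_3_powers} to each approximant, and pass to the limit using continuity of the entrywise power map and of the minors, handling $\alpha=0$ separately via the all-ones matrix. The extra remarks about positivity of entries and the boundary cases are harmless elaborations of the same argument.
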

	\begin{proof}
		Let $A$ be a $3\times3$ SR$(\epsilon)$ matrix with $\epsilon_1=1$. By the above result of Gantmacher--Krein, there exists a sequence of $3\times3$ SSR$(\epsilon)$ matrices $A_k$ such that $\displaystyle{\lim_{k\to\infty}}A_k=A$. Moreover, as $k\to\infty$, $A_k^{\circ\alpha}\to A^{\circ\alpha}$ entrywise for all $\alpha\geq0$. From Theorem~\ref{Theorem_SSR_3_powers}, each $A_k^{\circ\alpha}$ is SSR$(\epsilon)$ for all $\alpha\in(0,1]$ if $\epsilon_2\neq\epsilon_3$, and for all $\alpha\in[1,\infty)$ if $\epsilon_2=\epsilon_3$. By taking the limit as $k\to\infty$, it follows that $A^{\circ\alpha}$ is SR$(\epsilon)$ for all $\alpha\in(0,1]$ if $\epsilon_2\neq\epsilon_3$, and for all $\alpha\in[1,\infty)$ if $\epsilon_2=\epsilon_3$. For $\alpha=0$, $A^{\circ\alpha}$ is the matrix of all ones, which is SR$(\epsilon)$. This concludes the proof.
	\end{proof}
	
	At this point, a natural question arises: Are these the only exponents $\alpha$ for which $A^{\circ\alpha}$ remains SR$(\epsilon)$ whenever $A$ is a $3\times3$ SR$(\epsilon)$ matrix with $\epsilon_1=1$? The following theorem answers this question -- and more generally, provides a characterization for all fixed dimensions $m,n\geq1$.
	
	\begin{rem}
		Note that for the $m\times n$ zero matrix $A$ and any real number $\alpha<0$, the matrix $A^{\circ\alpha}$ is not defined. Therefore, in Theorem~\ref{Theorem_SR_power_classification}, we classify all power functions with non-negative exponents that entrywise preserve $m\times n$ SR$(\epsilon)$ matrices with $\epsilon_1=1$ for each fixed $m,n\geq1$.
	\end{rem}
	
	\begin{theorem}\label{Theorem_SR_power_classification}
		Let $m,n\geq1$ be integers, and define $d:=\min\{m,n\}$. Let $\epsilon=(\epsilon_1,\ldots,\epsilon_d)$ be a given sign pattern, where $\epsilon_1=1$. For $\alpha\in[0,\infty)$, the following statements are equivalent.
		\begin{itemize}
			\item[(1)] The map $x\mapsto x^{\alpha}$ entrywise preserves all $m\times n$ SR$(\epsilon)$ matrices.
			\item[(2)] The map $x\mapsto x^{\alpha}$ entrywise preserves all $d\times d$ SR$(\epsilon)$ matrices.
			\item[(3)] We have either $\alpha=0$, or
			\begin{itemize}
				\item[(a)] For $d=1,2:$ $\alpha\in(0,\infty)$.
				
				\item[(b)] For $d=3:$ $\alpha\in(0,1]$ if $\epsilon_2\neq\epsilon_3$, and $\alpha\in[1,\infty)$ if $\epsilon_2=\epsilon_3$.
				
				\item[(c)] For $d\geq4:$ $\alpha=1$.
			\end{itemize}
		\end{itemize}
	\end{theorem}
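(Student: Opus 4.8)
\emph{Overall plan.} I would establish the cycle $(3)\Rightarrow(1)\Rightarrow(2)\Rightarrow(3)$. The implication $(1)\Rightarrow(2)$ is immediate by a zero-bordering trick: if $A$ is a $d\times d$ SR$(\epsilon)$ matrix, then the $m\times n$ matrix obtained from $A$ by appending zero rows and columns is again SR$(\epsilon)$ (every minor is either a minor of $A$ or vanishes), and $A^{\circ\alpha}$ appears as a submatrix of the image of this bordered matrix under $f[-]$, hence is SR$(\epsilon)$; the same bordering, read in reverse, also reduces statement $(2)$ for $d\ge4$ to the case of $4\times4$ SR$((\epsilon_1,\dots,\epsilon_4))$ matrices, and throughout lets one pass freely between a matrix and its square submatrices. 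For $(3)\Rightarrow(1)$: when $\alpha=0$ the image is the all-ones matrix, which is SR$(\epsilon)$ because $\epsilon_1=1$; for $d\le2$ the claim is just monotonicity of $t\mapsto t^{\alpha}$ on $[0,\infty)$ for $\alpha\ge0$, applied to the $1\times1$ and $2\times2$ minors; for $d=3$ it follows from Corollary~\ref{Corollary_SR_3_powers} together with the fact that every $3\times3$ submatrix of an $m\times n$ SR$(\epsilon)$ matrix is $3\times3$ SR$(\epsilon)$, so that all minors of $A^{\circ\alpha}$ of orders $\le 3$ have the right signs; and for $d\ge4$ only $\alpha\in\{0,1\}$ is asserted, which is trivial.

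\emph{The implication $(2)\Rightarrow(3)$ and the low-dimensional cases.} First, $\alpha\ge0$ is forced, since SR$(\epsilon)$ already contains $d\times d$ matrices with a zero entry (a suitable permutation-type matrix, or a matrix bordered by zeros), so that $0^{\alpha}$ must be defined; this also settles $d\le2$. Next, Remark~\ref{Remark_SSR_3}(iii) reduces the list of sign patterns to be treated; for $d=3$ only $\epsilon=(1,1,1)$ and $\epsilon=(1,1,-1)$ remain, and Corollary~\ref{Corollary_SR_3_powers} already supplies one side of the asserted interval, so it remains to exclude $\alpha\in(0,1)$ when $\epsilon_2=\epsilon_3$ and $\alpha\in(1,\infty)$ when $\epsilon_2\ne\epsilon_3$. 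I would do this using the Dirichlet polynomial $F(\alpha)=\det A^{\circ\alpha}$ from the proof of Theorem~\ref{Theorem_SSR_3_powers}, taking $A$ in the normalized form used there with $\log x_1\log x_4<\log x_2\log x_3$, so that $F<0$ near $\alpha=0^{+}$. Combined with the double zero of $F$ at $0$, the at-most-three-real-roots bound from Descartes' rule (as in that proof), the sign of $F(1)=\det A$ prescribed by $\epsilon_3$, and the behavior of $F$ at $+\infty$, one finds that $F$ has exactly one further zero $r$, in $(0,1)$ when $\epsilon_2=\epsilon_3$ and in $(1,\infty)$ otherwise, and is of constant sign on each interval it determines. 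For a putative bad exponent $\alpha_0\ne1$, replacing each $x_i$ by $x_i^{\,t}$ scales $F$ to $F(t\,\cdot)$ and keeps $A_t$ in SSR$(\epsilon)$ exactly for $t$ on one side of $r$ (the only nontrivial constraint being $\epsilon_3\det A_t>0$); since $\alpha_0\ne1$, one can pick $t$ with $A_t\in\mathrm{SSR}(\epsilon)$ but $t\alpha_0$ on the opposite side of $r$, whence $\det A_t^{\circ\alpha_0}$ has sign $-\epsilon_3$ — contradicting $(2)$. A single explicit $A$ of each type suffices.

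\emph{The case $d\ge4$, and the main obstacle.} By the bordering reduction we may assume $m=n=d=4$; restricting to $3\times3$ submatrices and invoking the $d=3$ result confines $\alpha$ to $\{0\}\cup(0,1]$ (if $\epsilon_2\ne\epsilon_3$) or to $\{0\}\cup[1,\infty)$ (if $\epsilon_2=\epsilon_3$), so only a single open half-line remains to be eliminated. For each surviving bad $\alpha_0$ I would exhibit a $4\times4$ SR matrix $B$ whose $4\times4$ minor, after applying $x\mapsto x^{\alpha_0}$, has the wrong sign — the lower-order minors of $B^{\circ\alpha_0}$ being already controlled by the $d\le3$ results — via a Dirichlet-polynomial study of $\det B^{\circ\alpha}$, which now has a zero of multiplicity $\ge3$ at $\alpha=0$, leading coefficient $+1$ whenever $B$ has positive $2\times2$ minors, and (by a $4\times4$ refinement of the sign-change count) a bounded number of real zeros; one then rescales $B\mapsto B_t$ exactly as in the $d=3$ case, with Corollary~\ref{Corollary_SR_3_powers} keeping $B_t$ sign regular as $t$ drops below $1$. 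When the pattern permits — e.g.\ for $\epsilon=(1,1,1,-1)$ — the matrix $B$ may be taken of the explicit form obtained by bordering a degenerate $3\times3$ SR$((1,1,1))$ matrix $C$ (one with $\det C=0$) by a positive row, a positive diagonal corner $c$, and a column of zeros, so that $\det B^{\circ\alpha}=c^{\alpha}\det C^{\circ\alpha}$ directly inherits the sign changes of the $3\times3$ polynomial. I expect the decisive difficulty to be the remaining patterns, above all $\epsilon=(1,1,1,1)$, where one needs a genuinely four-dimensional $B$ — a totally positive $4\times4$ matrix with $\det B^{\circ\alpha}<0$ for $\alpha$ slightly above $1$ — together with the verification that the rescaled $B_t$ stay in the prescribed class; this requires sharpening the sign-change analysis behind Theorem~\ref{Theorem_SSR_3_powers} to order-$4$ Dirichlet polynomials, and a tidy case split over the four reduced sign patterns. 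The auxiliary facts — existence of SSR$(\epsilon)$ matrices (via Theorem~\ref{Theorem_Gantmacher_Krein}) and diagonal-scaling invariance (Remark~\ref{Remark_SSR_3}(i)) — are routine.
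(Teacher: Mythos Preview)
Your overall framework and the $d\le 3$ arguments are sound. The cycle $(3)\Rightarrow(1)\Rightarrow(2)\Rightarrow(3)$, the zero-bordering reduction, and the exclusion of $\alpha<0$ match the paper. Your $d=3$ argument via the rescaling $x_i\mapsto x_i^{\,t}$ (so that $\det A_t^{\circ\alpha}=F(t\alpha)$) is a legitimate alternative to the paper's approach, which instead picks one explicit \emph{singular} SR$((1,1,\pm1))$ matrix $C$, observes that $F_C$ then already has the full quota of real roots allowed by Descartes (a double root at $0$ and a simple root at $1$), and reads off the sign of $F_C$ on $(0,1)$ and on $(1,\infty)$ directly from Corollary~\ref{Corollary_SR_3_powers}. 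Both routes work; the paper's is shorter and produces a single matrix handling every bad $\alpha$ at once.

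The genuine gap is at $d=4$. Your proposed route --- an order-$4$ Dirichlet polynomial for $\det B^{\circ\alpha}$ plus rescaling --- does not go through as stated. First, the $4\times4$ expansion has up to $24$ exponential terms and many sign changes, so Descartes gives a bound far too weak to pin down the real zero set of $F_B$; there is no usable analogue of the three-root count. Second, your claim that ``Corollary~\ref{Corollary_SR_3_powers} keeps $B_t$ sign regular as $t$ drops below $1$'' fails precisely when $\epsilon_2=\epsilon_3$: for those patterns the corollary controls the $3\times3$ minors of $B^{\circ t}$ only for $t\ge 1$, so lowering $t$ below $1$ can destroy sign regularity already at order $3$. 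Third, your bordering construction for $\epsilon=(1,1,1,-1)$ omits the verification that the bordered matrix is actually SR$(\epsilon)$ (the appended row must be chosen compatibly with $C$), and as you yourself note, you have no construction at all for $\epsilon=(1,1,1,1)$.

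The paper resolves $d=4$ by a different device that you are missing. For each of the four sign patterns remaining after the $P_4$ reduction, it writes down an explicit one-parameter family $A_i(t)$ of $4\times4$ SR$(\epsilon)$ matrices with entries of the form $1+k_{ij}t$, chosen so that $\det A_i(t)=0$ identically in $t$; the lower-order minors are checked by hand to be polynomials in $t$ of the correct sign. Then, \emph{fixing $\alpha$ and Taylor-expanding $\det A_i(t)^{\circ\alpha}$ in $t$} (not in $\alpha$), one obtains a leading term $c(\alpha)\,t^k$ where $c(\alpha)$ is an explicit polynomial in $\alpha$ --- e.g.\ $2(\alpha^3-\alpha^4)$ or $-\tfrac{45}{4}\alpha^3(\alpha-1)^2(\alpha-2)$ --- whose sign is transparent on the entire half-line to be excluded. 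Choosing $t_\alpha>0$ small enough then gives $\epsilon_4\det A_i(t_\alpha)^{\circ\alpha}<0$. No Descartes-type root count is needed at order $4$, and no rescaling of a fixed matrix is involved; the $t$-expansion near a deliberately degenerate family is the key idea.
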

	\begin{proof}
		We complete the proof by showing the equivalences (1)$\iff$(2) and (2)$\iff$(3). The implication (2)$\implies$(1) is immediate. To show (1)$\implies$(2), take any $d\times d$ SR$(\epsilon)$ matrix $A_d$, and extend it to an $m\times n$ SR$(\epsilon)$ matrix $A$ by padding with zeros. Now $A^{\circ\alpha}$ is SR$(\epsilon)$, and consequently, all its submatrices including $A_d^{\circ\alpha}$ are SR$(\epsilon)$.
		
		Now, we show (3)$\implies$(2). Applying $x^0$ entrywise to any SR matrix with non-negative entries yields a matrix of all ones, which is sign regular. For $d=3$, the result follows from Corollary~\ref{Corollary_SR_3_powers}, while the remaining cases are immediate. Finally, we prove (2)$\implies$(3). It is easy to verify that all non-negative powers entrywise preserve $1\times1$ and $2\times2$ SR$(\epsilon)$ matrices. We now classify all exponents $\alpha\geq0$ for which the map $x\mapsto x^{\alpha}$ entrywise preserves every $d\times d$ SR$(\epsilon)$ matrix, for each fixed $d\geq3$.
		\begin{itemize}[leftmargin=0pt,label={}]
			\item \textbf{Case 1.} $d=3$: For any $3\times3$ SR$(\epsilon)$ matrix $A$, the below table outlines the known results and the necessary conditions required to show that only the specified powers in (3)(b) entrywise preserve $A$.
			\begin{table}[H] %The parameter H (along with \usepackage{float}) passed to the table environment declaration establishes that this table must be placed here, and override LaTeX defaults. 
				\begin{tabular}{|c|c|c|}
					\hline
					$\boldsymbol{\epsilon}$ &  $\boldsymbol{\alpha: A^{\circ\alpha}}$ \textbf{is SR}$\boldsymbol{(\epsilon)}$ & \textbf{To construct an} \textbf{SR}$\boldsymbol{(\epsilon)}$ \textbf{matrix} $\boldsymbol{A}$ \textbf{such that} \\ [0.5ex] 
					\hline\hline
					
					$\epsilon_2=\epsilon_3$ & $\{0\}\cup[1,\infty)$ & $\epsilon_3\det A^{\circ\alpha}<0$ $\forall$ $\alpha\in(0,1)$ \\ [0.5ex] \hline
					
					$\epsilon_2\neq\epsilon_3$ & $[0,1]$ & $\epsilon_3\det A^{\circ\alpha}<0$ $\forall$ $\alpha\in(1,\infty)$ \\ [0.5ex] \hline
				\end{tabular}
				
				\caption{Table listing the two cases for $3\times3$ SR($\epsilon$) matrices.}
				\label{Table_SR_3}
			\end{table}
			
			We now construct matrices that satisfy the conditions in the last column of Table~\ref{Table_SR_3}. Let
			\begin{equation*}%\label{A1}
				A=\begin{pmatrix}
					3 & 1 & 2 \\
					1 & 1 & 4 \\
					1 & 2 & 9 
				\end{pmatrix}.
			\end{equation*}
			This singular matrix is SR$(\epsilon)$, where $\epsilon=(\epsilon_1,\epsilon_2,\epsilon_3)$ with $\epsilon_1=\epsilon_2=1$ and $\epsilon_3\in\{\pm1\}$. Define the function
			\begin{align*}
				F(\alpha):=\det A^{\circ\alpha} = -2^\alpha+2(4^\alpha)-9^\alpha-24^\alpha+27^\alpha.
			\end{align*}
			Note that $F(0)=F(1)=0$ and $F^{\prime}(0)=0$. By Theorem~\ref{Jameson}, $F$ has at most three real roots, so $F$ has no other real roots besides 0 (a double root) and 1. Thus, $F(\alpha)\neq0$ for all $\alpha\in(0,\infty)\setminus\{1\}$. By Corollary~\ref{Corollary_SR_3_powers}, $\epsilon_3F(\alpha)>0$ for all $\alpha>1$, if $\epsilon_2=\epsilon_3$, and $\epsilon_3F(\alpha)>0$ for all $\alpha\in(0,1)$, if $\epsilon_2\neq\epsilon_3$. This shows that 
			\begin{align*}
				\epsilon_3F(\alpha)<\begin{cases}
					0 ~ \text{for all} ~ \alpha\in(0,1), &\text{if} ~ \epsilon_2=\epsilon_3, \\
					0 ~ \text{for all} ~ \alpha\in(1,\infty), &\text{if} ~ \epsilon_2\neq\epsilon_3.
				\end{cases}
			\end{align*}
			Now, for the sign pattern $\epsilon=(\epsilon_1,\epsilon_2,\epsilon_3)$ with $\epsilon_1\neq\epsilon_2$ and $\epsilon_3\in\{\pm1\}$, the result follows from Remark~\ref{Remark_SSR_3}(iii) and the above case.
			
			\item \textbf{Case 2.} $d=4$: We claim that only the trivial powers entrywise preserve all $4\times4$ SR$(\epsilon)$ matrices. Embedding the $3\times3$ SR matrix from case 1 via padding by zeros, one can conclude that powers in the interval $(0,1)$ fail to preserve $4\times4$ SR$(\epsilon)$ matrices when $\epsilon_2=\epsilon_3$, while powers in the interval $(1,\infty)$ fail to preserve them when $\epsilon_2\neq\epsilon_3$. This result is summarized in the following table along with the examples needed to discard the remaining powers.
			\begin{table}[H]
				\begin{tabular}{|c|c|c|}
					\hline
					$\boldsymbol{\epsilon}$ & $\boldsymbol{\alpha : A^{\circ\alpha}}$ \textbf{is not SR}$\boldsymbol{(\epsilon)}$ & \textbf{To construct SR}$\boldsymbol{(\epsilon)}$ \textbf{matrices} $\boldsymbol{A}_{\scriptscriptstyle{\alpha}}$ \textbf{such that} \\ [0.5ex] 
					\hline\hline
					
					$\epsilon_2=\epsilon_3$ & $(0,1)$ & $\epsilon_4\det A^{\circ\alpha}_{\scriptscriptstyle{\alpha}}<0$ $\forall$ $\alpha\in(1,\infty)$ \\ [0.5ex] \hline 
					
					$\epsilon_2\neq\epsilon_3$ & $(1,\infty)$ & $\epsilon_4\det A^{\circ\alpha}_{\scriptscriptstyle{\alpha}}<0$ $\forall$ $\alpha\in(0,1)$ \\ [0.5ex] \hline
				\end{tabular}
				
				\caption{Table listing the cases for $4\times4$ SR$(\epsilon)$ matrices.}
				\label{Table_SR_4}
			\end{table}
			
			To construct such matrices, observe that for any $4\times4$ SR$(\epsilon)$ matrix $A$, the following identities hold:
			\[\det A = \det (AP_4) \quad \text{and} \quad \det A^{\circ\alpha} = \det (A^{\circ\alpha}P_4) = \det (AP_4)^{\circ\alpha}.\] 
			This implies that $\epsilon_4=\epsilon_4^{\prime}$, and if $\epsilon_4\det A^{\circ\alpha}<0$, then $\epsilon_4^{\prime}\det(AP_4)^{\circ\alpha}<0$, where $\epsilon^{\prime}$ denotes the sign pattern of $AP_4$. Consequently, it suffices to construct four $4\times4$ SR$(\epsilon)$ matrices whose sign patterns are not related via $P_4$, and which satisfy the determinant conditions listed in Table~\ref{Table_SR_4}. We provide these examples below.
			
			\textbf{Subcase 2.1.} $\epsilon_2=\epsilon_3$: We show that for every $\alpha>1$, there exists a $4\times4$ SR$(\epsilon)$ matrix $A$ such that $\epsilon_4\det A^{\circ\alpha}<0$.
			
			First, consider the case $\epsilon_1=\epsilon_4$. Define the matrix
			\begin{align*}
				A_1(t) := \begin{pmatrix}
					1 & 1 & 1 & 1 \\
					1 & 1+2t & 1+3t & 1+4t \\
					1 & 1+4t & 1+6t & 1+8t \\
					1 & 1+5t & 1+8t & 1+11t
				\end{pmatrix}, \quad t\geq0.
			\end{align*}
			One can verify that all $2\times 2$ minors have the form $ut+vt^2$, where $u>0, v\geq0$; all $3\times 3$ minors are of the form $ut^2$, where $u\geq0$; and $\det A_1(t)=0$. Thus, $A_1(t)$ is SR$(\epsilon)$ with $\epsilon_2=\epsilon_3$ and $\epsilon_1=\epsilon_4$ for all $t\geq0$. Carrying out a Taylor expansion of $\det A_1(t)^{\circ\alpha}$ at $t=0$ gives
			\begin{align*}
				\det A_1(t)^{\circ\alpha}= 2(\alpha^3-\alpha^4)t^4 + O(t^5) ~ \implies ~ \displaystyle{\lim_{t\to0^+}\frac{\det A_1(t)^{\circ\alpha}}{t^4}} = 2(\alpha^3-\alpha^4).
			\end{align*}
			Since $2(\alpha^3-\alpha^4)<0$ for all $\alpha>1$, it follows that
			\[\displaystyle{\lim_{t\to0^+}}\det A_1(t)^{\circ\alpha} < 0, \quad \forall ~ \alpha>1.\]
			Hence, for any given $\alpha\in(1,\infty)$, there exists small $t_{\alpha}>0$ such that $\det A_1(t_{\alpha})^{\circ\alpha}<0$. Since $\epsilon_1=\epsilon_4$, we have $\epsilon_4\det A_1(t_{\alpha})^{\circ\alpha}<0$.
			
			Now consider the case $\epsilon_1\neq\epsilon_4$, and define the matrix
			\begin{align*}
				A_2(t) := \begin{pmatrix}
					1 & 1 & 1 & 1 \\
					1 & 1+3t & 1+5t & 1+7t \\
					1 & 1+9t & 1+17t & 1+27t \\[2pt]
					1 & 1+11t & 1+23t & 1+\frac{119}{3}t
				\end{pmatrix}, \quad t\geq0.
			\end{align*}
			One can show that all $2\times 2$ minors are of the form $ut+vt^2$, where $u>0, v\geq0$; all $3\times 3$ minors are of the form $ut^2$, where $u>0$; and $\det A_2(t)=0$. Hence, $A_2(t)$ is SR$(\epsilon)$ with $\epsilon_2=\epsilon_3$ and $\epsilon_1\neq\epsilon_4$ for all $t\geq0$. Performing a Taylor expansion of $\det A_2(t)^{\circ\alpha}$ at $t=0$ yields
			\begin{align*}
				\det A_2(t)^{\circ\alpha}=-\frac{1084}{3}(\alpha^3-\alpha^4)t^4 + O(t^5) ~ \implies ~ \displaystyle{\lim_{t\to0^+}\frac{\det A_2(t)^{\circ\alpha}}{t^4}} = -\frac{1084}{3}(\alpha^3-\alpha^4),
			\end{align*}
			and this is positive for all $\alpha>1$. It follows that for any $\alpha>1$, we have $\det A_2(t_{\alpha})^{\circ\alpha}>0$ for sufficiently small $t_{\alpha}>0$. Moreover, as $\epsilon_1\neq\epsilon_4$, this implies $\epsilon_4\det A_2(t_{\alpha})^{\circ\alpha}<0$.
			
			\textbf{Subcase 2.2.} $\epsilon_2\neq\epsilon_3$: For any given $\alpha\in(0,1)$, we construct an SR$(\epsilon)$ matrix $A$ such that $\epsilon_4\det A^{\circ\alpha}<0$.
			
			Again, we first assume $\epsilon_1=\epsilon_4$, and define the matrix
			\begin{align*}
				A_3(t) := \begin{pmatrix}
					1 & 1 & 1 & 1 \\
					1 & 1+2t & 1+3t & 1+4t \\[2pt]
					1 & 1+3t & 1+\frac{9}{2}t & 1+6t \\[2pt]
					1 & 1+5t & 1+\frac{15}{2}t & 1+10t
				\end{pmatrix}, \quad t\geq0.
			\end{align*}
			Observe that all $2\times 2$ minors are of the form $ut$, where $u>0$; all $3\times 3$ minors are zero; and hence $\det A_3(t)=0$. Thus, $A_3(t)$ is SR$(\epsilon)$ with $\epsilon_2\neq\epsilon_3$ and $\epsilon_1=\epsilon_4$ for all $t\geq0$. A Taylor expansion of $\det A_3(t)^{\circ\alpha}$ at $t=0$ gives
			\begin{align*}
				\det A_3(t)^{\circ\alpha}= -\frac{45}{4}(2\alpha^3-5\alpha^4+4\alpha^5-\alpha^6)t^6 + O(t^7) ~ \implies ~ \displaystyle{\lim_{t\to0^+}\frac{\det A_3(t)^{\circ\alpha}}{t^6}} = -\frac{45}{4}p(\alpha),
			\end{align*}
			where $p(\alpha):= 2\alpha^3-5\alpha^4+4\alpha^5-\alpha^6=\alpha^3(\alpha-1)^2 (2-\alpha)$. That $p$ is positive on $(0,1)$ is straightforward from the given factorization. Thus, $-\frac{45}{4}p(\alpha)<0$ for all $\alpha\in(0,1)$. This implies that for any $\alpha\in(0,1)$, $\epsilon_4\det A_3(t_{\alpha})^{\circ\alpha}<0$ for small $t_{\alpha}>0$, as $\epsilon_1=\epsilon_4$.
			
			Next, suppose $\epsilon_1\neq\epsilon_4$, and define
			\begin{align*}
				A_4(t) := \begin{pmatrix}
					1 & 1 & 1 & 1 \\
					1 & 1+2t & 1+3t & 1+4t \\[2pt]
					1 & 1+4t & 1+\frac{17}{3}t & 1+\frac{22}{3}t \\[2pt]
					1 & 1+5t & 1+7t & 1+9t
				\end{pmatrix}, \quad 0\leq t\leq1.
			\end{align*}
			Note that all $2\times 2$ minors are either of the form $ut$ or $u(t-t^2)$, where $u>0$; all $3\times 3$ minors are of the form $-ut^2$, where $u\geq0$; and $\det A_4(t)=0$. This shows that $A_4(t)$ is SR$(\epsilon)$ with $\epsilon_2\neq\epsilon_3$ and $\epsilon_1\neq\epsilon_4$ for all $0\leq t\leq1$. Now, we carry out a Taylor expansion of $\det A_4(t)^{\circ\alpha}$ at $t=0$ to obtain
			\begin{align*}
				\det A_4(t)^{\circ\alpha}= \frac{2}{9}(\alpha^3-\alpha^4)t^4 + O(t^5) ~ \implies ~ \displaystyle{\lim_{t\to0^+}\frac{\det A_4(t)^{\circ\alpha}}{t^4}} = \frac{2}{9}(\alpha^3-\alpha^4).
			\end{align*}
			Since $\frac{2}{9}(\alpha^3-\alpha^4)>0$ for all $\alpha\in(0,1)$, it follows that for any given $\alpha\in(0,1)$, we can choose small $t_{\alpha}>0$ such that $\epsilon_4\det A_4(t_{\alpha})^{\circ\alpha}<0$, as $\epsilon_1\neq\epsilon_4$. This finishes the proof for $d=4$.
			
			\item \textbf{Case 3.} $d\geq5$: The result follows by embedding the above $3\times3$ SR$(\epsilon)$ matrix $A$ and $4\times4$ SR$(\epsilon)$ matrices $A_i(t)$ into a larger $d\times d$ matrix as follows: $A\oplus0_{(d-3)\times (d-3)}$ and $A_i(t)\oplus0_{(d-4)\times (d-4)}$ for $i=1,\ldots,4$. This completes the proof. \qedhere
		\end{itemize}
	\end{proof}
	
	Using Theorem~\ref{Theorem_SR_power_classification}, we now classify the power functions that preserve $m\times n$ SSR$(\epsilon)$ matrices under entrywise application. The proof also uses the following result, proved in our recent work~\cite{CY-SSR_Construction24}, which allows the embedding of SSR matrices into higher-dimensional SSR matrices via the addition of rows and columns.
	
	\begin{theorem}\cite[Theorem A]{CY-SSR_Construction24}\label{Theorem_line_insertion_SSR}
		Given integers $m,n\geq1$ and an $m\times n$ SSR matrix, it is possible to add a row/column to any of its borders such that the resulting matrix remains SSR. If minors of a larger size occur, they can be made either all positive or all negative.
	\end{theorem}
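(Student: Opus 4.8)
The plan is to use the symmetries of the SSR class to reduce the four border cases to a single one --- appending a new rightmost column --- and then to build that column by hand, controlling the minors of the enlarged matrix via Laplace expansion along the new column together with Sylvester-type identities among the minors of the given matrix.

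\textbf{Reductions.} Since $\det X=\det X^{\top}$, the transpose of an SSR matrix is SSR with the same sign pattern, so ``add a row'' reduces to ``add a column''. Since left/right multiplication by the exchange matrices $P_{m},P_{n}$ reverses the rows/columns and, by Remark~\ref{Remark_SSR_3}(ii), transforms the sign pattern explicitly, appending at the left (resp.\ top) border reduces to appending at the right (resp.\ bottom) border. Finally $-A$ is SSR with sign pattern $((-1)^{k}\epsilon_{k})_{k}$, so we may assume $\epsilon_{1}=1$, i.e.\ every entry of $A$ is positive. It therefore suffices to show: given an $m\times n$ SSR$(\epsilon)$ matrix $A$ with positive entries and any $\eta\in\{\pm1\}$, there is $v\in(0,\infty)^{m}$ with $B:=[\,A\mid v\,]$ SSR, and with all $(d+1)\times(d+1)$ minors of $B$ of sign $\eta$ when $n<m$ --- the only case in which minors of a new size appear.

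\textbf{Linearization and openness.} A $k\times k$ minor of $B$ avoiding the last column is a minor of $A$ and already has sign $\epsilon_{k}$. A $k\times k$ minor of $B$ using the last column, indexed by rows $I$ with $|I|=k$ and columns $J\subseteq\{1,\dots,n\}$ with $|J|=k-1$, equals, by expansion along the new column, a homogeneous linear functional
\[
\ell_{I,J}(v)=\sum_{i\in I}\pm\,v_{i}\,\det A[I\setminus\{i\},J],
\]
whose coefficients are $\pm$ $(k-1)\times(k-1)$ minors of $A$, all of sign $\epsilon_{k-1}$. Hence $B$ has the required signs precisely when $v$ lies in the open convex cone cut out of $(0,\infty)^{m}$ by the finitely many strict homogeneous inequalities $\sigma_{I,J}\,\ell_{I,J}(v)>0$, where $\sigma_{I,J}=\epsilon_{|I|}$ if $|I|\le d$ and $\sigma_{I,J}=\eta$ if $|I|=d+1$; it suffices to exhibit one point of this cone.

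\textbf{Construction.} I would satisfy the constraints in increasing order of size. The size-$1$ constraints fix the positive orthant; the size-$2$ constraints impose a chain of ratio inequalities $v_{j}/v_{i}>c_{ij}$ or $v_{j}/v_{i}<c_{ij}$ ($i<j$), which is met by a geometric vector $v_{i}=t^{w_{i}}$ with monotone exponents and $t$ near $1$. Inductively, the constraints of sizes $2,\dots,d$ carve out a nonempty open cone $\mathcal{C}$: a Sylvester / Cauchy--Binet identity rewrites each size-$k$ functional $\ell_{I,J}$ in terms of already-controlled lower minors of $A$, so the new inequalities are compatible with the old ones. If $n\ge m$, any $v\in\mathcal{C}$ works. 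If $n<m$, it remains to choose $v\in\mathcal{C}$ making \emph{every} size-$(d+1)$ functional carry the common sign $\eta$; I would do this by letting one coordinate of $v$ dominate inside $\mathcal{C}$, the relevant Sylvester identities --- of the shape $|M_{\mathrm{big}}|-c\,|M_{\mathrm{big}}'|=(\text{positive})\cdot|M_{\mathrm{small}}|$, the mechanism already visible in the $3\times2\to3\times3$ case --- ensuring that the dominant monomial of each $(d+1)$-minor can be forced to sign $\eta$ for either choice of $\eta$. Openness then upgrades this to an honest $v$.

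\textbf{Main obstacle.} The crux is the last step: exhibiting one $v$ meeting \emph{all} the sign constraints simultaneously, over all sizes $k=1,\dots,d+1$ and all $(I,J)$, and --- for the ``larger minors'' clause --- forcing the size-$(d+1)$ functionals to \emph{either} common sign. This is not soft; it needs genuine identities among the minors of $A$ of consecutive sizes, plus bookkeeping of which monomial in $v$ dominates each $\ell_{I,J}$, so I expect the real proof to exhibit an explicit parametrized family of extension columns, with a limiting regime or the sign of a parameter governing $\eta$. (The tempting shortcut of conjugating $A$ to a totally positive matrix by signature and exchange-matrix factors and then quoting classical one-column bordering results for TP matrices fails, since not every SSR sign pattern arises as $S_{1}\,(\mathrm{TP})\,S_{2}P_{n}$: e.g.\ $(1,1,-1)$ occurs for $3\times3$ SSR matrices, such as the matrix with rows $(1,1,1)$, $(1,2,3)$, $(1,4,\tfrac{13}{2})$, but never in that form.)
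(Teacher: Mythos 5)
First, note that the paper does not prove Theorem~\ref{Theorem_line_insertion_SSR} at all: it is imported verbatim from \cite{CY-SSR_Construction24}, so there is no in-paper argument to compare yours against. Judged on its own terms, your proposal gets the framing right but has a genuine gap at its core. The reductions (transpose to convert row-insertion to column-insertion, exchange matrices to move between left/right and top/bottom borders with the sign-pattern transformation of Remark~\ref{Remark_SSR_3}(ii), negation to normalize $\epsilon_1=1$) are all valid, the Laplace expansion showing each new minor is linear in $v$ with coefficients that are signed lower-order minors of $A$ is correct, and the reformulation as ``exhibit one point of an open convex cone cut out by finitely many strict homogeneous inequalities'' is exactly the right way to pose the problem. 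The closing parenthetical is also a correct and worthwhile observation: $(1,1,-1)$ is a realizable $3\times3$ SSR sign pattern (your example checks out, with determinant $-1/2$) that is not of the form signature-conjugate-of-TP times $P_3$, so one genuinely cannot reduce to the classical TP bordering results.

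The gap is that the nonemptiness of that cone --- which is the entire content of the theorem --- is never established. Your inductive step (``a Sylvester / Cauchy--Binet identity rewrites each size-$k$ functional in terms of already-controlled lower minors, so the new inequalities are compatible with the old ones'') names a tool but not an argument: you do not specify which identity, nor why expressing $\ell_{I,J}$ through lower minors forces the size-$k$ half-spaces to meet the cone already constructed, and proving nonemptiness stage-by-stage is not obviously easier than proving it outright. Likewise, ``letting one coordinate of $v$ dominate inside $\mathcal{C}$'' to force all $(d+1)$-minors to a common chosen sign $\eta$ is asserted, not shown; it is not clear that the dominant-monomial analysis is compatible with the lower-order ratio constraints for every SSR sign pattern, and this is precisely where the ``either all positive or all negative'' clause must be earned. (A small symptom: your geometric vector $v_i=t^{w_i}$ with ``$t$ near $1$'' has all ratios $v_j/v_i$ near $1$ and so already fails size-$2$ constraints whose thresholds exceed $1$; one needs $t$ large or small, and the subsequent bookkeeping for sizes $3,\dots,d+1$ is the hard part.) You candidly identify this as the main obstacle yourself. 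The cited proof in \cite{CY-SSR_Construction24} resolves it by exhibiting explicit parametrized bordering vectors; without such a construction, or a completed compatibility argument, the proposal is a plan rather than a proof.
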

	
	\begin{cor}\label{Corollary_SSR_power_classification}
		Assume that $m,n\geq1$ are integers, and define $d:=\min\{m,n\}$. Let $\epsilon=(\epsilon_1,\ldots,\epsilon_d)$ be a given sign pattern with $\epsilon_1=1$. For $\alpha\in\mathbb{R}$, the following statements are equivalent.
		\begin{itemize}
			\item[(1)] The map $x\mapsto x^{\alpha}$ entrywise preserves all $m\times n$ SSR$(\epsilon)$ matrices.
			\item[(2)] The map $x\mapsto x^{\alpha}$ entrywise preserves all $d\times d$ SSR$(\epsilon)$ matrices.
			\item[(3)]
			\begin{itemize}
				\item[(a)] For $d=1:$ $\alpha\in\mathbb{R}$.
				\item[(b)] For $d=2:$ $\alpha\in(0,\infty)$.
				\item[(c)] For $d=3:$ $\alpha\in(0,1]$ if $\epsilon_2\neq\epsilon_3$, and $\alpha\in[1,\infty)$ if $\epsilon_2=\epsilon_3$.
				\item[(d)] For $d\geq4:$ $\alpha=1$.
			\end{itemize}
		\end{itemize}
	\end{cor}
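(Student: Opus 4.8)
The plan is to first collapse the equivalence $(1)\Longleftrightarrow(2)$ into a dimension-free statement about small square blocks, and then pin down the admissible exponents by reducing to the sign regular case already settled in Theorem~\ref{Theorem_SR_power_classification}. Write $\epsilon^{(k)}:=(\epsilon_1,\ldots,\epsilon_k)$ and $f(x):=x^{\alpha}$. The first observation is that every $k\times k$ submatrix $A[S,T]$ (with $|S|=|T|=k$) of an $m\times n$ SSR$(\epsilon)$ matrix $A$ is itself SSR$(\epsilon^{(k)})$, since its $j\times j$ minors are a subset of those of $A$; moreover $f[A][S,T]=f[A[S,T]]$. Hence $f[-]$ preserves all $m\times n$ SSR$(\epsilon)$ matrices if and only if $f[-]$ preserves all $k\times k$ SSR$(\epsilon^{(k)})$ matrices for every $k\le d$. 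The only nontrivial direction here is passing \emph{upward} in dimension: given a $k\times k$ SSR$(\epsilon^{(k)})$ matrix $B$, I would apply Theorem~\ref{Theorem_line_insertion_SSR} repeatedly, alternately appending a column and a row, and at each row-append step (where top-size minors are newly created) choosing them to all have sign $\epsilon_{k+1}$, then $\epsilon_{k+2}$, and so on; since column-appends and the surviving smaller minors do not change any existing sign, this embeds $B$ as a submatrix of a $d\times d$ SSR$(\epsilon)$ matrix $\widetilde B$. Then $(2)$ forces $f[\widetilde B]$, hence its submatrix $f[B]$, to be SSR, so $f[B]$ is SSR$(\epsilon^{(k)})$. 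The same line-insertion mechanism (appending only columns, which creates no larger minors and so preserves the pattern $\epsilon$) gives $(1)\Longrightarrow(2)$, after using transpose invariance of SSR$(\epsilon)$ to assume $m=d\le n$.

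For $(3)\Longrightarrow(2)$ — the easy direction — I would argue case by case in $d$. For $d=1$, $x\mapsto x^{\alpha}$ maps $(0,\infty)$ into $(0,\infty)$ for every $\alpha\in\mathbb{R}$. For $d=2$ and $\alpha>0$, the map is strictly increasing, so $\operatorname{sgn}\big((b_{11}b_{22})^{\alpha}-(b_{12}b_{21})^{\alpha}\big)=\operatorname{sgn}(b_{11}b_{22}-b_{12}b_{21})$, which together with preservation of the $1\times1$ minors gives the claim. For $d=3$ this is exactly Theorem~\ref{Theorem_SSR_3_powers}. For $d\ge4$, $\alpha=1$ is the identity map. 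Combining with the reduction of the previous paragraph then yields $(1)$.

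For $(2)\Longrightarrow(3)$ I would proceed as follows. By the reduction, $(2)$ forces $f[-]$ to preserve $2\times2$ SSR$(\epsilon_1,\epsilon_2)$ matrices whenever $d\ge2$; but for a positive $2\times2$ SSR matrix $B$ the value $\alpha=0$ makes $\det f[B]=0$, and $\alpha<0$ reverses the sign of $\det B$ through $\det f[B]=(b_{11}b_{22})^{\alpha}-(b_{12}b_{21})^{\alpha}$, so in either case $f[B]\notin$ SSR$(\epsilon_1,\epsilon_2)$; hence $\alpha>0$ for all $d\ge2$, which already settles $d=2$. For $d\ge3$, fix $\alpha>0$. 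Using the Gantmacher--Krein density theorem (Theorem~\ref{Theorem_Gantmacher_Krein}) together with the continuity of $x\mapsto x^{\alpha}$ on $[0,\infty)$, any $d\times d$ SR$(\epsilon)$ matrix $A$ is a limit of SSR$(\epsilon)$ matrices $A_k$, and $A_k^{\circ\alpha}\to A^{\circ\alpha}$ with each $A_k^{\circ\alpha}$ SSR$(\epsilon)$ by $(2)$; thus $A^{\circ\alpha}$ lies in the closure of the SSR$(\epsilon)$ matrices, i.e.\ $A^{\circ\alpha}$ is SR$(\epsilon)$. Therefore $f[-]$ preserves all $d\times d$ SR$(\epsilon)$ matrices, and Theorem~\ref{Theorem_SR_power_classification} then forces $\alpha\in\{0\}\cup(0,1]$ or $\{0\}\cup[1,\infty)$ when $d=3$ (according to $\epsilon_2\ne\epsilon_3$ or $\epsilon_2=\epsilon_3$), and $\alpha\in\{0,1\}$ when $d\ge4$. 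Intersecting with $\alpha>0$ gives exactly $(3)(c)$ and $(3)(d)$.

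The genuinely delicate step is the upward dimension reduction in the first paragraph: entrywise preserver problems are not automatically monotone in matrix size, and the whole argument hinges on the SSR line-insertion result (Theorem~\ref{Theorem_line_insertion_SSR}) and, crucially, on the freedom it grants to prescribe the sign of the newly appearing top minors so that we can realize the \emph{given} pattern $\epsilon$. A secondary point requiring care is the interaction with zero entries: SSR matrices have no zero entries, yet the density argument in the last paragraph passes through SR$(\epsilon)$ matrices which may, so one must record that $x\mapsto x^{\alpha}$ (with the convention $0^0:=1$) is continuous on $[0,\infty)$ for every $\alpha\ge0$, legitimizing the limiting argument.
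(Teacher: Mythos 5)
Your proposal is correct and follows essentially the same route as the paper: the equivalence of (1) and (2) via the SSR line-insertion theorem, sufficiency of the exponents via Theorem~\ref{Theorem_SSR_3_powers}, exclusion of $\alpha\leq0$ using $2\times2$ SSR matrices, and necessity for $d\geq3$ via Gantmacher--Krein density combined with Theorem~\ref{Theorem_SR_power_classification}. Your more systematic reduction to $k\times k$ blocks for all $k\leq d$ is a presentational elaboration of the paper's argument rather than a different method.
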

	\begin{proof}
		The implication (1)$\implies$(2) holds by extending a $d\times d$ SSR$(\epsilon)$ matrix to an $m\times n$ SSR$(\epsilon)$ matrix by adding rows/columns to its borders (see Theorem~\ref{Theorem_line_insertion_SSR}). The converse (2)$\implies$(1) is immediate. We have shown (3)$\implies$(2) in Theorem~\ref{Theorem_SSR_3_powers} for $d=3$, and the remaining cases follow easily. It remains to prove (2)$\implies$(3). The case $d=1$ is trivial. For $d\geq2$, observe that taking $\alpha=0$ gives a matrix whose entries are all one. Consequently, any square submatrix of size at least $2\times2$ has zero determinant, and hence $\alpha=0$ must be excluded. Letting $d=2$, it is easy to see that for any $\alpha>0$, $x^{\alpha}$ entrywise preserves $d\times d$ SSR$(\epsilon)$ matrices. Now, choose $x_1,\ldots,x_4>0$ such that $x_1x_4\neq x_2x_3$. Then $A=\begin{pmatrix}
			x_1 & x_2 \\
			x_3 & x_4
		\end{pmatrix}$ is SSR, while the determinants of $A$ and $A^{\circ\alpha}$ have opposite signs whenever $\alpha<0$. Hence, we must have $\alpha>0$. For $d\geq3$, the condition $\alpha>0$ ensures that we can extend $x^{\alpha}$ continuously to $x=0$. By continuity and Theorem~\ref{Theorem_Gantmacher_Krein}, the function $x\mapsto x^{\alpha}$ preserves SR$(\epsilon)$ matrices. Now Theorem~\ref{Theorem_SR_power_classification} completely characterizes such power functions, concluding the proof.
	\end{proof}
	
	\subsection{General entrywise preservers}
	
	Thus far, we have characterized all exponents $\alpha$ for which the transformed $m\times n$ matrix $A^{\circ\alpha}$ is SR$(\epsilon)$/SSR$(\epsilon)$ whenever $A$ is SR$(\epsilon)$/SSR$(\epsilon)$, for every fixed $m,n\geq1$ and any given sign pattern $\epsilon$ with $\epsilon_1=1$. We now turn our attention to the more challenging question of characterizing all functions that preserve the sets of $m\times n$ SR$(\epsilon)$ and SSR$(\epsilon)$ matrices for each $m,n\geq2$. Our approach begins by classifying the continuous functions, showing that such functions must be power functions. Subsequently, we establish that any entrywise preserver of (strict) sign regularity must be continuous on the interval $(0,\infty)$. We require the following classical result due to Darboux.
	
	\begin{theorem}\cite[Darboux]{Darboux75}\label{Theorem_Darboux}
		Let $f:\mathbb{R}\to\mathbb{R}$ satisfies the Cauchy functional equation, i.e., $f(x+y) = f(x) + f(y)$ for all $x,y\in\mathbb{R}$, and let $f$ be continuous at a single point. Then $f(x)=f(1)x$ for all $x\in\mathbb{R}$.
	\end{theorem}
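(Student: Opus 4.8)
The plan is to extract everything possible from the Cauchy functional equation purely algebraically first, thereby pinning down $f$ on the rationals, and then to leverage the single point of continuity to force continuity everywhere, closing the argument with a density argument.

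First I would derive the standard consequences of additivity. Putting $x=y=0$ gives $f(0)=0$; a straightforward induction on $n$ gives $f(nx)=nf(x)$ for all $n\in\mathbb{N}$; and $0=f(0)=f(x)+f(-x)$ gives $f(-x)=-f(x)$, so in fact $f(nx)=nf(x)$ for every $n\in\mathbb{Z}$. Replacing $x$ by $x/n$ shows $f(x/n)=f(x)/n$, and combining these identities yields $f(qx)=qf(x)$ for every $q\in\mathbb{Q}$ and every $x\in\mathbb{R}$. In particular, $f(q)=f(1)\,q$ for all $q\in\mathbb{Q}$.

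Next I would promote continuity at a single point $x_0$ to continuity at the origin: if $h_n\to0$, then $x_0+h_n\to x_0$, so $f(x_0)+f(h_n)=f(x_0+h_n)\to f(x_0)$, which forces $f(h_n)\to0=f(0)$. This local continuity at $0$ then spreads to every $x\in\mathbb{R}$, since for $h_n\to0$ we have $f(x+h_n)=f(x)+f(h_n)\to f(x)$. Hence $f$ is continuous on all of $\mathbb{R}$.

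Finally, $f$ and the linear map $x\mapsto f(1)x$ are both continuous on $\mathbb{R}$ and agree on the dense subset $\mathbb{Q}\subseteq\mathbb{R}$ by the first step, so they coincide everywhere, giving $f(x)=f(1)x$ for all $x\in\mathbb{R}$. There is no genuine difficulty in this classical fact; the only step that requires a moment's care — and where additivity carries the argument — is the passage from continuity at one point to continuity at the origin.
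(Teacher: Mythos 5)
Your proof is correct and complete; it is the standard classical argument (additivity pins down $f$ on $\mathbb{Q}$, continuity at one point propagates to continuity everywhere, and density of $\mathbb{Q}$ finishes). The paper does not prove this statement at all — it is quoted as a classical result of Darboux with only a citation — so there is no internal proof to compare against, and your write-up would serve as a perfectly adequate self-contained justification.
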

	
	\begin{theorem}\label{Theorem_SR_Fixed_1_continuous_power}
		Let $m,n\geq2$ be integers, and let $\epsilon = (\epsilon_1,\ldots,\epsilon_{\min\{m,n\}})$ be a given sign pattern with $\epsilon_1=1$. Suppose $f:[0,\infty)\to\mathbb{R}$ is continuous and entrywise preserves all $m\times n$ SR($\epsilon$) matrices. Then $f(x)=f(1)x^{\alpha}$ for some $\alpha\geq0$.
	\end{theorem}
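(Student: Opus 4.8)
The plan is to feed rank-one matrices into $f[-]$ -- these are automatically SR$(\epsilon)$ -- and to extract a multiplicative functional equation using the fact that transposing two columns negates a $2\times2$ minor. Fix $u\in(0,\infty)^m$ and $v\in(0,\infty)^n$ and let $A=uv^{T}$, i.e. $A_{ij}=u_iv_j$. All entries of $A$ are positive, so every $1\times1$ minor has sign $\epsilon_1=1$, and all $k\times k$ minors with $k\ge2$ vanish; hence $A$ is SR$(\epsilon)$, and so is every matrix obtained from $A$ by permuting rows or columns (each such matrix is again a positive rank-one matrix). By hypothesis $f[A]$ is SR$(\epsilon)$, and since entrywise application of $f$ commutes with permuting columns, every column-permutation of $f[A]$ is SR$(\epsilon)$ as well. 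Now fix rows $i_1<i_2$ and columns $j_1<j_2$: the corresponding $2\times2$ minor $\mu$ of $f[A]$ is either $0$ or has sign $\epsilon_2$, while the same minor computed after transposing columns $j_1$ and $j_2$ equals $-\mu$ and is likewise $0$ or of sign $\epsilon_2$. A real number and its negative both lie in $\{0\}\cup\{t:\sgn t=\epsilon_2\}$ only if that number is $0$, so $\mu=0$. As $m,n\ge2$, picking out a $2\times2$ block yields
\[
f(u_1v_1)\,f(u_2v_2)=f(u_1v_2)\,f(u_2v_1)\qquad\text{for all } u_1,u_2,v_1,v_2>0.
\]

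Next I would reparametrize this identity. For any $P,Q,R,S>0$ with $PQ=RS$ one can solve $u_1v_1=P,\ u_2v_2=Q,\ u_1v_2=R,\ u_2v_1=S$ in positive numbers (e.g.\ $v_2=1,\ v_1=P/R,\ u_1=R,\ u_2=Q$, the condition $PQ=RS$ making the system consistent), so the displayed equation says precisely that $f(P)f(Q)$ depends only on the product $PQ$; taking $Q=1$ gives
\[
f(P)\,f(Q)=f(1)\,f(PQ)\qquad\text{for all } P,Q>0.
\]
If $f(1)=0$, this forces $f\equiv0$ on $(0,\infty)$, and continuity gives $f\equiv0$ on $[0,\infty)$, so the conclusion holds with any $\alpha\ge0$ (also $f\ge0$ in general follows from applying $f[-]$ to constant matrices, so when $f\not\equiv0$ we have $f(1)>0$). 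Otherwise set $g:=f/f(1)$, which is continuous with $g(1)=1$ and $g(PQ)=g(P)g(Q)$; from $g(x)=g(\sqrt{x})^2$ we get $g\ge0$, and $g(x_0)=0$ for some $x_0>0$ would force $g\equiv0$, contradicting $g(1)=1$, so $g>0$ on $(0,\infty)$. Then $\phi(s):=\log g(e^{s})$ is continuous and satisfies Cauchy's equation $\phi(s+t)=\phi(s)+\phi(t)$, so Darboux's theorem (Theorem~\ref{Theorem_Darboux}) gives $\phi(s)=\phi(1)s$, i.e.\ $g(x)=x^{\alpha}$ with $\alpha:=\log g(e)$. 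Hence $f(x)=f(1)x^{\alpha}$ for all $x>0$.

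It remains to pin down $\alpha$ and the value at $0$, which is where continuity of $f$ on the closed half-line is used: if $\alpha<0$ then $|f(x)|=|f(1)|x^{\alpha}\to\infty$ as $x\to0^{+}$, contradicting continuity at $0$, so $\alpha\ge0$; and $f(0)=\lim_{x\to0^{+}}f(1)x^{\alpha}$ equals $0=f(1)\cdot0^{\alpha}$ when $\alpha>0$ and $f(1)=f(1)\cdot0^{0}$ when $\alpha=0$, so the formula $f(x)=f(1)x^{\alpha}$ extends to $x=0$. I expect the one genuinely new ingredient to be the second half of the first paragraph: the SR$(\epsilon)$ hypothesis permits zero minors and so delivers no direct sign information on the $2\times2$ minors of $f[uv^{T}]$, and the column-transposition symmetry of rank-one matrices is exactly what upgrades ``$0$ or of sign $\epsilon_2$'' to ``$=0$''. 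Everything after that -- the reparametrization, the reduction to Cauchy's equation via Darboux, and the limiting argument at the origin -- is standard, with only the routine checks (solvability of the $2\times2$ system and the degenerate case $f(1)=0$) to verify.
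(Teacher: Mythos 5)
Your proof is correct and takes essentially the same route as the paper: your column-swap symmetry of positive rank-one matrices, which forces the $2\times2$ minors of $f[uv^{T}]$ to vanish, is precisely the paper's pair $A(x,y)$ and $B(x,y)=A(x,y)P_2$, and both arguments then pass through the identity $f(x)f(y)=f(1)f(xy)$, the dichotomy on $f(1)$, positivity of $f$ on $(0,\infty)$, the logarithmic substitution, and Darboux's theorem. The handling of $\alpha\ge0$ and of $f(0)$ via continuity at the origin also matches the paper's.
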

	\begin{proof}
		We begin by considering $m=n=2$. Define
		\begin{align*}
			A(x,y):=\begin{pmatrix}
				x & xy \\
				1 & y
			\end{pmatrix},\hspace{.3cm} B(x,y):=\begin{pmatrix}
				xy & x \\
				y & 1
			\end{pmatrix}, \quad x,y\geq0.
		\end{align*}
		Clearly, both $A(x,y)$ and $B(x,y)$ are SR$(\epsilon)$. Since $f[-]$ preserves the sign regularity of these matrices, $\epsilon_2\det f[A(x,y)], \epsilon_2\det f[B(x,y)] \geq0$. These imply
		\begin{equation}\label{SR2_f_relation}
			f(x)f(y)=f(1)f(xy), \quad \forall ~ x,y\geq0.
		\end{equation}
		Since $\epsilon_1=1$, $f(1)\geq0$. We now consider two cases.
		\begin{itemize}[leftmargin=0pt,label={}]
			\item \textbf{Case 1.} $f(1)=0$: Setting $x=y\geq0$ in~\eqref{SR2_f_relation}, we deduce that $f(x)=0$. So $f\equiv0$ on $[0,\infty)$.
			
			\item \textbf{Case 2.} $f(1)>0$: We first show $f$ is positive on $(0,\infty)$. Note that $f(x)\geq0$ for all $x>0$, which follows by considering the $2\times2$ SR$(\epsilon)$ matrix $(x)_{1\times1}\oplus0_{1\times1}$. Suppose, for contradiction, that $f(x_0)=0$ for some $x_0>0$. From equation~\eqref{SR2_f_relation} with $x=x_0$, $y=\frac{1}{x_0}$, we obtain 
			\[0=f(x_0)f\left(\frac{1}{x_0}\right)= f(1)^2 >0,\] 
			a contradiction. Hence, $f(x)>0$ for all $x\in(0,\infty)$. Now define the functions
			\begin{align}\label{SR2_g_h_define}
				g(x):=\frac{f(x)}{f(1)}, ~ x>0, \qquad h(y):=\log g(e^y), ~ y\in\mathbb{R}.
			\end{align}
			Since $f$ is positive and continuous on $(0,\infty)$, the function $h$ is well-defined and continuous. Using~\eqref{SR2_f_relation}~and~\eqref{SR2_g_h_define}, we have the following relations:
			\begin{align*}
				g(xy) = g(x)g(y), ~\forall ~ x,y>0, \qquad
				h(a+b) = h(a)+h(b), ~\forall ~ a,b\in\mathbb{R}.
			\end{align*}
			Since $h$ is continuous and satisfies the Cauchy functional equation, by Theorem~\ref{Theorem_Darboux}, $h(y)=h(1)y$ for all $y\in\mathbb{R}$. Thus
			\begin{align*}%\label{h(y)}
				\log g(e^y)=h(1)y, \quad \forall ~ y\in\mathbb{R}.
			\end{align*}
			Taking $e^y=x$ above and using \eqref{SR2_g_h_define}, we obtain the following for all $x>0$:
			\begin{align*}
				\log g(x) = h(1)\log x
				\implies f(x) = f(1)x^{h(1)}.
			\end{align*}
			Finally, continuity of $f$ at $x=0$ implies $h(1)\geq0$. Therefore, $f(x)=f(1)x^{\alpha}$ for all $x>0$, with $\alpha:=h(1)\geq0$. We now determine the value of $f(0)$ using the continuity of $f$.
			\begin{itemize}
				\item[(i)] If $\alpha=0$, then $f(x)=f(1)$ for all $x>0$. To ensure continuity of $f$ at $x=0$, we must have $f(0)=f(1)$.
				
				\item[(ii)] If $\alpha>0$, then $\displaystyle{\lim_{x\to0^+}}f(x)=0$, so $f(0)=0$. Thus, $f(x)=f(1)x^{\alpha}$ for all $x\geq0$.
			\end{itemize}
		\end{itemize}
		To extend this result to arbitrary $m,n\geq2$, note that one can embed $A(x,y)$ and $B(x,y)$ into $m\times n$ SR$(\epsilon)$ matrices by padding them with zeros. Thus, the hypotheses imply that $f[-]$ preserves the sign regularity of the submatrices $A(x,y)$ and $B(x,y)$, and the result follows.
	\end{proof}
	
	Our next objective is to prove that for any fixed integers $m,n\geq2$, every entrywise map preserving $m\times n$ SR$(\epsilon)$ matrices with non-negative entries must be continuous on $(0,\infty)$. To show this, we first present several intermediate results.
	
	\begin{theorem}\label{Theorem_SR_Fixed_1_nonnegative_nondecreasing}
		Let $f:[0,\infty)\to\mathbb{R}$ and $\epsilon=(\epsilon_1,\epsilon_2)$ be a given sign pattern with $\epsilon_1=1$. Then the following statements are equivalent.
		\begin{itemize}
			\item[(1)] $f[-]$ preserves the set of $2\times2$ SR($\epsilon$) matrices with $\epsilon_1=\epsilon_2$ of the form
			\begin{align*}
				\begin{pmatrix}
					a & b \\
					b & c
				\end{pmatrix}, ~ \text{where} ~ a,b,c\geq0 ~ \text{and} ~ 0\leq b\leq\sqrt{ac}.
			\end{align*}
			
			\item[(2)] $f[-]$ preserves the set of $2\times2$ SR($\epsilon$) matrices with $\epsilon_1\neq\epsilon_2$ of the form
			\begin{align*}
				\begin{pmatrix}
					b & a \\
					c & b
				\end{pmatrix}, ~ \text{where} ~ a,b,c\geq0 ~ \text{and} ~ 0\leq b\leq\sqrt{ac}.
			\end{align*}
			
			\item[(3)] $f$ is non-negative, non-decreasing, and multiplicatively mid-convex on $[0,\infty)$; that is,
			\begin{align*}
				f(\sqrt{xy})\leq\sqrt{f(x)f(y)}, \quad \forall ~ x,y\geq0.
			\end{align*}
			Moreover, $f$ is either identically zero or never zero on $(0,\infty)$.
		\end{itemize}
		This theorem also holds if the domain $[0,\infty)$ is replaced by $(0,\infty)$.
	\end{theorem}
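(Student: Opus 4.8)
\emph{Proof plan.} The plan is to unravel statement~(1) into a few pointwise conditions on $f$ by feeding $f[-]$ a handful of explicit $2\times2$ test matrices, and then to read off~(3); conversely~(3) will yield~(1) by a direct check, and the equivalence of~(1) and~(2) will come for free from the $P_2$-symmetry recorded in Remark~\ref{Remark_SSR_3}(iii). The starting point I would use is the elementary fact that a matrix $\left(\begin{smallmatrix} a & b \\ b & c\end{smallmatrix}\right)$ with $a,b,c\ge0$ lies in the family appearing in~(1) exactly when $b\le\sqrt{ac}$ (its entries are automatically non-negative and this inequality is precisely $\det\ge0$, i.e.\ $\mathrm{SR}(1,1)$), while $f[-]$ sends it into $\mathrm{SR}(1,1)$ if and only if $f(a),f(b),f(c)\ge0$ and $f(b)^{2}\le f(a)f(c)$.

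To prove (1)$\Rightarrow$(3) I would extract the four assertions in turn. Non-negativity of $f$ is immediate: every $x\ge0$ occurs as an entry of a member of the family (take $a=x$, $b=c=0$), and the $1\times1$ minors of an $\mathrm{SR}$ matrix with $\epsilon_1=1$ are non-negative. For monotonicity, given $0\le x\le y$ I would apply $f[-]$ to $\left(\begin{smallmatrix} y & x \\ x & y\end{smallmatrix}\right)$, which is in the family since $x\le\sqrt{y\cdot y}$; the determinant condition yields $f(x)^2\le f(y)^2$, hence $f(x)\le f(y)$. For multiplicative mid-convexity, given $x,y\ge0$ I would apply $f[-]$ to $\left(\begin{smallmatrix} x & \sqrt{xy} \\ \sqrt{xy} & y\end{smallmatrix}\right)$, a member of the family with $b=\sqrt{ac}$; here the determinant condition reads exactly $f(\sqrt{xy})^2\le f(x)f(y)$. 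Finally, for the zero/non-zero dichotomy, if $f(x_0)=0$ for some $x_0>0$ then for an arbitrary $b\ge0$ I would pick $c\ge b^2/x_0$, so that $\left(\begin{smallmatrix} x_0 & b \\ b & c\end{smallmatrix}\right)$ lies in the family, and the determinant condition forces $f(b)^2\le f(x_0)f(c)=0$; thus $f\equiv0$.

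For (3)$\Rightarrow$(1): given any $M=\left(\begin{smallmatrix} a & b \\ b & c\end{smallmatrix}\right)$ in the family, $f[M]$ has non-negative entries by hypothesis, and its determinant $f(a)f(c)-f(b)^2$ is non-negative since $f(b)\le f(\sqrt{ac})\le\sqrt{f(a)f(c)}$ by monotonicity (using $b\le\sqrt{ac}$) followed by multiplicative mid-convexity; hence $f[M]$ is $\mathrm{SR}(1,1)$. For (1)$\iff$(2): the map $A\mapsto AP_2$ is a bijection of the family in~(1) onto the family in~(2) which carries $\mathrm{SR}(1,1)$ to $\mathrm{SR}(1,-1)$ (because $\det P_2=-1$) and satisfies $f[A]P_2=f[AP_2]$, so by Remark~\ref{Remark_SSR_3}(iii) the two preservation properties coincide; alternatively, unravelling~(2) produces literally the same pointwise inequalities on $f$. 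The assertion for domain $(0,\infty)$ follows by the identical argument with all test matrices taken to have strictly positive entries (for non-negativity use small $b,c>0$; the other three test matrices already have positive entries whenever $x,y,x_0,b,c>0$).

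I do not anticipate a serious obstruction here: the whole content lies in choosing the three families of test matrices — $\left(\begin{smallmatrix} y & x \\ x & y\end{smallmatrix}\right)$, the geometric-mean matrix, and $\left(\begin{smallmatrix} x_0 & b \\ b & c\end{smallmatrix}\right)$ with $c$ large — and in checking the degenerate cases $a=0$ or $c=0$ in (3)$\Rightarrow$(1), where $b$ is forced to be $0$ and the chain $f(b)\le f(\sqrt{ac})\le\sqrt{f(a)f(c)}$ degenerates to $f(0)^2\le f(0)f(c)$, which still holds by monotonicity.
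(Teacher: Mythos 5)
Your proof is correct, and its skeleton matches the paper's: the paper runs the cycle (1)$\Rightarrow$(2)$\Rightarrow$(3)$\Rightarrow$(1), extracting monotonicity and multiplicative mid-convexity from the matrices $\left(\begin{smallmatrix}b&a\\a&b\end{smallmatrix}\right)$ and $\left(\begin{smallmatrix}\sqrt{ac}&a\\c&\sqrt{ac}\end{smallmatrix}\right)$, which are exactly the $P_2$-images of your two test matrices, and it verifies (3)$\Rightarrow$(1) by the same direct determinant estimate. The one genuine divergence is the zero/non-zero dichotomy. The paper first deduces $f\equiv 0$ on $(0,x_0]$ from monotonicity and then propagates the vanishing rightward by an induction along a geometric progression $x_k = x_0(y/x_0)^{k/n}$, chaining the inequalities $f(x_{i+1})\le\sqrt{f(x_i)f(x_{i+2})}$ coming from the matrices $\left(\begin{smallmatrix}x_{i+1}&x_i\\x_{i+2}&x_{i+1}\end{smallmatrix}\right)$. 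You instead observe that for any $b\ge 0$ one may choose $c\ge b^2/x_0$ so that $\left(\begin{smallmatrix}x_0&b\\b&c\end{smallmatrix}\right)$ lies in the admissible family, whence $f(b)^2\le f(x_0)f(c)=0$ in a single step; this is shorter and in fact stronger (it forces $f\equiv 0$ on all of $[0,\infty)$ at once), and it is legitimate because the family in (1) allows arbitrary $a,b,c\ge 0$ with $b\le\sqrt{ac}$, so the vanishing point may be placed in the $(1,1)$-entry rather than on the anti-diagonal. Your treatment of the degenerate cases $a=0$ or $c=0$ in (3)$\Rightarrow$(1), and of the $(0,\infty)$-domain variant via strictly positive test matrices, is also sound.
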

	\begin{proof}
		We establish the cyclic chain of implications: (1)$\implies$(2)$\implies$(3)$\implies$(1). Note that (1)$\implies$(2) is immediate by Remark~\ref{Remark_SSR_3}(iii). To show (2)$\implies$(3), let $\epsilon_1\neq\epsilon_2$ and assume that $f[-]$ preserves all $2\times2$ SR$(\epsilon)$ matrices of the form $\begin{pmatrix}
			b & a \\
			c & b
		\end{pmatrix}$. Consider the matrix
		\begin{align*}%\label{A_1_pos_inc}
			A_1=\begin{pmatrix}
				b & a \\
				a & b
			\end{pmatrix}, \quad 0\leq b\leq a.
		\end{align*} 
		Note that $A_1$ is SR with $\epsilon_1\neq\epsilon_2$. By hypotheses, $f[A_1]$ is SR$(\epsilon)$ and the determinant of $f[A_1]$ yields
		\begin{align*}%\label{f_inc_pos_MMC_relation}
			0\leq f(b)\leq f(a), ~ \text{whenever} ~ 0\leq b\leq a.
		\end{align*}
		This shows that $f$ is non-decreasing and non-negative on $[0,\infty)$. Next, to show that $f$ is multiplicatively mid-convex, consider the matrix
		\begin{align*}
			A_2 = \begin{pmatrix}
				\sqrt{ac} & a \\
				c & \sqrt{ac}
			\end{pmatrix}, \quad a,c\geq0.
		\end{align*}
		Then $A_2$ is SR$(\epsilon)$ and so is $f[A_2]$. Thus, the determinant of $f[A_2]$ gives \[f(\sqrt{ac})\leq\sqrt{f(a)f(c)}, \quad \forall ~ a,c\geq0.\]
		Thus, $f$ is multiplicatively mid-convex on $[0,\infty)$.
		
		Next, we claim that $f$ is either identically zero or never zero on $(0,\infty)$. To show this, let $I^+ = (0,\rho)$ for some $0<\rho\leq\infty$. Suppose that $f(x)=0$ for some $x\in I^+$. Since $f$ is non-negative and non-decreasing on $I^+$, $f\equiv0$ on $(0,x]$. It remains to show that $f(y)=0$ for all $x<y<\rho$. Let $y\in(x,\rho)$ and choose large $n$ such that $y\left(\frac{y}{x}\right)^{\frac{1}{n}}<\rho$. Define
		\begin{align*}
			x_k:=x\left(\frac{y}{x}\right)^{\frac{k}{n}} ~ \text{for} ~ k=0,1,\ldots,n+1.
		\end{align*}
		Then, $x_k\in I^+$ for all $k$ and $x_0<x_1<\cdots<x_{n+1}$. Now, for each $i=0,1,\ldots,n-1$, define an SR$(\epsilon)$ matrix
		\begin{align*}
			A_i:=\begin{pmatrix}
				x_{i+1} & x_{i} \\
				x_{i+2} & x_{i+1}
			\end{pmatrix}.
		\end{align*}
		By assumption, $f[A_i]$ is SR$(\epsilon)$ with $\epsilon_1\neq\epsilon_2$ for all $0\leq i\leq n-1$. Taking their determinant yields
		\begin{align*}
			0\leq f(x_{i+1}) \leq\sqrt{f(x_i)f(x_{i+2})}, \quad 0\leq i\leq n-1.
		\end{align*}
		Using induction and the fact that $f(x_0)=f(x)=0$, it follows that $f(x_i)=0$ for all $0\leq i\leq n$. In particular, $f(x_{n})=f(y)=0$. Thus $f\equiv0$ on $I^+$.
		
		Finally, we show that (3)$\implies$(1). Let $\epsilon_1=\epsilon_2$ and consider the following SR$(\epsilon)$ matrix
		\begin{align*}
			A_3 = \begin{pmatrix}
				a & b \\
				b & c
			\end{pmatrix}.
		\end{align*}
		This gives the following conditions on their entries: $a,b,c\geq0 ~ \text{and} ~ 0\leq b\leq\sqrt{ac}$. Since $f$ is non-negative, non-decreasing, and multiplicatively mid-convex on $[0,\infty)$, we have
		\[f(a), f(b), f(c)\geq0; \qquad f(b)\leq f(\sqrt{ac})\leq\sqrt{f(a)f(c)}.\] 
		This shows that $f[A_3]$ is SR$(\epsilon)$.
	\end{proof}
	
	The preceding result shows that any function that entrywise preserves the set of $2\times2$ SR$(\epsilon)$ matrices with $\epsilon_1=1$ must be non-negative, non-decreasing, and multiplicatively mid-convex on $[0,\infty)$. Moreover, such a function is either identically zero on $(0,\infty)$ or positive on $(0,\infty)$. A classical result of Ostrowski~\cite{Ostrowski29} shows that any such function is continuous on $(0,\infty)$; for completeness, we recall this result below.
	
	\begin{theorem}\cite[Ostrowski]{Ostrowski29} \label{Theorem_SR_Fixed_1_continuous}
		Let $f:(0,\infty)\to\mathbb{R}$ be a function that is positive, non-decreasing, and multiplicatively mid-convex. Then $f$ is continuous.
	\end{theorem}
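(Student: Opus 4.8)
The plan is to pass to additive coordinates via the exponential map, reducing the statement to the classical fact that a non-decreasing midpoint-convex function on $\mathbb{R}$ is continuous. Since $f$ is positive on $(0,\infty)$, the function $g:\mathbb{R}\to\mathbb{R}$ defined by $g(t):=\log f(e^t)$ is well-defined and real-valued, and since $t\mapsto e^t$ and $\log$ are increasing, $g$ inherits monotonicity: $g$ is non-decreasing. Substituting $x=e^a$, $y=e^b$ into the multiplicative mid-convexity inequality $f(\sqrt{xy})\le\sqrt{f(x)f(y)}$ and taking logarithms yields $g\!\left(\frac{a+b}{2}\right)\le\frac{g(a)+g(b)}{2}$ for all $a,b\in\mathbb{R}$; that is, $g$ is midpoint-convex. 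Once we know $g$ is continuous, $f(x)=\exp\!\big(g(\log x)\big)$ is a composition of continuous maps on $(0,\infty)$, so the whole problem reduces to proving continuity of $g$.

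First I would record the dyadic Jensen inequality: iterating midpoint convexity gives, for every $n\ge1$, every $k\in\{0,1,\dots,2^n\}$, and all $u,v\in\mathbb{R}$,
\[
g\!\left(\tfrac{k}{2^n}u+\big(1-\tfrac{k}{2^n}\big)v\right)\ \le\ \tfrac{k}{2^n}g(u)+\big(1-\tfrac{k}{2^n}\big)g(v),
\]
an easy induction on $n$. Fix $t_0\in\mathbb{R}$; since $g$ is monotone the one-sided limits exist with $g(t_0^-)\le g(t_0)\le g(t_0^+)$ and $g(t_0^+)=\inf_{t>t_0}g(t)$. Choosing any $b>t_0$ and applying the displayed inequality with $u=t_0$, $v=b$, $k=2^n-1$: the argument $t_0+2^{-n}(b-t_0)$ decreases to $t_0$, so its $g$-value is $\ge g(t_0^+)$, while the right-hand side tends to $g(t_0)$. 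Letting $n\to\infty$ gives $g(t_0^+)\le g(t_0)$, hence $g(t_0^+)=g(t_0)$: $g$ is right-continuous at every point.

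It remains to upgrade right-continuity to full continuity, which uses plain midpoint convexity once more in the opposite direction. For $a<t_0$ write $g(t_0)=g\!\left(\frac{a+(2t_0-a)}{2}\right)\le\frac{g(a)+g(2t_0-a)}{2}$; as $a\uparrow t_0$ we have $g(a)\to g(t_0^-)$ and $2t_0-a\downarrow t_0$, so $g(2t_0-a)\to g(t_0^+)=g(t_0)$ by the right-continuity just established. Therefore $g(t_0)\le\frac{g(t_0^-)+g(t_0)}{2}$, i.e.\ $g(t_0)\le g(t_0^-)$, which together with $g(t_0^-)\le g(t_0)$ forces $g(t_0^-)=g(t_0)$. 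Thus $g$ is continuous at the arbitrary point $t_0$, and composing with $\exp$ and $\log$ yields continuity of $f$ on $(0,\infty)$.

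The argument has no serious obstacle, but the point deserving care is the ordering of the two continuity steps: the dyadic Jensen inequality only bounds $g$ from above along sequences, so by itself it pins down only the right-hand limit (via monotonicity), and the left-hand limit must then be handled by a separate bootstrap that feeds the already-proved right-continuity back into the midpoint-convexity inequality. Alternatively, one may simply invoke the general principle that a midpoint-convex function which is locally bounded above---in particular, a monotone one---is convex, hence continuous; the argument above is the self-contained specialization of that principle to the present setting.
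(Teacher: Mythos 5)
Your argument is correct. Note that the paper does not prove this statement at all: it is quoted as a classical result of Ostrowski with a citation, so there is no in-paper proof to compare against. Your write-up is a valid self-contained derivation of that cited fact: the passage to $g(t)=\log f(e^t)$ legitimately converts positivity, monotonicity, and multiplicative mid-convexity into monotonicity plus midpoint convexity on $\mathbb{R}$; the dyadic Jensen inequality with $k=2^n-1$ correctly forces $g(t_0^+)\le g(t_0)$ (using $g(t_0^+)=\inf_{t>t_0}g(t)$ for non-decreasing $g$); and the bootstrap $g(t_0)\le\tfrac12\bigl(g(t_0^-)+g(t_0^+)\bigr)$ combined with the just-established right-continuity pins down the left limit. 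Your closing remark is also accurate: the same conclusion follows from the Bernstein--Doetsch principle that a midpoint-convex function bounded above on an interval (as any monotone function locally is) is convex, hence continuous on the open interval; your proof is the elementary specialization of that principle, and either route would serve the paper's purposes.
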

	
	Before proceeding to the proof of our first main result, we recall a few definitions and a key result.
	
	\begin{defn}
		Let $A=(a_{ij})$ be an $m\times n$ matrix.
		\begin{itemize}
			\item[(i)] We say that $A$ is in \textit{double echelon form} if the following conditions hold:
			\begin{itemize}
				\item[$\bullet$] Each row of $A$ is of one of the following forms:
				\[(*,\ldots,*), ~ (0,\ldots,0,*,\ldots,*), ~ (*,\ldots,*,0,\ldots,0), ~ \text{or} ~ (0,\ldots,0,*,\ldots,*,0,\ldots,0).\]
				Here $*$ denotes a non-zero entry of $A$.
					
				\item[$\bullet$] For each $i=1,\ldots,m-1$, the first and last non-zero entries in row $i+1$ are not to the left of those in row $i$, respectively.
			\end{itemize}
				
			\item[(ii)] An $m\times n$ \textit{$(0,+)$-pattern} is an $m\times n$ array whose entries belong to $\{0,+\}$.
				
			\item[(iii)] Let $S=(s_{ij})$ be an $m\times n$ $(0,+)$-pattern. A \textit{realization} of $S$ is an $m\times n$ matrix $A$ such that
			\begin{align*}
				a_{ij}>0 ~ \text{if} ~ s_{ij}=+, ~ \text{and} ~ a_{ij}=0 ~ \text{if} ~ s_{ij}=0.
			\end{align*}
				
			\item[(iv)] A $(0,+)$-pattern \textit{allows property $P$} if there exists a realization of $(0,+)$-pattern with property $P$.
		\end{itemize}
	\end{defn}
		
	We next recall a key result that identifies the precise locations of zero entries in matrices whose minors of size up to $2\times2$ are all non-negative. Such matrices are called \textit{totally non-negative matrices of order $2$} (TN$_2$).
		
	\begin{theorem}\cite[Theorem 1.6.4]{fallat-john}\label{TP2_NE_SW} 
		Let $m,n\geq2$ and let $S$ be an $m\times n$ $(0,+)$-pattern with no zero rows or columns. Then $S$ allows a TN$_2$ matrix if and only if $S$ is a double echelon pattern.
	\end{theorem}
	
	Equipped with the preceding results, we are now ready to prove Theorem~\ref{Theorem_SR_Fixed_Classification}.
	
	\begin{proof}[Proof of Theorem~\ref{Theorem_SR_Fixed_Classification}]
		We show the equivalences (1)$\iff$(2) and (2)$\iff$(3), which together complete the proof. The implication (1)$\implies$(2) follows by embedding any $d\times d$ SR$(\epsilon)$ matrix into an $m\times n$ SR$(\epsilon)$ matrix by adding zero rows or columns. The converse implication, (2)$\implies$(1), is immediate.
		
		We next prove (3)$\implies$(2).	Let $\epsilon=(\epsilon_1,\ldots,\epsilon_d)$ with $\epsilon_1=1$. The case $d=1$ is trivial. For $d\geq2$, the classification of power functions that preserve $d\times d$ SR$(\epsilon)$ matrices is given in Theorem~\ref{Theorem_SR_power_classification}.
		
		Next, we determine the sign patterns $\epsilon$ for which the signum function entrywise preserves $d\times d$ SR$(\epsilon)$ matrices for all $d\geq2$. Let $c>0$ and define $f(x):=c\sgn(x)$ for $x\geq0$. This function clearly preserves all $2\times 2$ SR$(\epsilon)$ matrices. However, it fails to preserve $d\times d$ SR$(\epsilon)$ matrices when $\epsilon_2=\epsilon_3$ for all $d\geq3$. For instance, let $d=3$ and take the matrix $A$ equal to
		\begin{align*}
			\text{either} ~ \begin{pmatrix}
				3 & 1 & 0 \\
				1 & 1 & 1 \\
				0 & 2 & 4 
			\end{pmatrix} ~ \text{or} ~ \begin{pmatrix}
				3 & 1 & 0 \\
				1 & 1 & 1 \\
				0 & 2 & 4 
			\end{pmatrix}\cdot P_3.
		\end{align*}
		Then $A$ is SR$(\epsilon)$ with $\epsilon_2=\epsilon_3$, but $\epsilon_3\det f[A]<0$. Extending $A$ by padding with zeros produces counterexamples for all $d\geq4$, showing that the signum function fails to preserve SR$(\epsilon)$ matrices whenever $\epsilon_2=\epsilon_3$.
		
		Let $\epsilon_2\neq\epsilon_3$. We claim that $f[-]$ preserves all $d\times d$ SR$(\epsilon)$ matrices for $d\geq3$. We first consider $d=3$. Let $A$ be any $3\times3$ SR$(\epsilon)$ matrix. If all entries of $A$ are positive, then $\det f[A]=0$, and the matrix $f[A]$ is SR$(\epsilon)$. Let $A$ be a matrix with at least one zero entry and let $\det f[A]\neq0$. We claim that $\epsilon_3\det f[A]>0$. Without loss of generality assume that $\epsilon_2=1$, otherwise one can replace $A$ by $AP_3$. Since $A$ is TN$_2$, we apply Theorem~\ref{TP2_NE_SW} to derive the following constraints on the entries $a_{ij}$ of $A$.
		\begin{itemize}
			\item[(i)] If $a_{11}=0$, then either the first row or first column of $A$ is entirely zero, leading to $\det f[A]=0$. Thus, $a_{11}\neq0$. Similarly, $a_{33}\neq0$.
			
			\item[(ii)] If $a_{12}=0$, then $a_{13}=0$ (otherwise, the second column of $A$ is zero, and thus $\det f[A]=0$), and the submatrix
			\begin{align}\label{2by2submatrix}
				\begin{pmatrix}
					a_{22} & a_{23} \\
					a_{32} & a_{33}
				\end{pmatrix}
			\end{align} 
			of $A$ must contain at least one zero entry since $\det f[A]\neq0$. If $a_{22}=0$, then either $a_{23}=0$ or $a_{21}=a_{31}=a_{32}=0$. In both cases, $\det f[A]=0$. If $a_{23}=0$, then $\epsilon_3\det A<0$, which is not true. Hence, we must have $a_{22}\neq0$ and $a_{23}\neq0$, and the only possible candidate for a zero entry is $a_{32}$. However, this configuration again leads to $\epsilon_3\det A<0$, which is a contradiction. Therefore, $a_{12}\neq0$.
			
			\item[(iii)] If $a_{21}=0$, then $a_{31}=0$, and the $2\times2$ submatrix of $A$ in \eqref{2by2submatrix} must contain a zero entry since $\det f[A]\neq0$. If $a_{22}=0$, then $a_{32}=0$, which gives $\det f[A]=0$. Hence, $a_{22}\neq0$. If either $a_{23}=0$ or $a_{32}=0$, both cases yield $\det A>0$, contradicting $\epsilon_2\neq\epsilon_3$. Therefore, $a_{21}\neq0$, and since $a_{12}\neq0$, we also have $a_{22}\neq0$.
			
			\item[(iv)] If $a_{23}=0$, then $a_{13}=0$ and $\det f[A]=0$, a contradiction. Thus, $a_{23}\neq0$. Similarly, if $a_{32}=0$, then $a_{31}=0$, and again $f[A]$ becomes singular.
		\end{itemize}
		Therefore, all entries of $A$ must be non-zero except possibly $a_{13}$ and $a_{31}$. If exactly one of these is zero, then $\det f[A]=0$. Thus both $a_{13}=0$ and $a_{31}=0$. Then $\det f[A]\neq0$, and $\epsilon_3\det f[A]>0$. This shows the claim for $d=3$.
		
		Let $d=4$ and $A$ be any $4\times4$ SR$(\epsilon)$ matrix. We claim that $f[A]$ has rank at most $3$, and therefore $\det f[A]=0$, implying that $f[-]$ preserves all such SR$(\epsilon)$ matrices. The same reasoning applies to all $d\geq4$, completing the argument that the signum function entrywise preserves $d\times d$ SR$(\epsilon)$ matrices whenever $\epsilon_2\neq\epsilon_3$.
		
		Using Remark~\ref{Remark_SSR_3}(iii), without loss of generality assume that $\epsilon_2=1$. Suppose for contradiction that $\det f[A]\neq0$. Then, $f[A]$ must contain a $3\times3$ submatrix of the following form -- this follows from the $d=3$ case:
		\begin{align}\label{Signum_nonzero_3x3}
			\begin{pmatrix}
				c & c & 0 \\
				c & c & c \\
				0 & c & c
			\end{pmatrix}.
		\end{align}
		Since $f[A]$ is SR$(\epsilon)$ and $\epsilon_1=\epsilon_2=1$, $f[A]$ is TN$_2$. Using Theorem~\ref{TP2_NE_SW} and computing the determinant of $f[A]=(f(a_{ij}))$ along the first row, we make the following observations:
		\begin{itemize}
			\item[(i)] If $f(a_{11})=0$, then either the first row or first column of $f[A]$ must be entirely zero, leading to $\det f[A]=0$. Thus $f(a_{11})\neq0$.
			
			\item[(ii)] If $f(a_{12})=0$, since $\det f[A]\neq0$, by Theorem~\ref{TP2_NE_SW}, we must have $f(a_{13})=f(a_{14})=0$. For $\det f[A]\neq0$, the matrix in~\eqref{Signum_nonzero_3x3} must be the principal submatrix of $f[A]$ indexed by rows and columns $\{2,3,4\}$. This, in turn, implies that the $3\times3$ submatrix of $f[A]$ formed by rows $\{1,2,3\}$ and columns $\{1,3,4\}$ is upper triangular with non-zero diagonal entries, and therefore has a positive determinant, contradicting the assumption $\epsilon_2\neq\epsilon_3$. Hence, $f(a_{12})\neq0$.
			
			\item[(iii)] If $f(a_{13})=0$, by a similar argument as above $f(a_{14})=0$. Since $\det f[A]\neq0$, the invertible $3\times3$ matrix in~\eqref{Signum_nonzero_3x3} must be a submatrix of $f[A]$. Hence, $f[A]$ takes one of the forms:
			\begin{align*}
				\begin{pmatrix}
					c & c & 0 & 0 \\
					* & c & c & 0 \\
					* & c & c & c \\
					* & 0 & c & c
				\end{pmatrix} \quad \text{or} \quad \begin{pmatrix}
					c & c & 0 & 0 \\
					c & * & c & 0 \\
					c & * & c & c \\
					0 & * & c & c
				\end{pmatrix}, \quad \text{where}~*\in\{0,c\}.
			\end{align*}
			In both configurations, the $3\times3$ submatrix occupying the upper-right corner of $A$ has a positive determinant. This contradicts the assumption $\epsilon_2\neq\epsilon_3$, so we conclude that $f(a_{13})\neq0$.
			
			\item[(iv)] If $f(a_{14})=0$, then by adapting the same argument as in (iii) above, we conclude that the structure of $f[A]$ must be one of the following:
			\begin{align*}
				\begin{pmatrix}
					c & c & c & 0 \\
					* & c & c & 0 \\
					* & c & c & c \\
					* & 0 & c & c
				\end{pmatrix} \quad \text{or} \quad \begin{pmatrix}
					c & c & c & 0 \\
					c & * & c & 0 \\
					c & * & c & c \\
					0 & * & c & c
				\end{pmatrix} \quad \text{or} \quad \begin{pmatrix}
					c & c & c & 0 \\
					c & c & * & 0 \\
					c & c & * & c \\
					0 & c & * & c
				\end{pmatrix}, \quad \text{where}~*\in\{0,c\}.
			\end{align*}
			Since $f[A]$ is TN$_2$, applying Theorem~\ref{TP2_NE_SW} to complete these matrices, we obtain
			\begin{align*}
				\begin{pmatrix}
					c & c & c & 0 \\
					* & c & c & 0 \\
					* & c & c & c \\
					0 & 0 & c & c
				\end{pmatrix} \quad \text{or} \quad \begin{pmatrix}
					c & c & c & 0 \\
					c & c & c & 0 \\
					c & c & c & c \\
					0 & * & c & c
				\end{pmatrix} \quad \text{or} \quad \begin{pmatrix}
					c & c & c & 0 \\
					c & c & c & 0 \\
					c & c & c & c \\
					0 & c & c & c
				\end{pmatrix}, \quad \text{where}~*\in\{0,c\}.
			\end{align*}
			The final matrix has two identical columns, so $\det f[A]=0$. In the second matrix, regardless of whether $*=0$ or $c$, two columns again become identical, implying $\det f[A]=0$. In the first case, if $f(a_{21})=0$, then $f(a_{31})=0$, which forces the $3\times3$ submatrix of $A$ indexed by rows $\{1,2,3\}$ and columns $\{1,2,4\}$ to have a positive determinant, a contradiction. Hence, $f(a_{21})\neq0$. Similarly, if $f(a_{31})=0$, the lower-left $3\times3$ submatrix of $A$ becomes upper triangular with positive diagonal entries, again violating the required sign condition. Thus $f(a_{31})\neq0$, and $\det f[A]=0$, a contradiction.
		\end{itemize}
		Thus, $f(a_{11})=f(a_{12})=f(a_{13})=f(a_{14})\neq0$. Since $\det f[A]\neq0$, $f[A]$ contains a $3\times3$ submatrix of the form~$\eqref{Signum_nonzero_3x3}$ below the first row. One can verify that the matrix $A$ corresponding to every possible configuration of $f[A]$ contains at least one $2\times2$ negative minor, which contradicts the assumption. Thus, $\det f[A]=0$. This shows that the signum function, applied entrywise, preserves all $d\times d$ SR$(\epsilon)$ matrices whenever $\epsilon_2\neq\epsilon_3$ for $d\geq4$.
		
		Finally, we prove (2)$\implies$(3). The case $d=1$ is trivial. For $d\geq2$, we assert that the only functions that entrywise preserve $d\times d$ SR$(\epsilon)$ matrices are either power functions or the signum function. This combined with Theorem~\ref{Theorem_SR_power_classification} and the proof of (3)$\implies$(2), completes the proof.
		
		Consider first the case $d=2$. By Theorem~\ref{Theorem_SR_Fixed_1_nonnegative_nondecreasing}, the function $f$ must be non-negative, non-decreasing, and multiplicatively mid-convex on $[0,\infty)$. Moreover, Theorem~\ref{Theorem_SR_Fixed_1_continuous} ensures that $f$ is continuous on $(0,\infty)$. Following a similar argument to that in Theorem~\ref{Theorem_SR_Fixed_1_continuous_power}, we conclude that
		\begin{align}\label{Theorem_A_fx_fy=f1_fxy}
			f(x)f(y) = f(1)f(xy), \quad \forall ~ x,y\geq0.
		\end{align}
		Moreover, either $f\equiv0$ on $[0,\infty)$ or $f(x)=f(1)x^{\alpha}$ with $f(1)>0$, $\alpha\geq0$, and all $x>0$. Let $f$ be a non-zero function. Then $f(x)=f(1)x^{\alpha}$ as above. We now determine the value of $f(0)$, noting that $f(0)\geq0$ since $\epsilon_1=1$.
		\begin{itemize}[leftmargin=0pt,label={}]
			\item \textbf{Case 1.} $f(0)>0$: Substituting $x=0$ and $y>0$ into \eqref{Theorem_A_fx_fy=f1_fxy} yields $f(y) = f(1)$ for all $y>0$, implying $\alpha=0$. Substituting $x=y=0$ gives $f(0)=f(1)$. Hence, $f(x)=f(1)$ for all $x\geq0$.
			
			\item \textbf{Case 2.} $f(0)=0$: If $\alpha>0$, then $f(x)=f(1)x^{\alpha}$ for all $x\geq0$, and $f$ is continuous on $[0,\infty)$. The permissible values of $\alpha$ have already been classified in Theorem~\ref{Theorem_SR_power_classification}. If $\alpha=0$, then $f(x)=f(1)$ for all $x>0$, and with $f(0)=0$, we obtain the (scaled) signum function. The sign patterns for which the signum function preserves SR$(\epsilon)$ matrices are classified in the preceding implication.
		\end{itemize}
		For $d\geq3$, by embedding each $2\times2$ SR matrix considered in Theorem~\ref{Theorem_SR_Fixed_1_nonnegative_nondecreasing} into a $d\times d$ SR$(\epsilon)$ matrix by padding with zeros, it follows that $f[-]$ preserves all $2\times2$ SR$(\epsilon)$ matrices. Thus, $f$ is non-negative, non-decreasing, and multiplicatively mid-convex on $[0,\infty)$. Proceeding as in the proof for the case $d=2$, the only functions preserving SR$(\epsilon)$ matrices entrywise in higher dimensions are power functions and the signum function. This finishes the proof.
	\end{proof}
	
	We next prove Theorem~\ref{Theorem_SSR_Fixed_Classification} using the following lemma.
	\begin{lemma}\label{Lemma_SSR_Fixed_positive_increasing_mmc}
		Let $f:(0,\infty)\to\mathbb{R}$ and $\epsilon = (\epsilon_1,\epsilon_2)$ be a given sign pattern with $\epsilon_1=1$. Then the following statements are equivalent.
		\begin{itemize}
			\item[(1)] $f[-]$ preserves the set of $2\times2$ SSR$(\epsilon)$ matrices with $\epsilon_1=\epsilon_2$ of the form
			\begin{align*}
				\begin{pmatrix}
					a & b \\
					b & c
				\end{pmatrix}, ~ \text{where} ~ a,b,c >0 ~ \text{and} ~ 0<b<\sqrt{ac}.
			\end{align*}
			
			\item[(2)] $f[-]$ preserves the set of $2\times2$ SSR$(\epsilon)$ matrices with $\epsilon_1\neq\epsilon_2$ of the form
			\begin{align*}
				\begin{pmatrix}
					b & a \\
					c & b
				\end{pmatrix}, ~ \text{where} ~ a,b,c >0 ~ \text{and} ~ 0<b<\sqrt{ac}.
			\end{align*}
			
			\item[(3)] $f$ is positive, strictly increasing, and multiplicatively mid-convex on $(0,\infty)$. Moreover, $f$ is continuous on $(0,\infty)$.
		\end{itemize}
	\end{lemma}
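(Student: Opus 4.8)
The plan is to prove the cyclic chain $(1)\Rightarrow(2)\Rightarrow(3)\Rightarrow(1)$, mirroring the proof of Theorem~\ref{Theorem_SR_Fixed_1_nonnegative_nondecreasing} but inserting one new step to cope with the fact that SSR matrices only record \emph{strict} minor inequalities.

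For $(1)\Rightarrow(2)$ I would appeal to Remark~\ref{Remark_SSR_3}(iii) with $n=2$: right multiplication by $P_2$ sends $\begin{pmatrix} a & b \\ b & c\end{pmatrix}$ to $\begin{pmatrix} b & a \\ c & b\end{pmatrix}$, sends the sign pattern $(1,1)$ to $(1,-1)$, and satisfies $f[A]P_2=f[AP_2]$; since $\begin{pmatrix}a&b\\b&c\end{pmatrix}$ is SSR$((1,1))$ precisely when $ac>b^2$ -- exactly the condition under which $\begin{pmatrix}b&a\\c&b\end{pmatrix}$ is SSR$((1,-1))$ -- the two preservation statements are one and the same. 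For $(2)\Rightarrow(3)$: applying $f[-]$ to the matrices $\begin{pmatrix} b & a \\ a & b\end{pmatrix}$ with $0<b<a$ (which are SSR$((1,-1))$) forces $f(a),f(b)>0$ and $f(b)^2<f(a)^2$, so $f$ is positive and strictly increasing on $(0,\infty)$; applying it to the matrices $\begin{pmatrix} b & a \\ c & b\end{pmatrix}$ with $a,b,c>0$ and $b^2<ac$ forces
\[
f(b)^2 < f(a)f(c)\qquad\text{whenever } a,b,c>0 \text{ and } b^2<ac .
\]
Setting $g(t):=\log f(e^t)$ for $t\in\mathbb{R}$ (well defined since $f>0$, and strictly increasing), this becomes $g(y)<\tfrac{1}{2}\big(g(x)+g(z)\big)$ whenever $y<\tfrac{1}{2}(x+z)$.

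The crucial new step is to derive continuity of $g$, hence of $f$. Since $g$ is monotone it has at worst jump discontinuities; if $g$ jumped at some $t_0$, then, applying the inequality above with $x=t_0-\delta$, $z=t_0+2\epsilon+\delta$ (whose midpoint is $t_0+\epsilon$) and any $y\in(t_0,t_0+\epsilon)$, where monotonicity gives $g(y)\ge g(t_0^+)$, one obtains $g(t_0^+)\le\tfrac{1}{2}\big(g(t_0-\delta)+g(t_0+2\epsilon+\delta)\big)$; letting $\delta\to0^+$ and then $\epsilon\to0^+$, and using one-sided limits of the monotone $g$, this yields $g(t_0^+)\le\tfrac{1}{2}\big(g(t_0^-)+g(t_0^+)\big)$, i.e. $g(t_0^+)\le g(t_0^-)$ -- contradicting the presence of a jump. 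With $g$ continuous, letting $y\to\big(\tfrac{x+z}{2}\big)^-$ in $g(y)<\tfrac{1}{2}(g(x)+g(z))$ gives $g\big(\tfrac{x+z}{2}\big)\le\tfrac{1}{2}(g(x)+g(z))$, i.e. $f(\sqrt{ac})\le\sqrt{f(a)f(c)}$ for all $a,c>0$, so $f$ is multiplicatively mid-convex; this completes $(2)\Rightarrow(3)$. (Once mid-convexity is in hand one may instead cite Ostrowski's Theorem~\ref{Theorem_SR_Fixed_1_continuous} for continuity, but the jump argument gives it directly.) Finally, $(3)\Rightarrow(1)$: if $A=\begin{pmatrix}a&b\\b&c\end{pmatrix}$ is SSR$((1,1))$ then $a,b,c>0$ and $0<b<\sqrt{ac}$, so $f[A]$ has positive entries and $f(b)<f(\sqrt{ac})\le\sqrt{f(a)f(c)}$ forces $\det f[A]=f(a)f(c)-f(b)^2>0$; hence $f[A]$ is SSR$((1,1))$.

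I expect the continuity step within $(2)\Rightarrow(3)$ to be the only real obstacle. In the SR setting one may substitute the singular matrix $\begin{pmatrix}\sqrt{ac}&a\\c&\sqrt{ac}\end{pmatrix}$ and read off closed multiplicative mid-convexity at once; in the SSR setting that matrix is inadmissible, so every test matrix produces only the open inequality $f(b)^2<f(a)f(c)$ for $b^2<ac$, and extracting continuity from it -- ruling out jumps of the monotone $f$ by the limiting argument above -- is the one place a genuinely new argument is needed.
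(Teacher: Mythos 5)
Your proof is correct, and its skeleton --- the cyclic chain, passing between the two sign patterns via right multiplication by $P_2$, the symmetric test matrix $\left(\begin{smallmatrix} b & a\\ a& b\end{smallmatrix}\right)$ for strict monotonicity, positivity from the $1\times1$ minors, and the direct verification of $(3)\Rightarrow(1)$ --- coincides with the paper's. You also correctly identify the crux: SSR test matrices only yield the \emph{open} inequality $f(b)^2<f(a)f(c)$ for $b^2<ac$, so continuity cannot be read off as in the SR case. Where you diverge is in how that crux is resolved. The paper introduces the right-continuous regularization $f^+(x):=\lim_{y\to x^+}f(y)$, shows via the matrices $\left(\begin{smallmatrix}\sqrt{xy}+\delta & x+\delta\\ y+\delta & \sqrt{xy}+\delta\end{smallmatrix}\right)$ and the limit $\delta\to0^+$ that $f^+$ satisfies the \emph{closed} inequality $f^+(x)f^+(y)\ge f^+(\sqrt{xy})^2$, and then invokes Ostrowski (Theorem~\ref{Theorem_SR_Fixed_1_continuous}) to conclude that $f^+$, hence $f$, is continuous. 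You instead work with $g(t)=\log f(e^t)$ and rule out jumps of the monotone $g$ directly from the open midpoint inequality, by sliding the midpoint of the two outer test points just past the putative jump; this is a self-contained, elementary substitute for the Ostrowski step, and the limiting argument is valid as written. Each route buys something: the paper's reuses machinery already in place, while yours avoids the auxiliary function $f^+$ and the external citation. One small caveat: your parenthetical remark that ``once mid-convexity is in hand one may instead cite Ostrowski for continuity'' is circular as stated, since you obtain closed multiplicative mid-convexity of $f$ only \emph{after} continuity (by letting $b\to\sqrt{ac}^{\,-}$); the paper escapes this circle precisely by applying Ostrowski to $f^+$ rather than to $f$. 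Since your main argument does not rely on that aside, the proof stands.
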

	\begin{proof}
		That (1)$\implies$(2) is trivial. The implication (3)$\implies$(1) follows similarly to the proof of Theorem~\ref{Theorem_SR_Fixed_1_nonnegative_nondecreasing}. We now show (2)$\implies$(3) to complete the proof. Let $\epsilon_1\neq\epsilon_2$ and suppose that $f[-]$ preserves all $2\times2$ SSR$(\epsilon)$ matrices of the form $\begin{pmatrix}
			b & a \\
			c &b
		\end{pmatrix}$. Consider the matrix
		\begin{align*}
			A_1(x,y):=\begin{pmatrix}
				x & y \\
				y & x
			\end{pmatrix}, \quad 0<x<y<\infty.
		\end{align*}
		Then $A_1(x,y)$ is SSR with $\epsilon_1\neq\epsilon_2$, and so is the matrix $f[A_1(x,y)]$. The determinant of $f[A_1(x,y)]$ gives $f(y)>f(x)$ whenever $y>x$, which implies that $f$ is strictly increasing on $(0,\infty)$. To show positivity of $f$, consider the following matrix. 
		\begin{align*}
			A_2(x,\delta):=\begin{pmatrix}
				x & x+\delta \\
				x & x
			\end{pmatrix}, \quad x,\delta>0.
		\end{align*}
		Observe that $A_2(x,\delta)$ is SSR$(\epsilon)$ with $\epsilon_1\neq\epsilon_2$. By hypotheses, $f[A_2(x,\delta)]$ is SSR$(\epsilon)$ and its $1\times1$ minors give $f(x)>0$ for all $x>0$.
		
		Since $f$ is strictly increasing on $(0,\infty)$, it can have at most countably many discontinuities, all of which are jump discontinuities. For $x>0$, define the function $f^+(x):=\displaystyle{\lim_{y\to x^+}}f(y).$ Then $f^+(x)\geq f(x)$ for all $x$, and $f^+(x)=f(x)$ at all points of right continuity and has the same jumps as $f$. To establish continuity of $f$, it suffices to show $f^+(x)$ is continuous. Consider the following matrix
		\begin{align*}
			A_3(x,y,\delta):=\begin{pmatrix}
				\sqrt{xy} + \delta & x+\delta \\
				y + \delta & \sqrt{xy} + \delta
			\end{pmatrix}, \quad  x\neq y >0 ~ \text{and} ~ \delta>0.
		\end{align*}
		This matrix is SSR with $\epsilon_1\neq\epsilon_2$. By hypotheses, $\epsilon_2\det f[A_3(x,y,\delta)]>0$, and thus
		\[f(x+\delta)f(y+\delta)> f(\sqrt{xy}+\delta)^2, \quad \forall ~ x\neq y>0 ~ \text{and} ~ \delta>0.\]
		Taking $\delta\to0^+$, we obtain
		\begin{align*}
			f^+(x)f^+(y) \geq f^+(\sqrt{xy})^2, \quad \forall ~ x,y>0.
		\end{align*}
		This shows that $f^+$ is positive, strictly increasing, and multiplicatively mid-convex on $(0,\infty)$. Hence, by Theorem~\ref{Theorem_SR_Fixed_1_continuous}, $f^+$ is continuous on $(0,\infty)$, which implies that $f$ itself is continuous and multiplicatively mid-convex on $(0,\infty)$.
	\end{proof}
	
	We are now prepared to prove Theorem~\ref{Theorem_SSR_Fixed_Classification}.
	
	\begin{proof}[Proof of Theorem \ref{Theorem_SSR_Fixed_Classification}]
		We establish the same equivalences as those in Theorem~\ref{Theorem_SR_Fixed_Classification} to complete the proof. First, observe that (1)$\implies$(2) follows from Theorem~\ref{Theorem_line_insertion_SSR}, which ensures that any $d\times d$ SSR$(\epsilon)$ matrix can be extended to an $m\times n$ SSR$(\epsilon)$ matrix by inserting rows and columns. The converse, (2)$\implies$(1), is straightforward. To show (3)$\implies$(2), we note that the case $d=1$ is immediate, while the remaining cases follow from Corollary~\ref{Corollary_SSR_power_classification}. 
		
		It therefore remains to show that (2)$\implies$(3). For $d=1$, the result holds trivially. Let $d=2$. By Lemma~\ref{Lemma_SSR_Fixed_positive_increasing_mmc}, $f$ is positive, strictly increasing, and continuous on $(0,\infty)$. We claim that $f(x) = f(1)x^{\alpha}$ for all $x>0$, with $\alpha>0$. Let $\epsilon_1=\epsilon_2$ and define
		\begin{align*}
			A(x,y,\delta) := \begin{pmatrix}
				x & xy \\
				1-\delta & y
			\end{pmatrix}, \quad B(x,y,\delta) := \begin{pmatrix}
				xy & y \\
				x & 1+\delta
			\end{pmatrix}, \quad x,y>0 ~ \text{and} ~ 0<\delta<1.
		\end{align*}
		Both $A(x,y,\delta)$ and $B(x,y,\delta)$ are SSR$(\epsilon)$ matrices, so $\epsilon_2\det f[A(x,y,\delta)], \epsilon_2\det f[B(x,y,\delta)]>0$. These inequalities yield
		\begin{align*}
			f(x)f(y) > f(xy)f(1-\delta) ~ \text{and} ~
			f(xy)f(1+\delta) > f(x)f(y), \quad \forall ~ x,y>0  ~ \text{and} ~ 0<\delta<1.
		\end{align*}
		Taking the limit as $\delta\to0^+$ and applying continuity of $f$, we obtain
		\begin{align*}
			f(x)f(y)=f(1)f(xy), \quad \forall ~ x,y>0.
		\end{align*}
		Hence, the function $\frac{f(x)}{f(1)}$ is continuous, positive, strictly increasing, and multiplicative on $(0,\infty)$. Following a proof similar to case 2 of Theorem~\ref{Theorem_SR_Fixed_1_continuous_power}, it follows that $f(x) = f(1)x^{\alpha}$ for all $x>0$, with $\alpha>0$. The same conclusion holds when $\epsilon_1\neq\epsilon_2$ by applying the above arguments to the matrices $A(x,y,\delta)P_2$ and $B(x,y,\delta)P_2$.
		
		For $d\geq3$, we can extend the above $2\times2$ SSR matrices to $d\times d$ SSR$(\epsilon)$ matrices using Theorem~\ref{Theorem_line_insertion_SSR}. Now $f[-]$ preserves all $2\times 2$ SSR$(\epsilon)$ matrices, and the conclusion follows from the case $d=2$. The admissible exponents $\alpha$ are classified in Corollary~\ref{Corollary_SSR_power_classification}, which concludes the proof.
	\end{proof}
	
	We next apply Theorems~\ref{Theorem_SR_Fixed_Classification}~and~\ref{Theorem_SSR_Fixed_Classification} to classify all entrywise functions that preserve (strict) sign regularity with non-positive (negative) entries, for every fixed sign pattern and in each fixed dimension.
	
	\begin{cor}\label{Corollary_SR_epsilon_1=-1}
		Let $m,n\geq1$ be integers, and let $\epsilon=(\epsilon_1,\ldots,\epsilon_d)$ be a sign pattern with $\epsilon_1=-1$, where $d:=\min\{m,n\}$. Let $g:(-\infty,0]\to\mathbb{R}$ be a function. The following are equivalent.
		\begin{itemize}
			\item[(1)] $g[-]$ preserves the class of $m\times n$ SR$(\epsilon)$ matrices.
			\item[(2)] $g[-]$ preserves the class of $d\times d$ SR$(\epsilon)$ matrices.
			\item[(3)]
			\begin{itemize}
				\item[(a)] For $d=1:$ The function $g$ is non-positive.
				
				\item[(b)] For $d=2:$ $g(x)=c\sgn(|x|)$ or $g(x)=c|x|^{\alpha}$ for some $\alpha\in[0,\infty)$ and some $c\leq0$.
				
				\item[(c)] For $d=3:$ $g(x) = \begin{cases}
					c|x|^{\alpha} ~ \text{for some} ~ \alpha\in\{0\}\cup[1,\infty) ~ \text{and some} ~ c\leq0, &\text{if} ~ \epsilon_2\neq\epsilon_3, \\
					c\sgn(|x|) ~ \text{or} ~ c|x|^{\alpha} ~ \text{for some} ~ \alpha\in[0,1] ~ \text{and some} ~ c\leq0, &\text{if} ~ \epsilon_2=\epsilon_3.
				\end{cases}$
				
				\item[(d)] For $d\geq4:$ $g(x) = \begin{cases}
					c|x|^{\alpha} ~ \text{for some} ~ \alpha\in\{0,1\} ~ \text{and some} ~ c\leq0, &\text{if} ~ \epsilon_2\neq\epsilon_3, \\
					c\sgn(|x|) ~ \text{or} ~ c|x|^{\alpha} ~ \text{for some} ~ \alpha\in\{0,1\} ~ \text{and some} ~ c\leq0, &\text{if} ~ \epsilon_2=\epsilon_3.
				\end{cases}$
			\end{itemize}
		\end{itemize}
	\end{cor}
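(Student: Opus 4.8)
The plan is to deduce this corollary from Theorem~\ref{Theorem_SR_Fixed_Classification} by the sign-reversal substitution $A\mapsto -A$. The first step is to record the two elementary facts underlying this reduction. If $A\in(-\infty,0]^{m\times n}$ is an $m\times n$ matrix with non-positive entries and we set $B:=-A\in[0,\infty)^{m\times n}$, then every $k\times k$ minor of $B$ equals $(-1)^k$ times the corresponding $k\times k$ minor of $A$; hence $A$ is SR$(\epsilon)$ if and only if $B$ is SR$(\epsilon^{\ast})$, where $\epsilon^{\ast}_k:=(-1)^k\epsilon_k$ for $k=1,\ldots,d$. In particular $\epsilon^{\ast}_1=-\epsilon_1=1$, so $\epsilon^{\ast}$ is an admissible sign pattern for Theorem~\ref{Theorem_SR_Fixed_Classification}. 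The same observation applied to an arbitrary matrix shows that a matrix $M$ is SR$(\epsilon)$ if and only if $-M$ is SR$(\epsilon^{\ast})$.

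Next, given $g:(-\infty,0]\to\mathbb{R}$, define $\widetilde{g}:[0,\infty)\to\mathbb{R}$ by $\widetilde{g}(x):=-g(-x)$; this is a bijective correspondence between such pairs of functions. For $A$ with non-positive entries and $B=-A$ one has $g[A]=-\widetilde{g}[B]$, so by the previous paragraph $g[A]$ is SR$(\epsilon)$ exactly when $\widetilde{g}[B]$ is SR$(\epsilon^{\ast})$. Since $B\mapsto -B$ is a bijection between $m\times n$ SR$(\epsilon)$ matrices with non-positive entries and $m\times n$ SR$(\epsilon^{\ast})$ matrices with non-negative entries (and likewise for $d\times d$ matrices), this yields
\[
g[-]\ \text{preserves $m\times n$ {\rm SR}$(\epsilon)$}\iff \widetilde{g}[-]\ \text{preserves $m\times n$ {\rm SR}$(\epsilon^{\ast})$},
\]
and the analogous equivalence with $m,n$ replaced by $d$. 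Applying Theorem~\ref{Theorem_SR_Fixed_Classification} to $\widetilde{g}$ and $\epsilon^{\ast}$ therefore gives the equivalences (1)$\iff$(2)$\iff$``$\widetilde{g}$ has the form in Theorem~\ref{Theorem_SR_Fixed_Classification}(3) for $\epsilon^{\ast}$''.

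The remaining step is purely a translation back to $g$. From $\epsilon^{\ast}_2=\epsilon_2$ and $\epsilon^{\ast}_3=-\epsilon_3$ we get $\epsilon^{\ast}_2=\epsilon^{\ast}_3\iff\epsilon_2\neq\epsilon_3$ and $\epsilon^{\ast}_2\neq\epsilon^{\ast}_3\iff\epsilon_2=\epsilon_3$, which accounts for the interchange of the two cases relative to Theorem~\ref{Theorem_SR_Fixed_Classification}. Moreover, writing $y=-x\le0$ and using $|y|=-y$ together with the convention $0^0:=1$, one checks that $\widetilde{g}(x)=c\,\sgn(x)$ with $c\ge0$ corresponds to $g(y)=-c\,\sgn(|y|)$, and $\widetilde{g}(x)=c\,x^{\alpha}$ with $c\ge0$ corresponds to $g(y)=-c\,|y|^{\alpha}$; in both cases the constant $c'=-c$ ranges over $(-\infty,0]$. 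Matching these forms dimension by dimension ($d=1,2,3,\ge4$) with Theorem~\ref{Theorem_SR_Fixed_Classification}(3) yields exactly the list in part (3) of the corollary, completing the proof. The only point requiring care is the bookkeeping of signs --- both in the matrix entries and in the minors --- and the consequent swap of the conditions $\epsilon_2=\epsilon_3$ and $\epsilon_2\neq\epsilon_3$; once this is tracked correctly, everything else is a direct invocation of Theorem~\ref{Theorem_SR_Fixed_Classification}.
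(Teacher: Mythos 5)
Your proposal is correct and follows essentially the same route as the paper: the paper also reduces to Theorem~\ref{Theorem_SR_Fixed_Classification} via the substitution $h(x):=-g(-x)$ and the sign pattern $\epsilon'_i=(-1)^i\epsilon_i$, which is exactly your $\widetilde{g}$ and $\epsilon^{\ast}$. Your write-up is in fact somewhat more explicit than the paper's about the minor bookkeeping and the resulting swap of the cases $\epsilon_2=\epsilon_3$ versus $\epsilon_2\neq\epsilon_3$.
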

	\begin{proof}
		The equivalence (1)$\iff$(2) is straightforward. We now show (3)$\implies$(2). Let $A$ be any $d\times d$ SR$(\epsilon)$ matrix with $\epsilon_1=-1$, and let $f$ be an entrywise preserver of $d\times d$ SR$(\epsilon^{\prime})$ matrices, where $\epsilon^{\prime}_i=(-1)^{i}\epsilon_i$ for $i=1,\ldots,d$. Define $g(x):=-f(-x)$ for $x<0$. Then, $-f[-A]$ is SR$(\epsilon)$ if and only if $f[-A]$ is SR$(\epsilon^{\prime})$. This observation shows that the entrywise preservers of SR matrices corresponding to the sign pattern $\epsilon$ coincide with those for the sign pattern $\epsilon^{\prime}$. The implication (3)$\implies$(2) of Theorem~\ref{Theorem_SR_Fixed_Classification} now completes the argument.
		
		To show (2)$\implies$(3), suppose $g[-]$ preserves the class of $d\times d$ SR$(\epsilon)$ matrices. The case $d=1$ is trivial. For $d\geq2$, define a function $h:[0,\infty)\to\mathbb{R}$ by $h(x):=-g(-x)$. Then $h$ is an entrywise preserver of SR$(\epsilon^{\prime})$ matrices with $\epsilon_1^{\prime}=1$. By Theorem~\ref{Theorem_SR_Fixed_Classification}, we have the structure of $h$ for each $d\geq2$. Now, by expressing $g$ in terms of $h$ as $g(x)=-h(-x)$ for all $x\leq0$ gives the corresponding form of entrywise preservers for SR$(\epsilon)$ matrices with $\epsilon_1=-1$. This completes the proof.
	\end{proof}
	
	We conclude this section by classifying entrywise preservers of $m\times n$ SSR$(\epsilon)$ matrices with $\epsilon_1=-1$, for all fixed $m,n\geq1$.
	
	\begin{cor}\label{Corollary_SSR_epsilon_1=-1}
		Let $m,n\geq1$ be integers, and let $\epsilon=(\epsilon_1,\ldots,\epsilon_d)$ be a sign pattern with $\epsilon_1=-1$, where $d:=\min\{m,n\}$. Suppose $g:(-\infty,0)\to\mathbb{R}$ is a function. The following are equivalent.
		\begin{itemize}
			\item[(1)] $g[-]$ preserves the class of $m\times n$ SSR$(\epsilon)$ matrices.
			\item[(2)] $g[-]$ preserves the class of $d\times d$ SSR$(\epsilon)$ matrices.
			\item[(3)]
			\begin{itemize}
				\item[(a)] For $d=1:$ $g$ is any negative function.
				
				\item[(b)] For $d=2:$ $g(x)=c|x|^{\alpha}$ for some $\alpha\in(0,\infty)$ and some $c<0$.
				
				\item[(c)] For $d=3:$ $g(x) = \begin{cases}
					c|x|^{\alpha} ~ \text{for some} ~ \alpha\in[1,\infty) ~ \text{and some} ~ c<0, &\text{if} ~ \epsilon_2\neq\epsilon_3, \\
					c|x|^{\alpha} ~ \text{for some} ~ \alpha\in(0,1] ~ \text{and some} ~ c<0, &\text{if} ~ \epsilon_2=\epsilon_3.
				\end{cases}$
				
				\item[(d)] For $d\geq4:$ $g(x)=c|x|$ for some $c<0$.
			\end{itemize}
		\end{itemize}
	\end{cor}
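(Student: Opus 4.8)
The plan is to mirror the proof of Corollary~\ref{Corollary_SR_epsilon_1=-1}, reducing the negative-entry case to the already-resolved positive-entry case of Theorem~\ref{Theorem_SSR_Fixed_Classification} via the sign-flipping involution $A\mapsto -A$.

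First, the equivalence (1)$\iff$(2) is handled exactly as in the SR case: (1)$\implies$(2) follows by restricting an entrywise preserver of $m\times n$ SSR$(\epsilon)$ matrices to $d\times d$ submatrices, while (2)$\implies$(1) follows from Theorem~\ref{Theorem_line_insertion_SSR}, which lets us enlarge any $d\times d$ SSR$(\epsilon)$ matrix to an $m\times n$ SSR$(\epsilon)$ one by inserting rows and columns along its borders. Here it matters that the domain is the \emph{open} half-line $(-\infty,0)$, since SSR matrices have no zero entries.

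For the heart of the matter I would set up the correspondence between sign patterns. If $A$ has all entries negative, then $-A$ has all entries positive and every $k\times k$ minor of $-A$ equals $(-1)^k$ times the corresponding minor of $A$; hence $A$ is SSR$(\epsilon)$ (with $\epsilon_1=-1$) if and only if $-A$ is SSR$(\epsilon')$, where $\epsilon'_k:=(-1)^k\epsilon_k$ for $k=1,\ldots,d$, so that $\epsilon'_1=1$. Given $g:(-\infty,0)\to\mathbb{R}$, define $h:(0,\infty)\to\mathbb{R}$ by $h(x):=-g(-x)$. Then for any positive matrix $B$ we have $h[B]=-g[-B]$, and applying the minor-sign identity once more shows that $g[A]$ is SSR$(\epsilon)$ if and only if $h[-A]$ is SSR$(\epsilon')$. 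As $A$ ranges over the $d\times d$ negative SSR$(\epsilon)$ matrices, $-A$ ranges over the $d\times d$ positive SSR$(\epsilon')$ matrices, so $g[-]$ preserves $d\times d$ SSR$(\epsilon)$ matrices (with $\epsilon_1=-1$) if and only if $h[-]$ preserves $d\times d$ SSR$(\epsilon')$ matrices (with $\epsilon'_1=1$); one can alternatively phrase this bookkeeping through Remark~\ref{Remark_SSR_3}.

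It then remains to invoke Theorem~\ref{Theorem_SSR_Fixed_Classification} to read off the permissible forms of $h$ in each dimension, and translate back via $g(x)=-h(-x)$: a power $h(x)=cx^{\alpha}$ with $c>0$ corresponds to $g(x)=-c\,|x|^{\alpha}$ with $-c<0$, a positive $h$ to a negative $g$, and (for $d\geq4$) the homothety $h(x)=cx$ to $g(x)=c\,|x|$ with $c<0$. There is no genuine obstacle here; the only point requiring care is that $\epsilon'_2=\epsilon_2$ while $\epsilon'_3=-\epsilon_3$, so that the condition $\epsilon'_2=\epsilon'_3$ is equivalent to $\epsilon_2\neq\epsilon_3$ and vice versa — this is precisely why the two subcases of part~(3)(c) appear interchanged relative to Theorem~\ref{Theorem_SSR_Fixed_Classification}. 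The case $d=1$ is immediate, $g$ being required only to be nowhere zero and, since $\epsilon_1=-1$, negative.
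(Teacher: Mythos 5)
Your proposal is correct and is essentially the paper's own argument: the paper proves this corollary by citing Theorem~\ref{Theorem_SSR_Fixed_Classification} together with the same reduction used for Corollary~\ref{Corollary_SR_epsilon_1=-1}, namely $h(x):=-g(-x)$ and the sign-pattern involution $\epsilon'_k=(-1)^k\epsilon_k$, and your bookkeeping (including the resulting interchange of the two subcases in part~(3)(c)) is accurate. The one small slip is in the equivalence $(1)\iff(2)$: you have the two tools swapped --- it is $(1)\implies(2)$ that needs Theorem~\ref{Theorem_line_insertion_SSR} (embed the $d\times d$ matrix into an $m\times n$ SSR$(\epsilon)$ matrix, apply $g$, then pass to the submatrix), whereas $(2)\implies(1)$ is the straightforward direction, since every minor of an $m\times n$ matrix sits inside some $d\times d$ submatrix.
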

	\begin{proof}
		This follows from Theorem~\ref{Theorem_SSR_Fixed_Classification}, using an argument similar to the proof of Corollary~\ref{Corollary_SR_epsilon_1=-1}.
	\end{proof}
	
	\section{Entrywise preservers of sign regularity for all sign patterns} \label{Section_Not_Fixed_Sign_Pattern}
	
	This section is devoted to characterizing the functions that entrywise preserve the class of all $m\times n$ SR/SSR matrices -- allowing all possible sign patterns -- for each fixed $m,n\geq1$. In this setting, applying such a function to an SR/SSR matrix may result in an SR/SSR matrix with a different sign pattern from that of the original matrix. We begin with the SR case and first establish some fundamental properties of their entrywise preservers.
	
	\begin{prop}\label{Theorem_SR_all_sign_pos_neg}
		Let $m,n\geq1$ be integers with $\max\{m,n\}\geq2$, and let $f:\mathbb{R}\to\mathbb{R}$ be a function that entrywise preserves the class of $m\times n$ SR matrices. Then the following holds:
		\begin{itemize}
			\item[(1)] If $f(0)=0$, then $f|_{(0,\infty)}$ and $f|_{(-\infty,0)}$ are each either a non-positive function or a non-negative function.
			\item[(2)] If $f(0)\neq0$, then $f$ is either non-positive or non-negative on $\mathbb{R}$.
		\end{itemize}
	\end{prop}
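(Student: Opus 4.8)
The plan is to produce, for each assertion, a very sparse SR matrix whose entrywise image already fails to be sign regular at the level of $1\times1$ minors. The underlying trivial fact is that the $1\times1$ minors of a matrix are exactly its entries, so a matrix fails to be SR the moment it has both a strictly positive and a strictly negative entry. Also, since sign regularity is preserved under transposition and $f[A^{T}]=f[A]^{T}$, the hypothesis on $f$ is symmetric in $m$ and $n$, so I may assume $n\geq2$ throughout.

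For part (1), assume $f(0)=0$ and suppose toward a contradiction that $f(x_1)>0$ and $f(x_2)<0$ for some $x_1,x_2>0$. Let $A\in\mathbb{R}^{m\times n}$ be the matrix with $a_{11}=x_1$, $a_{12}=x_2$, and all other entries $0$. Then $A$ has rank at most $1$, so all its minors of size at least $2$ vanish, while its $1\times1$ minors are $x_1,x_2$ together with some $0$'s, all non-negative, hence consistent with $\epsilon_1=1$; thus $A$ is SR. By hypothesis $f[A]$ is then SR, but $f[A]$ has the strictly positive entry $f(x_1)$ and the strictly negative entry $f(x_2)$, a contradiction. So $f$ does not take both strict signs on $(0,\infty)$; running the same argument with $x_1,x_2<0$ (now with $\epsilon_1=-1$) gives the same conclusion on $(-\infty,0)$.

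For part (2), set $c:=f(0)\neq0$; replacing $f$ by $-f$ if necessary (this again entrywise preserves $m\times n$ SR matrices, since $-B$ is SR whenever $B$ is), we may assume $c>0$. Fix any $x\neq0$ and let $A\in\mathbb{R}^{m\times n}$ have $a_{11}=x$ and all other entries $0$; as before $A$ has rank at most $1$, so it is SR with $\epsilon_1=\sgn(x)$ and all larger minors equal to $0$. Now $f[A]$ has $(1,1)$-entry $f(x)$ and every other entry equal to $c>0$, and there genuinely is at least one other entry, precisely because $\max\{m,n\}\geq2$. If $f(x)<0$ then $f[A]$ has entries of both strict signs and fails to be SR, contradicting the hypothesis; hence $f(x)\geq0$. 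As this holds for all $x\neq0$ and $f(0)=c>0$, we conclude $f\geq0$ on all of $\mathbb{R}$, and symmetrically $f\leq0$ when $c<0$.

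I do not expect a real obstacle here: the only point needing care is checking that the test matrices are themselves SR, which is immediate from their rank being at most $1$ (all minors of size at least $2$ vanish, leaving only the trivially consistent $1\times1$ minors). The one genuinely informative point is in part (2): when $f(0)\neq0$, the value $f(0)$ occurs as a $1\times1$ minor of $f[A]$ for the single-nonzero-entry matrix $A$, which forces it to share a non-strict sign with every $f(x)$, thereby tying together the behaviour on $(0,\infty)$, at $0$, and on $(-\infty,0)$; when $f(0)=0$ this linkage disappears, and $f(x)=x$ shows that it genuinely must.
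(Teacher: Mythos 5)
Your proof is correct and follows essentially the same route as the paper: both arguments exploit only the $1\times1$ minor condition, using SR matrices with at most two nonzero entries (your test matrices are exactly the paper's $1\times2$ matrices $(x_1\ x_2)$ and $(0\ x)$ after padding with zeros) to force all values of $f$ on the relevant set to share a weak sign. The only cosmetic differences are your explicit rank-one justification that the padded matrices are SR and the WLOG reduction via $-f$ in part (2), neither of which changes the substance.
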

	\begin{proof}
		To show $(1)$, suppose $f(0)=0$. We first show that $f$ is either non-negative or non-positive on $(0,\infty)$. If $f\equiv0$ on $(0,\infty)$, then we are done. Let $x_1>0$ such that $f(x_1)\neq0$. Consider the $1\times2$ SR matrix $A = \begin{pmatrix}
			x_1 & x_2
		\end{pmatrix}$, where $x_2>0$. Since $f[A]$ is SR by hypothesis, the signs of $f(x_1)$ and $f(x_2)$ must agree for all $x_2>0$. This implies that $f|_{(0,\infty)}$ is either non-positive or non-negative. The same argument applies to $(-\infty,0)$ by considering negative entries instead.
		
		To prove $(2)$, assume $f(0)\neq0$ and consider the $1\times2$ SR matrix $\begin{pmatrix}
			0 & x
		\end{pmatrix}$, where $x\in\mathbb{R}$. By hypothesis, both $f(0)$ and $f(x)$ have the same sign for all $x\in\mathbb{R}$ with $f(x)\neq0$. Thus, $f$ is either non-positive or non-negative across the entire real line, depending on the sign of $f(0)$. These conclusions also hold for $2\times1$ SR matrices by considering the transpose of the above examples. If $\max\{m,n\}\geq2$, then one of these classes of matrices can be embedded into larger $m\times n$ SR matrices by augmenting them with zeros. This completes the proof.
	\end{proof}
	
	We now proceed to derive further properties of entrywise preservers of sign regularity.
	
	\begin{prop}\label{Theorem_SR_all_sign_monotonic_MMC}
		Let $m,n\geq2$ be integers with $(m,n)\neq(2,2)$. Let $f:\mathbb{R}\to\mathbb{R}$ be an entrywise preserver of the set of all $m\times n$ SR matrices. Then, we have the following:
		\begin{itemize}
			\item[(1)] If $f(0)\neq0$, then $f\equiv f(0)$ on $\mathbb{R}$.
			\item[(2)] The restricted functions $f|_{(-\infty,0]}$ and $f|_{[0,\infty)}$ are monotonic.
			\item[(3)] $f$ satisfies $f(\gamma x)f(\gamma y) = f(\gamma\sqrt{xy})^2$ for all $x,y\geq0$ and for $\gamma\in\{\pm1\}$.
		\end{itemize}
	\end{prop}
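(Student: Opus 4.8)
The three parts are handled in turn, all via the same reduction. Since $m,n\ge2$ and $(m,n)\neq(2,2)$ force $\max\{m,n\}\ge3$, and since transposition preserves sign regularity and commutes with $f[-]$, I may assume $n\ge3$; then padding a $2\times3$ SR matrix with zero rows and columns yields an $m\times n$ SR matrix (all the new minors vanish), so it suffices to feed $f[-]$ such padded matrices. I also use that $f\mapsto-f$ preserves the hypothesis (negation multiplies $k\times k$ minors by $(-1)^{k}$), and Proposition~\ref{Theorem_SR_all_sign_pos_neg}, which already gives one-signedness of $f$ on each of $(0,\infty),(-\infty,0)$ when $f(0)=0$ and on all of $\mathbb R$ when $f(0)\ne0$. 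The substance is to exhibit the right rank-deficient $2\times3$ SR matrices, with zero entries placed just so, whose $f$-images can stay sign regular only if $f$ obeys the stated constraints.

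\textbf{Part (1).} Suppose $f(0)=c_{0}\ne0$; negating $f$, assume $f\ge0$ on $\mathbb R$ and $c_{0}>0$. For each $x\in\mathbb R$ feed $f[-]$ the zero-padded version of $\bigl(\begin{smallmatrix}0&x&0\\0&0&0\end{smallmatrix}\bigr)$: this matrix has rank at most $1$, so all its minors of size $\ge 2$ vanish and it is SR. Its image is the $m\times n$ matrix every entry of which is $c_{0}$ except one entry equal to $f(x)$, and the two $2\times2$ minors formed by the first two rows together with columns $\{1,2\}$ and $\{2,3\}$ are $c_{0}(c_{0}-f(x))$ and $c_{0}(f(x)-c_{0})$. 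These have opposite signs, so sign regularity of the image forces both to vanish; since $c_{0}\ne0$, $f(x)=c_{0}$. Hence $f\equiv f(0)$.

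\textbf{Part (2).} By Part (1) we may assume $f(0)=0$; treating $[0,\infty)$ and $(-\infty,0]$ separately and negating $f$ if needed, assume $f\ge0$ on $(0,\infty)$. If $f\equiv0$ on $(0,\infty)$ we are done; otherwise fix $b_{0}>0$ with $f(b_{0})>0$. For $a>c>0$ the zero-padded matrix $\bigl(\begin{smallmatrix}a&b_{0}&0\\c&b_{0}&b_{0}\end{smallmatrix}\bigr)$ is SR with pattern $(1,1)$; in its image the $2\times2$ minors on columns $\{1,3\}$ and $\{2,3\}$ are $f(a)f(b_{0})\ge0$ and $f(b_{0})^{2}>0$, so sign regularity forces the column-$\{1,2\}$ minor $f(b_{0})\bigl(f(a)-f(c)\bigr)$ to be $\ge0$, i.e.\ $f(a)\ge f(c)$. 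Thus $f$ is non-decreasing on $(0,\infty)$, hence on $[0,\infty)$ because $f(0)=0\le f$. The mirror matrix with $a<c<0$ and $b_{0}<0$ gives monotonicity on $(-\infty,0]$, and the non-positive branches follow from this applied to $-f$; if $f(0)\ne0$, Part (1) makes $f$ constant, hence monotone.

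\textbf{Part (3).} By Part (1) assume $f(0)=0$ and fix $\gamma\in\{\pm1\}$. If $f\equiv0$ on $\gamma(0,\infty)$, or if $x=0$ or $y=0$, the identity is immediate; so let $x,y>0$, set $g=\sqrt{xy}$ (so $g^{2}=xy$), and pick $p>0$ with $f(\gamma p)\ne0$. The zero-padded matrices
\[\begin{pmatrix}\gamma g&\gamma x&\gamma p\\ \gamma y&\gamma g&0\end{pmatrix}\qquad\text{and}\qquad\begin{pmatrix}\gamma g&\gamma x&0\\ \gamma y&\gamma g&\gamma p\end{pmatrix}\]
each have vanishing column-$\{1,2\}$ minor (since $g^{2}=xy$), while their remaining two $2\times2$ minors are $-py,-pg$ and $gp,xp$ respectively; these are of one sign, so both matrices are SR. In the image of the first matrix the minors on columns $\{1,3\}$ and $\{2,3\}$ equal $-f(\gamma p)f(\gamma y)$ and $-f(\gamma p)f(\gamma g)$, both $\le0$ by one-signedness on the ray $\gamma(0,\infty)$; if either is strictly negative, sign regularity forces the column-$\{1,2\}$ image minor $f(\gamma g)^{2}-f(\gamma x)f(\gamma y)$ to be $\le0$, while if neither is, then $f(\gamma g)=0$ and both sides of the claimed identity vanish. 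The second matrix forces the reverse inequality in exactly the same way. Hence $f(\gamma g)^{2}=f(\gamma x)f(\gamma y)$, which is Part (3).

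I expect the main obstacle to be the construction rather than any estimate: because $m,n\ge3$ cannot be assumed, everything must be squeezed out of $2\times3$ SR matrices, so one must arrange the entries — one off-diagonal position in Part (1), geometric means on the diagonal and a zero in the third column in Part (3) — so that the value or relation to be proved surfaces as one distinguished $2\times2$ minor of the image that sign regularity pins to zero or to a forced sign. Finding these $2\times3$ witnesses for Parts (1) and (3) is the delicate point; Part (2) is then a routine monotonicity extraction once Proposition~\ref{Theorem_SR_all_sign_pos_neg} is in hand.
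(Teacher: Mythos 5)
Your proposal is correct, and its overall strategy -- padded $2\times3$ rank-deficient SR witnesses whose images are forced to have compatible $2\times2$ minor signs -- is the same as the paper's. Parts (1) and (2) are essentially the paper's argument with cosmetically different matrices: the paper uses $\left(\begin{smallmatrix}x_1&0&0\\0&x_1&0\end{smallmatrix}\right)$ for (1) and a contradiction with $\left(\begin{smallmatrix}x_1&x_2&x_3\\x_2&x_2&x_2\end{smallmatrix}\right)$ for (2), whereas you prove (2) directly (and in fact extract the stronger conclusion that $f\geq 0$ and $f\not\equiv 0$ on $(0,\infty)$ forces $f$ non-decreasing there, which is consistent with the eventual classification). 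The genuine divergence is in Part (3): the paper feeds $f[-]$ the single rank-one matrix $\gamma\left(\begin{smallmatrix}x&\sqrt{xy}&x\\\sqrt{xy}&y&\sqrt{xy}\end{smallmatrix}\right)$, whose image has two contiguous $2\times2$ minors that are exact negatives of each other, so sign regularity forces both to vanish and the identity drops out in one line; you instead run a two-sided squeeze with two matrices, an auxiliary anchor $p$ with $f(\gamma p)\neq 0$, and a case split on degenerate vanishing. Your version is valid (I checked the minor computations and the degenerate cases), but it costs an extra construction and the one-signedness input from Proposition~\ref{Theorem_SR_all_sign_pos_neg}, both of which the paper's rank-one trick avoids.
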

	\begin{proof}
		To prove (1), without loss of generality assume $f(0)=c>0$, and suppose $f\not\equiv c$ on $\mathbb{R}$. Then there exists $x_1\in\mathbb{R}\setminus\{0\}$ such that $f(x_1)=a\neq c$, where $a\geq0$ by Proposition~\ref{Theorem_SR_all_sign_pos_neg}(2). Consider the following SR matrix and its image under entrywise application of $f$:
		\begin{align*}
			A_1 = \begin{pmatrix}
				x_1 & 0 & 0 \\
				0 & x_1 & 0
			\end{pmatrix} \implies ~ f[A_1] = \begin{pmatrix}
				a & c & c \\
				c & a & c
			\end{pmatrix}.
		\end{align*}
		Two of the $2\times2$ minors of $f[A_1]$ are $c(a-c)$ and $c(c-a)$, which have opposite signs, contradicting the assumption that $f[A_1]$ is SR. Thus $a=c$, and therefore $f(x)=c$ for all $x\in\mathbb{R}$.
		
		We now prove (2). Using Proposition~\ref{Theorem_SR_all_sign_pos_neg}, without loss of generality assume $f\geq0$ on $[0,\infty)$, and suppose $f$ is not monotonic on this interval. Then there exist $0\leq x_1 < x_2 < x_3 < \infty$ such that $0\leq f(x_1) < f(x_2)$ and $0\leq f(x_3) < f(x_2)$ (without loss of generality). Consider the SR matrix
		\begin{align*}
			A_2 = \begin{pmatrix}
				x_1 & x_2 & x_3 \\
				x_2 & x_2 & x_2
			\end{pmatrix}.
		\end{align*}
		Then $f[A_2]$ is SR. However, its contiguous $2\times2$ minors $f(x_2)(f(x_1)-f(x_2))<0$ and $f(x_2)(f(x_2)-f(x_3))>0$ have opposite signs, a contradiction. By a similar argument one can show that $f$ is monotonic on $(-\infty,0]$.
		
		To prove (3), fix $\gamma\in\{\pm1\}$ and let
		\begin{align*}
			A(x,y,\gamma) := \gamma\begin{pmatrix}
				x & \sqrt{xy} & x \\
				\sqrt{xy} & y & \sqrt{xy}
			\end{pmatrix}, \quad x,y\geq0.
		\end{align*}
		Then $A(x,y,\gamma)$ is SR, so $f[A(x,y,\gamma)]$ is SR. Thus the contiguous $2\times2$ minors of $f[A(x,y,\gamma)]$ gives the identity
		\begin{align*}
			f(\gamma x)f(\gamma y) = f(\gamma\sqrt{xy})^2, \quad \forall ~ x,y\geq0.
		\end{align*}
		
		For $3\times2$ SR matrices, transpose the above matrices. For higher order SR matrices, the result follows by adding zero rows and columns to the above matrices.
	\end{proof}
	
	The above result shows that any function $f$ which entrywise preserves all $m\times n$ SR matrices with $m,n\geq2$ and $(m,n)\neq(2,2)$, must be a non-zero constant function on $\mathbb{R}$ if $f(0)\neq0$. Consequently, we henceforth restrict our attention to functions $f$ satisfying $f(0)=0$. Before moving forward, we summarize the properties of $f$ that have been deduced thus far.
	
	\begin{rem}\label{Remark_all_sign_f1_f2}
		Let $m,n\geq2$ be integers such that $(m,n)\neq(2,2)$, and let $f:\mathbb{R}\to\mathbb{R}$ be a function such that $f[-]$ preserves the set of all $m\times n$ SR matrices. Then
		\begin{itemize}
			\item[(1)] either $f\equiv f(0)$ on $\mathbb{R}$, with $f(0)\neq0$, or
			\item[(2)] $f$ is given by
			\begin{align*}
				f(x) = \begin{cases}
					f_1(x), &x\leq0, \\
					f_2(x), &x\geq0,
				\end{cases}
			\end{align*}
			where $f(0)=0$, and $f_1:(-\infty,0]\to\mathbb{R}$, $f_2:[0,\infty)\to\mathbb{R}$ satisfy the identities
			\begin{align}\label{f_1f_2_MMC}
				f_1(-x)f_1(-y)=f_1(-\sqrt{xy})^2 ~ \text{and} ~ f_2(x)f_2(y)=f_2(\sqrt{xy})^2, \quad \forall ~ x,y\geq0.
			\end{align} 
			Moreover, each of $f_1$ and $f_2$ must satisfy one of the below sign conditions and monotonicity:
			
			\begin{tabular}{p{8cm}p{8cm}}
				\begin{itemize}
					\item[(i)] $f_1\geq0$ and increasing,
					\item[(ii)] $f_1\geq0$ and decreasing,
					\item[(iii)] $f_1\leq0$ and increasing,
					\item[(iv)] $f_1\leq0$ and decreasing,
				\end{itemize} &
				\begin{itemize}
					\item[(v)] $f_2\geq0$ and increasing,
					\item[(vi)] $f_2\geq0$ and decreasing,
					\item[(vii)] $f_2\leq0$ and increasing,
					\item[(viii)] $f_2\leq0$ and decreasing.
				\end{itemize}
			\end{tabular}
		\end{itemize}
		We next study the functions $f_1$ and $f_2$. As a first step, we show the continuity of $f$ on $\mathbb{R}\setminus\{0\}$.
	\end{rem}
	
	\begin{theorem}\label{Theorem_SR_all_sign_continuous}
		Let $m,n\geq2$ be integers such that $(m,n)\neq(2,2)$, and let $f:\mathbb{R}\to\mathbb{R}$ be a function with $f(0)=0$ such that $f[-]$ preserves the set of all $m\times n$ SR matrices. Then $f$ is continuous on $\mathbb{R}\setminus\{0\}$.
	\end{theorem}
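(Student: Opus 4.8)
The plan is to apply Remark~\ref{Remark_all_sign_f1_f2} (which is available because $f(0)=0$), splitting $f$ into $f_1:=f|_{(-\infty,0]}$ and $f_2:=f|_{[0,\infty)}$; these satisfy the multiplicative identities in \eqref{f_1f_2_MMC}, and each is monotonic and of fixed sign on its domain. I would first reduce to showing that $f_2$ is continuous on $(0,\infty)$. Indeed, $\widetilde f(x):=-f(-x)$ again satisfies the hypotheses of the theorem — reversing all signs of a matrix multiplies each $k\times k$ minor by $(-1)^k$ and therefore preserves sign regularity, so $\widetilde f[-]$ preserves all $m\times n$ SR matrices, and $\widetilde f(0)=0$ — while $\widetilde f|_{[0,\infty)}$ equals $x\mapsto -f_1(-x)$; hence continuity of $f_2$ on $(0,\infty)$ for every such $f$ yields, applied to $\widetilde f$, continuity of $f_1$ on $(-\infty,0)$, and the two together give the claim.

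For $f_2$ itself, I would normalise by replacing $f$ with $-f$ if needed so that $f_2\geq0$. If $f_2\equiv0$ on $(0,\infty)$ we are done; otherwise I would argue that $f_2>0$ on all of $(0,\infty)$, using that the \emph{exact} identity $f_2(x)f_2(y)=f_2(\sqrt{xy})^2$ propagates a zero: if $f_2(t)=0$ with $t>0$, taking $y=t$ forces $f_2(\sqrt{xt})=0$ for every $x>0$, i.e. $f_2\equiv0$ on $(0,\infty)$. Since $f_2$ is monotonic on $[0,\infty)$ (Proposition~\ref{Theorem_SR_all_sign_monotonic_MMC}(2)) with $f_2(0)=0$ and $f_2>0$ on $(0,\infty)$, it cannot be non-increasing and hence must be non-decreasing; and $f_2(\sqrt{xy})=\sqrt{f_2(x)f_2(y)}$ gives $f_2(\sqrt{xy})\leq\sqrt{f_2(x)f_2(y)}$, i.e. multiplicative mid-convexity on $(0,\infty)$. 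Then Ostrowski's theorem (Theorem~\ref{Theorem_SR_Fixed_1_continuous}) applies and delivers continuity of $f_2$ on $(0,\infty)$.

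I do not anticipate a serious obstacle: the sign dichotomy, the monotonicity, and the functional equations \eqref{f_1f_2_MMC} are already packaged in the preceding results, so the proof is mostly assembly. The point requiring a touch of care is exactly the step where Lemma~\ref{Lemma_SSR_Fixed_positive_increasing_mmc} needed the right-continuous envelope $f^+$: here we instead have an exact identity rather than an inequality, and it is this that lets us rule out zeros and hand Ostrowski's theorem the positivity it requires; a second minor check is that $\widetilde f(x)=-f(-x)$ really inherits the entrywise-preserving hypothesis, which follows at once from the action of global sign reversal on minors.
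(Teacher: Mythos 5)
Your proof is correct and rests on the same engine as the paper's: establish that each one-sided restriction of $f$ is either identically zero or positive, monotone, and multiplicatively mid-convex, and then invoke Ostrowski's theorem (Theorem~\ref{Theorem_SR_Fixed_1_continuous}). Two of your choices differ from the paper's and are worth recording. First, you propagate a zero of $f_2$ through the exact identity $f_2(x)f_2(y)=f_2(\sqrt{xy})^2$ of \eqref{f_1f_2_MMC}; the paper instead derives a fresh identity $f(\gamma x)f(\gamma a y)=f(\gamma a)f(\gamma xy)$ from a $2\times 3$ SR matrix and specializes $x,y$ there. Both arguments are valid. Second, and more substantially, the paper works through all four sign/monotonicity combinations of Remark~\ref{Remark_all_sign_f1_f2} and, for the combination ``$f_2\geq 0$ and decreasing,'' runs a separate one-sided-limit argument; you instead observe that this case cannot occur for a non-vanishing $f_2$, because Proposition~\ref{Theorem_SR_all_sign_monotonic_MMC}(2) gives monotonicity on the \emph{closed} half-line $[0,\infty)$ and $f_2(0)=0$ forces a non-increasing, non-negative $f_2$ to vanish identically. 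This collapses the case analysis to a single application of Ostrowski's theorem after a sign normalization (replacing $f$ by $-f$ is legitimate, since negation multiplies every $k\times k$ minor by $(-1)^k$ and hence preserves sign regularity), and your reduction of $f_1$ to $f_2$ via $\widetilde{f}(x):=-f(-x)$ is verified correctly for the same reason. I see no gaps.
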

	\begin{proof}
		If $f(0)=0$ and $f[-]$ preserves $m\times n$ SR matrices for all $m,n\geq2$ with $(m,n)\neq(2,2)$, then $f$ must be as specified in Remark~\ref{Remark_all_sign_f1_f2}(2). It suffices to show that $f_1|_{(-\infty,0)}$ and $f_2|_{(0,\infty)}$ are continuous.
		
		We first show that $f_1|_{(-\infty,0)}$ and $f_2|_{(0,\infty)}$ are either identically zero or never vanish. Let $\gamma\in\{\pm1\}$. Suppose $f(\gamma a)\neq0$ for some $a>0$; we claim that $f(\gamma x)\neq0$ for all $x>0$. Consider
		\begin{align*}
			A(x,y,\gamma) := \gamma\begin{pmatrix}
				x & xy & x \\
				a & ay & a
			\end{pmatrix}, \quad x,y\geq0.
		\end{align*}
		Then $A(x,y,\gamma)$ is SR, and so is $f[A(x,y,\gamma)]$. This yields the identity
		\begin{align}\label{f_all_sign_not_zero}
			f(\gamma x)f(\gamma ay) = f(\gamma a)f(\gamma xy), \quad \forall ~ x,y\geq0.
		\end{align}
		Now, suppose $f(\gamma x_0)=0$ for some $x_0>0$. Taking $x = \frac{a^2}{x_0}$ and $y=\frac{x_0}{a}$ in~\eqref{f_all_sign_not_zero} gives $f(\gamma a)=0$, a contradiction. Thus, if $f(\gamma a)\neq0$, then $f(\gamma x)\neq0$ for all $x>0$.
		
		If $f_1|_{(-\infty,0)}\equiv0$ or $f_2|_{(0,\infty)}\equiv0$, then the continuity is immediate. We henceforth assume that $f_1|_{(-\infty,0)}$ and $f_2|_{(0,\infty)}$ are nowhere zero.
		
		We first show $f_2|_{(0,\infty)}$ is continuous. If $f_2$ satisfies condition~(v), then continuity follows from Theorem~\ref{Theorem_SR_Fixed_1_continuous}. If $f_2$ satisfies~(viii), then define the auxiliary function
		\begin{align}\label{auxiliary_g2}
			g_2:(0,\infty)\to\mathbb{R} ~ \text{by} ~ g_2(x):= -f_2(x).
		\end{align}
		Now, $g_2$ is positive and monotonic increasing on $(0,\infty)$, and $g_2(x)g_2(y)=g_2(\sqrt{xy})^2$ for all $x,y>0$. By Theorem~\ref{Theorem_SR_Fixed_1_continuous}, $g_2$ is continuous and hence $f_2$ is also continuous.
		
		Assume $f_2$ satisfies~(vi). We establish continuity of $f_2$ on an arbitrary open interval $I^+=(0,\rho)$, where $0<\rho\leq\infty$. Fix $t\in I^+$, and choose $0<\delta\leq\min\left\{\frac{t}{5}, \frac{\rho-t}{4}\right\}$. Then
		\begin{align}\label{All_sign_continuous_interval}
			0<t+\delta\leq \sqrt{(t+4\delta)(t-\delta)}<\rho.
		\end{align}
		Since $f_2$ is non-negative and decreasing on $[0,\infty)$, using \eqref{f_1f_2_MMC} and \eqref{All_sign_continuous_interval}, we have
		\begin{align*}
			f_2(t+\delta)\geq f_2\left(\sqrt{(t+4\delta)(t-\delta)}\right) = \sqrt{f_2(t+4\delta)f_2(t-\delta)}.
		\end{align*}
		Letting $\delta\to0^+$ and using that $f_2>0$ on $I^+$, we conclude
		\begin{align*}%\label{SR_all_sign_continuity}
			f_2(t^+) \geq \sqrt{f_2(t^+)f_2(t^-)} > 0 \implies f_2(t^+)\geq f_2(t^-).
		\end{align*}
		Since $f_2$ is decreasing, we also have $f_2(t^+)\leq f_2(t)\leq f_2(t^-)$, so $f(t^-)=f(t^+)$, proving continuity for an arbitrary $t\in I^+$. Thus $f_2|_{(0,\infty)}$ is continuous. 
		
		To show that $f_2$ satisfying condition~(vii) is continuous, consider the function $g_2$ as defined in~\eqref{auxiliary_g2}. Since $g_2$ satisfies case~(vi), it follows that $g_2$ is continuous, and therefore $f_2|_{(0,\infty)}$ is also continuous.
		
		For $f_1$, define $g_1:(0,\infty)\to\mathbb{R}$ by $g_1(x):=f_1(-x)$. Then $g_1$ satisfies the same functional identity given in the above remark and fits into one of the cases~(v)~to~(viii), so it is continuous on $(0,\infty)$. Therefore, $f_1|_{(-\infty,0)}$ is continuous. This completes the proof.
	\end{proof}
	
	The following result partially determines the functions that preserve the set of all $m\times n$ SR matrices under entrywise application, for all fixed integers $m, n\geq2$ with $(m,n)\neq(2,2)$.
	
	\begin{theorem}\label{Theorem_SR_All_Sign_Cont_Power_Signum}
		Let $m,n\geq2$ be integers such that $(m,n)\neq(2,2)$. Let $f:\mathbb{R}\to\mathbb{R}$ be a function with $f(0)=0$ such that $f[-]$ preserves the set of all $m\times n$ SR matrices. Then
		\begin{align*}
			f(x) = \begin{cases}
				c_1 |x|^{\alpha_1} ~ \text{for some} ~ \alpha_1\geq0 ~ \text{and some} ~ c_1\in\mathbb{R}, & \text{if } x<0, \\
				c_2 x^{\alpha_2} ~ \text{for some} ~ \alpha_2\geq0 ~ \text{and some} ~ c_2\in\mathbb{R}, & \text{if } x>0.
			\end{cases}
		\end{align*}
	\end{theorem}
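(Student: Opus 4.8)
The plan is to bootstrap from the structural information already assembled. By Remark~\ref{Remark_all_sign_f1_f2}, the hypothesis $f(0)=0$ forces $f$ to agree with $f_1$ on $(-\infty,0]$ and with $f_2$ on $[0,\infty)$, where each of $f_1,f_2$ has a single (weak) sign, is monotonic, and satisfies the multiplicative mid-point identities~\eqref{f_1f_2_MMC}; by Theorem~\ref{Theorem_SR_all_sign_continuous}, $f$ is continuous on $\mathbb{R}\setminus\{0\}$, and its proof also shows that each of $f_1|_{(-\infty,0)}$, $f_2|_{(0,\infty)}$ either vanishes identically or is nowhere zero, and records the identity~\eqref{f_all_sign_not_zero}. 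There are then two things left to do: upgrade the multiplicative identity to an exact power law on each half-line, and rule out negative exponents.

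If $f_2\equiv 0$ on $(0,\infty)$ we may take $c_2=0$ with any $\alpha_2\geq 0$, and similarly for $f_1$; so assume henceforth that $f_1|_{(-\infty,0)}$ and $f_2|_{(0,\infty)}$ are nowhere zero. Specialising~\eqref{f_all_sign_not_zero} to $a=1$ gives, for each $\gamma\in\{\pm 1\}$, the Cauchy-type multiplicative relation $f(\gamma x)f(\gamma y)=f(\gamma)f(\gamma xy)$ for all $x,y\geq 0$. Hence $g_\gamma(x):=f(\gamma x)/f(\gamma)$ is multiplicative on $(0,\infty)$; since $f$ has constant sign on each half-line and is continuous there, $g_\gamma$ is positive and continuous. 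Passing to $s\mapsto \log g_\gamma(e^s)$, which is continuous and additive, Darboux's theorem (Theorem~\ref{Theorem_Darboux}) yields $g_\gamma(x)=x^{\alpha}$ for some real $\alpha$ -- exactly the computation already carried out in the proof of Theorem~\ref{Theorem_SR_Fixed_1_continuous_power}. Taking $\gamma=+1$ gives $f(x)=c_2 x^{\alpha_2}$ on $(0,\infty)$ with $c_2:=f(1)$, and $\gamma=-1$ gives $f(x)=c_1|x|^{\alpha_1}$ on $(-\infty,0)$ with $c_1:=f(-1)$, where $c_1,c_2\in\mathbb{R}$ are nonzero.

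It remains to show $\alpha_1,\alpha_2\geq 0$. For the positive half-line, consider the $2\times 3$ matrix $A=\begin{pmatrix} 1 & 1 & 1 \\ 2 & 1 & 0\end{pmatrix}$ (more generally $\begin{pmatrix} a & b & c \\ d & e & 0\end{pmatrix}$ with $c,d,e>0$, $a,b\geq 0$ and $0<ae<bd$): its $1\times 1$ minors are non-negative and its three $2\times 2$ minors are all strictly negative, so $A$ is SR with sign pattern $(1,-1)$, and since $\max\{m,n\}\geq 3$ it embeds into an $m\times n$ SR matrix by zero-padding. Because $f(0)=0$, the image $f[A]=\begin{pmatrix} f(1) & f(1) & f(1) \\ f(2) & f(1) & 0\end{pmatrix}$ has its two ``corner'' $2\times 2$ minors equal to $-f(1)f(2)$ and $-f(1)^2$, both negative; sign regularity of $f[A]$ then forces the remaining $2\times 2$ minor $f(1)^2-f(1)f(2)=c_2^2(1-2^{\alpha_2})$ to be $\leq 0$, whence $\alpha_2\geq 0$. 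Applying the same reasoning to $-A$ (which has entries in $(-\infty,0]$ and is SR with sign pattern $(-1,-1)$) gives $\alpha_1\geq 0$, completing the proof.

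The only genuinely new ingredient here is the test matrix in the last paragraph: a sign-regular matrix carrying a zero entry whose $f$-image converts a strict inequality between products into its reverse unless the exponent is non-negative. I expect the main point to get right is verifying that this $A$ -- and its zero-padded extensions to size $m\times n$ -- are indeed sign regular, and that the constraint extracted from $f[A]$ is exactly the one claimed; everything else is a faithful transcription of the fixed-sign-pattern arguments (Theorems~\ref{Theorem_SR_Fixed_1_continuous_power} and~\ref{Theorem_SR_all_sign_continuous}).
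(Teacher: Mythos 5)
Your proposal is correct and follows essentially the same route as the paper: continuity from Theorem~\ref{Theorem_SR_all_sign_continuous}, the rank-one test matrices yielding the multiplicative Cauchy equation, Darboux's theorem to obtain the power law on each half-line, and finally a $2\times 3$ sign regular matrix containing a zero entry to force the exponents to be non-negative. Your test matrix $\begin{pmatrix}1&1&1\\2&1&0\end{pmatrix}$ differs only cosmetically from the paper's $\begin{pmatrix}0&x_1&x_2\\x_1&x_1&x_1\end{pmatrix}\oplus 0$; both exploit $f(0)=0$ plus two strictly negative $2\times 2$ minors to pin down the sign of the third, and your verification of the resulting inequality $c_2^2(1-2^{\alpha_2})\leq 0$ is accurate.
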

	\begin{proof}
		By Theorem~\ref{Theorem_SR_all_sign_continuous}, $f$ is continuous on $\mathbb{R}\setminus\{0\}$. Define $f_1:=f|_{(-\infty,0]}$ and $f_2:=f|_{[0,\infty)}$. Taking $a=\gamma=1$ in~\eqref{f_all_sign_not_zero}, we obtain
		\begin{equation}\label{SR_all_sign_f_relation}
			f_2(x)f_2(y)=f_2(1)f_2(xy), \quad \forall ~ x,y\geq0.
		\end{equation}
		By Proposition~\ref{Theorem_SR_all_sign_pos_neg}(1), there exists $\gamma^{\prime}\in\{\pm1\}$ such that $\gamma^{\prime} f_2\geq0$ on $[0,\infty)$. We now consider two cases.
		\begin{itemize}[leftmargin=0pt,label={}]
			\item \textbf{Case 1.} $\gamma^{\prime} f_2(1)=0$: Setting $x=y\geq0$ in \eqref{SR_all_sign_f_relation} yields $f_2\equiv0$ on $[0,\infty)$.
			
			\item \textbf{Case 2.} $\gamma^{\prime} f_2(1)>0$: Similar to the first half of the proof of Theorem~\ref{Theorem_SR_all_sign_continuous}, one can show that $\gamma^{\prime} f_2(x)>0$ for all $x>0$. Now define
			\begin{align}\label{Function1_g_h}
				g(x) := \frac{f_2(x)}{f_2(1)}, ~ x>0, \qquad h(y):=\log g(e^y), ~ y\in\mathbb{R.}
			\end{align}
			Then $g$ is positive, continuous, and multiplicative on $(0,\infty)$. Thus, $h$ is continuous on $\mathbb{R}$ and satisfies the Cauchy functional equation. By Theorem~\ref{Theorem_Darboux}, $h(y)=h(1)y$ for all $y\in\mathbb{R}$. Translating back, we obtain
			\begin{align}\label{f_2_all_sign}
				f_2(x)=f_2(1)x^{h(1)}, \quad \forall~x>0,
			\end{align}
			with $f_2(1)\in\mathbb{R}\setminus\{0\}$. We now show $h(1)\geq0$. Suppose instead that $h(1)<0$. Let $0<x_1<x_2<\infty$, then $0=\gamma^{\prime} f_2(0) <\gamma^{\prime} f_2(x_1)>\gamma^{\prime} f_2(x_2)>0$. Consider the matrix
			\begin{align*}
				A = \begin{pmatrix}
					0 & x_1 & x_2 \\
					x_1 & x_1 & x_1
				\end{pmatrix}\oplus0_{(m-2)\times(n-3)},
			\end{align*}
			which is SR, but $f_2[A]$ fails to be SR, as its contiguous $2\times2$ minors have opposite signs, a contradiction.
		\end{itemize}
		Combining cases 1 and 2, we conclude that $f_2(x)=c_2x^{\alpha}$ for all $x>0$, with $\alpha:=h(1)\geq0$ and $c_2:=f_2(1)\in\mathbb{R}$. 
		
		To determine the form of $f_1$, define the auxiliary function $k:[0,\infty)\to\mathbb{R}$ by $k(x):=f_1(-x)$. Now, $k|_{(0,\infty)}$ is continuous and $k(0)=0$. By a similar argument as above, $k(x)=k(1)x^{q(1)}$ for all $x>0$, where the function $q$ is defined similarly to $h$ in~\eqref{Function1_g_h}. Therefore, $f_1(x)=f_1(-1)|x|^{q(1)}$ for all $x<0$ with $q(1)\geq0$ and $f_1(-1)\in\mathbb{R}$.
		%		
		%		Finally, for arbitrary integers $m,n\geq2$ with $(m,n)\neq(2,2)$, one can either take the transpose of the matrices $A(x,y)$ and $A$, or embed them into larger $m\times n$ SR matrices via zero-padding. Since $f[-]$ preserves $m\times n$ SR matrices, and so all their submatrices including $A(x,y)$ and $A$, the result follows.
	\end{proof}
	
	Observe that in the above theorem, if $c_1=c_2=0$, then $f\equiv0$ on $\mathbb{R}$. In contrast, when $c_1,c_2\neq0$, the function $f$ depends on the value of $\alpha$: for $\alpha=0$, $f$ corresponds to a (scaled) signum function on the intervals $(-\infty,0]$ and $[0,\infty)$; whereas for $\alpha>0$, $f$ represents a power function on each of these intervals. Thus, our goal reduces to study the following questions for each fixed dimension $m,n\geq1$:
	\begin{itemize}
		\item[(i)] Does the entrywise application of the signum function preserve the set of $m\times n$ SR matrices?
		\item[(ii)] Which power functions with positive exponents, when applied entrywise, preserve the class of $m\times n$ SR matrices?
	\end{itemize}
	We address the first question in the next theorem.
	
	\begin{theorem}\label{Theorem_SR_AllSign_Signum}
		Let $m,n\geq1$ be integers. Consider the function $f:\mathbb{R}\to\mathbb{R}$ given by $f(x)=c\sgn|x|$, where $c\in\mathbb{R}\setminus\{0\}$. Then
		\begin{itemize}
			\item[(1)] $f[-]$ preserves the set of all $n\times n$ SR matrices if and only if $n\leq3$.
			\item[(2)] If $m\neq n$, then $f[-]$ preserves the set of all $m\times n$ SR matrices if and only if $\min\{m,n\}\leq2$.
		\end{itemize}
	\end{theorem}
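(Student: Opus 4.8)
The plan is to reduce the whole statement to a question about $0$--$1$ matrices. Since $c\neq0$, multiplying a matrix by $c$ scales each $k\times k$ minor by $c^k$, so it maps the class of SR matrices onto itself; hence, writing $Z(A)$ for the matrix whose $(i,j)$ entry is $1$ when $a_{ij}\neq0$ and $0$ otherwise, the matrix $f[A]$ is SR if and only if $Z(A)$ is SR. Also $f(0)=c\,\sgn|0|=0$, so $f$ applied to a zero-padding of $A$ is the corresponding zero-padding of $f[A]$; and $Z(-A)=Z(A)$, $Z(AP_n)=Z(A)P_n$. Thus the statement becomes: for which $m\times n$ is $Z(A)$ SR for \emph{every} $m\times n$ SR matrix $A$? --- a question invariant under transposition, under padding by zeros, and under the normalizations $A\mapsto-A$, $A\mapsto AP_n$ (Remark~\ref{Remark_SSR_3}).

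First I would normalize: given a nonzero SR matrix $A$, after negating it when its nonzero entries are negative and then post-multiplying by $P_n$ when the resulting matrix has a negative $2\times2$ minor, I may assume $A$ is TN$_2$. Then $Z(A)$ is TN$_2$ as well: a $2\times2$ submatrix of $Z(A)$ has negative determinant only when its two diagonal entries vanish and its two anti-diagonal entries equal $1$, which would force a $2\times2$ submatrix of $A$ with two zero diagonal entries and two positive anti-diagonal entries, i.e.\ a negative $2\times2$ minor of $A$ --- impossible. So $Z(A)$ is sign-consistent at minor-sizes $1$ and $2$.

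This delivers the ``if'' directions at once. When $\min\{m,n\}\le2$ there are no minors of size $>2$, and when $m=n=3$ there is, beyond sizes $1$ and $2$, only a single $3\times3$ minor; in both cases a TN$_2$ matrix of that shape is automatically SR, so $Z(A)$ --- and hence $f[A]$ --- is SR. For the ``only if'' directions it suffices, after padding and transposing, to exhibit one $3\times4$ SR matrix $A_0$ for which $f[A_0]$ fails to be SR: placing $A_0$ (or $A_0^{T}$) in the top-left corner of an $m\times n$ block of zeros yields an SR matrix whose image under $f[-]$ is the zero-padding of the non-SR matrix $f[A_0]$, hence is not SR, and this covers every $n\times n$ with $n\ge4$ and every rectangular $m\times n$ with $\min\{m,n\}\ge3$. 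I would take
\[
A_0=\begin{pmatrix} 1 & 1 & 0 & 0 \\ 1 & 3 & 1 & 0 \\ 0 & 1 & 2 & 1 \end{pmatrix},\qquad\text{so that}\qquad Z(A_0)=\begin{pmatrix} 1 & 1 & 0 & 0 \\ 1 & 1 & 1 & 0 \\ 0 & 1 & 1 & 1 \end{pmatrix}.
\]
A finite verification shows every minor of $A_0$ is nonnegative, so $A_0$ is totally nonnegative and in particular SR; whereas the $3\times3$ minors of $Z(A_0)$ on the column sets $\{1,2,3\}$ and $\{1,3,4\}$ equal $-1$ and $+1$, so $Z(A_0)$, and therefore $f[A_0]$, is not SR.

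The only genuinely non-routine point is checking that this specific $A_0$ is totally nonnegative while its support pattern is already ``rich enough'' to be non-SR; the example must be balanced so that both requirements hold simultaneously, and this is the step I expect to demand the most care. If a hand-picked matrix turns out to be inconvenient, one can instead realize the staircase support of $Z(A_0)$ by a nonnegative tridiagonal matrix and invoke the classical fact that a nonnegative tridiagonal matrix all of whose contiguous $2\times2$ minors are nonnegative is totally nonnegative.
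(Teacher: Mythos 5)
Your proof is correct and follows essentially the same route as the paper: the paper's counterexample is the $3\times4$ totally non-negative matrix $\begin{pmatrix}6&1&0&0\\1&1&1&0\\0&4&5&1\end{pmatrix}$, which has exactly the same support (hence the same image under $f$) as your $A_0$, and it likewise disposes of all remaining dimensions by zero-padding and transposition. Your reduction to the $0$--$1$ indicator matrix $Z(A)$ and the explicit TN$_2$ inheritance argument make the ``if'' direction precise, which the paper only asserts.
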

	\begin{proof}
		We begin by proving (2). Let $m\neq n$ and $d:=\min\{m,n\}$. If $d\leq2$, the conclusion holds immediately. To prove the converse, we need to construct examples of $m\times n$ SR matrices with $d\geq3$ whose images under $f[-]$ are not SR. First, take $d=3$. Consider the following matrix and its image under the entrywise map $f$.
		\begin{align*}
			A = \begin{pmatrix}
				6 & 1 & 0 & 0 \\
				1 & 1 & 1 & 0 \\
				0 & 4 & 5 & 1
			\end{pmatrix}\implies f[A] = \begin{pmatrix}
				c & c & 0 & 0 \\
				c & c & c & 0 \\
				0 & c & c & c
			\end{pmatrix}.
		\end{align*}
		All minors of $A$ are non-negative, so $A$ is SR. However, in $f[A]$, the two contiguous $3\times3$ submatrices have determinants of opposite signs. For $d\geq4$, the argument holds by padding $A$ with zero rows or columns to get the desired dimensions.
		
		We now show (1). Note that for each $n\leq3$, $f[-]$ preserves all $n\times n$ SR matrices. For $n\geq4$, we can extend the above example to a square matrix by adding zero rows or columns. The resulting matrix remains SR, but its image under $f[-]$ fails to be SR. This completes the proof.
	\end{proof}
	
	We now answer the second question posed above: classify the power functions with positive exponents that entrywise preserve the class of $m\times n$ SR matrices, for fixed integers $m,n\geq1$.
	
	\begin{theorem}\label{Theorem_SR_AllSign_Power}
		Let $m,n\geq1$ be integers, and define $d:=\min\{m,n\}$. Consider the function $f:\mathbb{R}\to\mathbb{R}$ given by $f(x)=|x|^{\alpha}$, with $\alpha>0$. Then the following statements are equivalent.
		\begin{itemize}
			\item[(1)] $f[-]$ preserves the set of all $m\times n$ SR matrices.
			\item[(2)] We have the following two cases:
			\begin{itemize}
				\begin{tabular}{p{8cm}p{8cm}}
					\item[(a)] If $m=n$, then:
					\begin{itemize}
						\item[(i)] For $n\leq3:$ $\alpha\in(0,\infty)$.
						\item[(ii)] For $n\geq4:$ $\alpha=1$.
					\end{itemize} &
					\item[(b)] If $m\neq n$, then:
					\begin{itemize}
						\item[(i)] For $d\leq2:$ $\alpha\in(0,\infty)$.
						\item[(ii)] For $d\geq3:$ $\alpha=1$.
					\end{itemize}
				\end{tabular}
			\end{itemize}
		\end{itemize}
	\end{theorem}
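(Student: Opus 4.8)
The plan is to reduce the whole statement to a single construction in size $3\times 4$.

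\textbf{Normalization and an observation.} Since $f(x)=|x|^{\alpha}\geq 0$, the matrix $f[A]$ depends only on $|A|$; and every sign regular matrix has all of its entries (which are its $1\times 1$ minors) of a single sign, so replacing $A$ by $-A$ when $\epsilon_1=-1$ — again sign regular, with the same entrywise modulus — we may assume throughout that $A$ has non-negative entries, whence $f[A]=A^{\circ\alpha}$. For such $A$, the matrix $A^{\circ\alpha}$ again has non-negative entries, and each of its $2\times 2$ minors equals $(a_{ik}a_{jl})^{\alpha}-(a_{il}a_{jk})^{\alpha}$, which has the same sign as, and vanishes exactly when, the corresponding $2\times 2$ minor of $A$ does (because $t\mapsto t^{\alpha}$ is a strictly increasing bijection of $[0,\infty)$). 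Thus $f[A]$ automatically carries a consistent sign pattern in sizes $1$ and $2$, and only minors of size $\geq 3$ remain in question.

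\textbf{The implication $(2)\Rightarrow(1)$.} The case $\alpha=1$ is immediate. If $m=n\leq 2$, or $m\neq n$ with $\min\{m,n\}\leq 2$, then $A^{\circ\alpha}$ has no minor of size $\geq 3$, so by the normalization it is sign regular. The remaining case is $m=n=3$: here $A^{\circ\alpha}$ has non-negative entries, a consistent $\epsilon_2$, and exactly one $3\times 3$ minor, whose sign we may take for $\epsilon_3$; hence $A^{\circ\alpha}$ is sign regular for every $\alpha>0$. This covers all instances of (2) in which the admissible set of exponents is $(0,\infty)$.

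\textbf{Reduction of $(1)\Rightarrow(2)$ to a $3\times 4$ statement.} We must show that whenever $m=n\geq 4$, or $m\neq n$ with $\min\{m,n\}\geq 3$, every $\alpha\neq 1$ fails. It suffices to exhibit, for each $\alpha\in(0,\infty)\setminus\{1\}$, one $3\times 4$ sign regular matrix $A$ with $A^{\circ\alpha}$ not sign regular: bordering a $3\times 4$ sign regular matrix with zero rows and/or columns yields a sign regular matrix of any larger prescribed size (all new minors vanish), and since $(A\oplus 0)^{\circ\alpha}=A^{\circ\alpha}\oplus 0$ while every submatrix of a sign regular matrix is sign regular, the failure persists after bordering; transposition then disposes of the cases with $m>n$. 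Moreover, for a $3\times 4$ matrix with non-negative entries and a consistent $\epsilon_2$, being \emph{not} sign regular means precisely that two of its four $3\times 3$ minors have strictly opposite signs.

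\textbf{The $3\times 4$ construction (the crux).} This is where the genuine work lies and the step I expect to be the main obstacle. Following the pattern of the parametrized families in the proof of Theorem~\ref{Theorem_SR_power_classification}, I would, for each $\alpha\in(0,1)$, write down a one-parameter family $A(t)$ of $3\times 4$ matrices ($t\geq 0$, small) that are sign regular with $\epsilon_2=\epsilon_3$, such that one $3\times 3$ submatrix of $A(t)$ is (or degenerates from) a generalized Vandermonde — whose Hadamard $\alpha$-th power is again a generalized Vandermonde and hence has strictly positive determinant — while a Taylor expansion at $t=0$ forces another $3\times 3$ minor of $A(t)^{\circ\alpha}$ to have a strictly negative leading term in $t$, using that the $\epsilon_2=\epsilon_3$ power law fails below $\alpha=1$ (as witnessed by the singular $3\times 3$ example in the proof of Theorem~\ref{Theorem_SR_power_classification}); for $\alpha>1$ one argues symmetrically with $\epsilon_2\neq\epsilon_3$, exploiting the failure of that power law above $\alpha=1$. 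Then $A(t)$ is sign regular for all small $t\geq 0$, while $A(t)^{\circ\alpha}$ has two $3\times 3$ minors of strictly opposite sign and is therefore not sign regular, which closes the argument. The delicate part — exactly as with the $4\times 4$ families in the proof of Theorem~\ref{Theorem_SR_power_classification}, now one dimension smaller and with two minors to steer at once rather than one — is to choose the entries of $A(t)$ so that the two designated minors behave as required while every $2\times 2$ minor and every remaining $3\times 3$ minor of $A(t)$ retains a single sign; this is bookkeeping on leading Taylor coefficients, not a new idea.
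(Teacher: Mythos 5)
Your normalization, your proof of $(2)\Rightarrow(1)$, and your reduction of $(1)\Rightarrow(2)$ to the existence, for each $\alpha\in(0,\infty)\setminus\{1\}$, of a single $3\times 4$ SR witness whose Hadamard power has two $3\times 3$ minors of strictly opposite signs, are all correct and are in fact exactly the skeleton of the paper's argument: the paper's $4\times 4$ witnesses are literally $3\times 4$ matrices with a zero row appended, and the $m\neq n$, $d\geq 3$ cases are handled there by deleting that zero row and then zero-padding or transposing, just as you propose.

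The genuine gap is that the crux you correctly identify is never carried out: you do not exhibit any witness matrix, for either range of $\alpha$, and you say yourself that this is ``where the genuine work lies.'' A strategy plus an acknowledgement that the delicate bookkeeping remains to be done is not a proof, and here essentially all of the content of the theorem lives in that bookkeeping -- one must simultaneously keep every $2\times 2$ minor and every $3\times 3$ minor of the witness single-signed while forcing two $3\times 3$ minors of its power to separate. For the record, the paper realizes your plan as follows. To kill $\alpha\in(0,1)$ it uses the fixed matrix obtained by appending the column $(1,3,8)^{T}$ to the singular matrix $A_1=\left(\begin{smallmatrix}3&1&2\\1&1&4\\1&2&9\end{smallmatrix}\right)$ from the proof of Theorem~\ref{Theorem_SR_power_classification} (where $\det A_1^{\circ\alpha}<0$ on $(0,1)$ was already established); the second $3\times 3$ minor is controlled not by a Vandermonde argument but by Descartes' rule of signs applied to the exponential polynomial $-8^{\alpha}+9^{\alpha}+12^{\alpha}-16^{\alpha}-27^{\alpha}+32^{\alpha}$ together with a Taylor expansion at $\alpha=0$ showing a double root there and a positive leading coefficient. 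To kill $\alpha>1$ it uses a one-parameter family $A(t)$ of perturbations of the all-ones matrix (with entries such as $1-\tfrac{1}{5}t$ and $1-\tfrac{1}{2}t$ chosen precisely so that all $2\times 2$ minors stay one-signed and all $3\times 3$ minors stay one-signed), for which Taylor expansions in $t$ give leading terms $\tfrac{33}{20}(\alpha^{3}-\alpha^{2})t^{3}>0$ and $-\alpha^{2}t^{2}<0$ for the two designated minors. So your outline is viable, but until such matrices are produced and their sign regularity verified, the proof is incomplete.
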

	\begin{proof}
		We begin by proving (2)$\implies$(1). First, consider the case $m=n=3$. Let $A$ be a $3\times 3$ SR matrix. For all $\alpha>0$, the $2\times2$ minors of $f[A]$ have the same sign. Since $f[A]$ has a unique $3\times 3$ minor, $\det f[A]$, it follows that $f[A]$ is sign regular for all $\alpha>0$. The remaining cases are straightforward. We next show (1)$\implies$(2)(a). The case $n\leq3$ is trivial. For $n=4$, we show that for each $\alpha\in(0,\infty)\setminus\{1\}$, there exists a $4\times4$ SR matrix $A$ such that $f[A]$ contains at least two $3\times3$ minors with opposite signs. For $n\geq5$, the matrix $A\oplus0_{(n-4)\times(n-4)}$ provides a suitable example.
		
		Let $n=4$. To exclude $\alpha\in(0,1)$, consider the following matrix along with two of its submatrices:
		\begin{align*}
			A=\begin{pmatrix}
				3 & 1 & 2 & 1 \\
				1 & 1 & 4 & 3 \\
				1 & 2 & 9 & 8 \\
				0 & 0 & 0 & 0
			\end{pmatrix}, \qquad A_1 = \begin{pmatrix}
				3 & 1 & 2 \\
				1 & 1 & 4 \\
				1 & 2 & 9
			\end{pmatrix}, \qquad A_2 = \begin{pmatrix}
				1 & 2 & 1 \\
				1 & 4 & 3 \\
				2 & 9 & 8
			\end{pmatrix}.
		\end{align*}
		By a simple calculation, one can verify that $A$ is an SR matrix with sign pattern $\epsilon=(1,1,1,\epsilon_4)$, where $\epsilon_4\in\{\pm1\}$. We claim that for all $\alpha\in(0,1)$, we have $\det A_1^{\circ\alpha}<0$ and $\det A_2^{\circ\alpha}>0$.
		
		In the proof of Theorem~\ref{Theorem_SR_power_classification}, we established that $\det A_1^{\circ\alpha}<0$ for all $\alpha\in(0,1)$. We next show that $\det A_2^{\circ\alpha}>0$ for all $\alpha\in(0,1)$. Consider the expression
		\[\det A_2^{\circ\alpha} = -8^{\alpha} + 9^{\alpha} + 12^{\alpha} -16^{\alpha} -27^{\alpha} + 32^{\alpha}.\]
		It follows from Theorem~\ref{Jameson} that the above function has at most three real roots. Furthermore, a Taylor expansion of $\det A_2^{\circ\alpha}$ at $\alpha=0$ gives $\det A_2^{\circ\alpha} = c\alpha^2 + O(\alpha^3)$, where
		\begin{align*}%\label{Taylor_A2}
			c = \frac{1}{2}\left(-(\log8)^2 +(\log9)^2 +(\log12)^2 -(\log16)^2 -(\log27)^2 +(\log32)^2\right)>0.
		\end{align*}	
		This shows that $\det A_2^{\circ\alpha}$ has a double root at $\alpha=0$, and moreover
		\[\displaystyle{\lim_{\alpha\to0^+}\frac{\det A_2^{\circ\alpha}}{\alpha^2}} = c>0.\] 
		Thus $\det A_2^{\circ\alpha}>0$ for sufficiently small $\alpha>0$. Also note that $\det A_2^{\circ\alpha}>0$ at $\alpha=1$. Thus, $\det A_2^{\circ\alpha}>0$ for all $\alpha\in(0,1)$; otherwise, it would have more than three real roots, contradicting Theorem~\ref{Jameson}. Hence, $A^{\circ\alpha}$ is not SR for all $\alpha\in(0,1)$.
		
		To discard powers in $(1,\infty)$, examine the following matrix and its submatrices for $0\leq t<1$:
		\begin{align*}
			A(t) :=\begin{pmatrix}
				1 & 1 & 1 & 1 \\
				1-\frac{1}{5}t & 1 & 1+2t & 1+3t \\[2pt]
				1-\frac{1}{2}t & 1 & 1+5t & 1+7t \\[2pt]
				0 & 0 & 0 & 0
			\end{pmatrix}, ~ A_1(t) :=\begin{pmatrix}
				1 & 1 & 1 \\
				1-\frac{1}{5}t & 1 & 1+2t \\[2pt]
				1-\frac{1}{2}t & 1 & 1+5t
			\end{pmatrix}, ~ A_2(t) :=\begin{pmatrix}
				1 & 1 & 1 \\
				1 & 1+2t & 1+3t \\
				1 & 1+5t & 1+7t
			\end{pmatrix}.
		\end{align*}
		For the matrix $A(t)$, observe that all $2\times2$ minors take the form $ut+vt^2$ (with $u,v\geq0$) or $t(1-t)$; all $3\times3$ minors are of the form $-ut^2$ for some $u\geq0$; and $\det A(t)=0$. Thus, $A(t)$ is an SR matrix for all $0<t<1$ with sign pattern $\epsilon=(1,1,-1,\epsilon_4)$, where $\epsilon_4\in\{\pm1\}$. We now show that for every $\alpha>1$, there exists $t\in(0,1)$ such that $\det A_1(t)^{\circ\alpha}>0>\det A_2(t)^{\circ\alpha}$. 
		
		Let $\alpha_0\in(1,\infty)$. Expanding $\det A_1(t)^{\circ\alpha_0}$ using a Taylor series at $t=0$ gives
		\begin{align*}
			\det A_1(t)^{\circ\alpha_0} = \frac{33}{20}(\alpha_0^3-\alpha_0^2)t^3 + O(t^4) \implies 	\displaystyle{\lim_{t\to0^+}\frac{\det A_1(t)^{\circ\alpha_0}}{t^3}} = \frac{33}{20}(\alpha_0^3-\alpha_0^2)>0.
		\end{align*}
		Thus, for sufficiently small $t>0$, say $t_1$, we have $\det A_1(t_1)^{\circ\alpha_0}>0$. Similarly, a Taylor expansion of $\det A_2(t)^{\circ\alpha_0}$ at $t=0$ yields
		\begin{align*}
			\det A_2(t)^{\circ\alpha_0} = -\alpha_0^2t^2 + O(t^3) \implies \displaystyle{\lim_{t\to0^+}\frac{\det A_2(t)^{\circ\alpha_0}}{t^2}} = -\alpha_0^2<0.
		\end{align*}
		Hence, for sufficiently small $t>0$, say $t_2$, we have $\det A_2(t_2)^{\circ\alpha_0}<0$. Letting $\tilde{t}:=\min\{t_1,t_2\}$, we conclude that the matrix $A(\tilde{t})^{\circ\alpha_0}$ contains two $3\times3$ minors of opposite signs, thereby establishing (1)$\implies$(2)(a).
		
		We now show (1)$\implies$(2)(b). Note that the case $d\leq2$ is trivial. For $d=3$, the required examples are provided by the $3\times4$ submatrices of $A$ and $A(t)$, obtained by removing the final row. For $d\geq4$, we embed these matrices into larger dimensions SR matrices via zero-padding or transposition. This concludes the proof.
	\end{proof}
	
	With these results in hand, we now prove Theorem~\ref{Theorem_SR_Non-Fixed_Classification}.
	
	\begin{proof}[Proof of Theorem \ref{Theorem_SR_Non-Fixed_Classification}]
		The implications (2)(a)$\implies$(1) and (2)(b)$\implies$(1) are immediate, so we now focus on proving the converse: (1)$\implies$(2). The cases $m=n=1,2$, and $m\neq n$ with $d=1$ are straightforward. We thus consider the remaining cases: $m=n\geq3$ and $m\neq n$ with $d\geq 2$.
		
		If $f(0)\neq0$, by Proposition~\ref{Theorem_SR_all_sign_monotonic_MMC}(1), $f(x)=f(0)$ for all $x\in\mathbb{R}$. If $f(0)=0$, then by Theorem~\ref{Theorem_SR_All_Sign_Cont_Power_Signum}, the functions $f|_{(-\infty,0]}$ and $f|_{[0,\infty)}$ are each either the zero function, a power function, or a scaled signum function. By Theorems~\ref{Theorem_SR_AllSign_Signum}~and~\ref{Theorem_SR_AllSign_Power}, one has a complete classification of the functions among these which preserve SR matrices entrywise. This completes the proof.
	\end{proof}
	
	We now address the problem of characterizing all functions that preserve the class of $m\times n$ SSR matrices under entrywise application, for all fixed dimensions $m$ and $n$. As a first step, we derive several key properties that any such function must satisfy. We begin with the following observation.
	
	\begin{rem}\label{Remark_SSR_Property_Inherit}
		The properties of entrywise preservers of $m\times n$ SSR matrices are inherited by preservers of $m^{\prime}\times n^{\prime}$ SSR matrices for all $m^{\prime}\geq m$ and $n^{\prime}\geq n$. %This follows from Theorem~\ref{Theorem_line_insertion_SSR}, which allows the embedding of lower-dimensional SSR test matrices into higher-dimensional SSR matrices. As a result, any function that preserves SSR matrices in higher dimensions must also preserve all of their submatrices, including the embedded lower-dimensional test matrix. Consequently, the properties satisfied by preservers of lower-dimensional SSR matrices must also be satisfied by preserver of higher-dimensional SSR matrices.}
\end{rem}

We use this fact in the subsequent proofs.

\begin{theorem}\label{Theorem_SSR_all_sign_pos_neg}
	Let $m,n\geq1$ be integers such that $\min\{m,n\}=1$, and let $f:\mathbb{R}\setminus\{0\}\to\mathbb{R}$. The following are equivalent.
	\begin{itemize}
		\item[(1)] $f[-]$ preserves the set of all $m\times n$ SSR matrices.
		\item[(2)]
		\begin{itemize}
			\item[(a)] If $m=n=1$, then $f(x)\neq0$ for all $x\in\mathbb{R}\setminus\{0\}$.
			\item[(b)] If $m\neq n$, then $f|_{(-\infty,0)}$ and $f|_{(0,\infty)}$ are each either a positive or a negative function.
		\end{itemize}
	\end{itemize}
\end{theorem}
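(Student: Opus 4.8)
The plan is to exploit that when $\min\{m,n\}=1$ the matrices involved are, up to transposition, single row vectors, so their only minors are the $1\times1$ minors --- the entries themselves. Consequently a $1\times n$ matrix $A=(a_1,\dots,a_n)$ is SSR exactly when all $a_i$ are nonzero and share a common sign, and $f[A]$ is SSR exactly when all $f(a_i)$ are nonzero and share a common sign. First I would remove the transpose: since transposition preserves every minor it preserves SSR-ness (with the same sign pattern), and $f[A^{\top}]=f[A]^{\top}$; hence the case $n=1$, $m\geq2$ reduces by transposition to the case $m=1$, $n\geq2$, and I may assume $m=1$ throughout.

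For part (a), where $m=n=1$, a $1\times1$ matrix $(a)$ is SSR iff $a\neq0$ and $f[(a)]=(f(a))$ is SSR iff $f(a)\neq0$, so $(1)\Leftrightarrow(2)(a)$ is immediate.

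For part (b), with $m=1$ and $n\geq2$, the implication $(2)\Rightarrow(1)$ is routine: given a $1\times n$ SSR matrix $A=(a_1,\dots,a_n)$, all entries lie on a single open half-line, so all the values $f(a_i)$ lie either in $f\big((0,\infty)\big)$ or in $f\big((-\infty,0)\big)$, a set consisting of values of a single strict sign by hypothesis; thus $f[A]$ is again SSR. For $(1)\Rightarrow(2)$, testing $f[-]$ on the $1\times n$ SSR matrix $(x,\dots,x)$ with $x>0$ (respectively $x<0$) forces $f(x)\neq0$ on $(0,\infty)$ (respectively on $(-\infty,0)$); and if $f(x_1)$ and $f(x_2)$ had opposite signs for some $x_1,x_2>0$, then the $1\times n$ SSR matrix $(x_1,x_2,x_2,\dots,x_2)$ --- legitimate precisely because $n\geq2$ and all its entries are positive --- would be sent by $f[-]$ to a row vector carrying entries of both signs, hence not SSR, a contradiction. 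Running the same test on $(-\infty,0)$ shows $f|_{(-\infty,0)}$ is likewise sign-definite, which is $(2)(b)$.

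I do not anticipate a genuine obstacle here: once one recognizes that SSR-ness of a single vector is precisely ``all entries nonzero and equi-signed'', each implication reduces to a one-line check. The only points that merit a careful sentence are the reduction via transposition and the use of $n\geq2$ (equivalently $m\neq n$ in the presence of $\min\{m,n\}=1$) to have room for the two-entry test vector that detects a sign change within one half-line.
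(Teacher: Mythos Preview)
Your proposal is correct and follows essentially the same approach as the paper: both recognize that SSR for a single row (or column) vector means precisely ``all entries nonzero and of one sign,'' and both detect a failure of sign-definiteness on a half-line by feeding in a test vector containing two entries $x_1,x_2$ of the same sign with $f(x_1)f(x_2)<0$. The only cosmetic difference is that the paper proves the $1\times 2$ case and then invokes an inheritance remark (embedding smaller SSR matrices into larger ones) to cover $\max\{m,n\}\geq 3$, whereas you build the $1\times n$ test vector $(x_1,x_2,\dots,x_2)$ directly; your version is slightly more self-contained but otherwise identical in substance.
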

\begin{proof}
	Note that (2)$\implies$(1) and (1)$\implies$(2)(a) are immediate. We next show (1)$\implies$(2)(b). Using Remark~\ref{Remark_SSR_Property_Inherit}, the case (1)$\implies$(2)(a) implies that $f$ is nowhere zero. Now assume that $f$ is neither positive nor negative on $(0,\infty)$. Then there exist $0<x_1<x_2<\infty$ such that $f(x_1)f(x_2)<0$. Note that the $1\times2$ matrix $A_1 = \begin{pmatrix}
		x_1 & x_2
	\end{pmatrix}$ is SSR, but since the signs of $f(x_1)$ and $f(x_2)$ differ, the matrix $f[A_1]$ is not SSR, violating the assumption. Therefore, $f$ must be either positive or negative on $(0,\infty)$. A similar argument applies on $(-\infty,0)$. For $2\times1$ SSR matrices, the same conclusion follows by transposing $A_1$. When $\min\{m,n\}=1$ and $\max\{m,n\}\geq3$, the result follows by Remark~\ref{Remark_SSR_Property_Inherit}, completing the proof.
\end{proof}
The preceding result provides a complete characterization of entrywise preservers of $m\times n$ SSR matrices for all $m,n\geq1$ with $\min\{m,n\}$=1. We now consider the case where $m,n\geq2$.

\begin{prop}\label{Proposition_geq2_SSR}
	Let $m,n\geq2$ be integers and suppose $f:\mathbb{R}\setminus\{0\}\to\mathbb{R}$ entrywise preserves the set of all $m\times n$ SSR matrices. Then the functions $f|_{(-\infty,0)}$ and $f|_{(0,\infty)}$ are injective, and either positive or negative.
\end{prop}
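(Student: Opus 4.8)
The plan is to handle the three assertions separately, obtaining the sign statements (and the incidental non-vanishing of $f$) essentially for free from the results already in hand, and then proving injectivity by means of one small explicit example. Since $m,n\ge 2$, iterating Theorem~\ref{Theorem_line_insertion_SSR} lets one grow any $1\times1$, $1\times2$, or $2\times1$ SSR matrix into an $m\times n$ SSR matrix by adjoining border rows and columns, and because every submatrix of an SSR matrix is SSR, it follows that $f[-]$ automatically preserves the SSR matrices of these smaller shapes --- this is precisely the inheritance principle recorded in Remark~\ref{Remark_SSR_Property_Inherit}. Feeding the $1\times1$ case into Theorem~\ref{Theorem_SSR_all_sign_pos_neg}(2)(a) then yields $f(x)\ne 0$ for every $x\in\mathbb{R}\setminus\{0\}$, while feeding the $1\times2$ (and $2\times1$) case into Theorem~\ref{Theorem_SSR_all_sign_pos_neg}(2)(b) yields that each of $f|_{(-\infty,0)}$ and $f|_{(0,\infty)}$ is either a positive or a negative function. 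So the ``positive or negative'' half of the claim requires no new work.

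For injectivity I would argue by contradiction using a symmetric $2\times2$ witness. Fix $\gamma\in\{\pm1\}$ to select the half-line, and suppose $f(\gamma s)=f(\gamma t)=:c$ for some $s,t>0$ with $s\ne t$; by the previous paragraph $c\ne0$. The key point is that for any such $s\ne t$ the matrix
\[
A:=\begin{pmatrix}\gamma s & \gamma t\\[2pt] \gamma t & \gamma s\end{pmatrix}
\]
is SSR: its four $1\times1$ minors all share the sign of $\gamma$, and $\det A=\gamma^{2}(s^{2}-t^{2})=s^{2}-t^{2}\ne0$, so the $2\times2$ minor has a well-defined sign. Applying $f$ entrywise collapses $A$ to the matrix with every entry equal to $c$, whose sole $2\times2$ minor is $0$; such a matrix is not SSR. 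To make this contradict the hypothesis in dimension $m\times n$, I would extend $A$ to an $m\times n$ SSR matrix $\widetilde A$ by iterating Theorem~\ref{Theorem_line_insertion_SSR}, keeping $A$ as a contiguous block; then $f[\widetilde A]$ is SSR, hence so is the corresponding $2\times2$ block $f[A]$, contradicting $\det f[A]=0$. Taking $\gamma=1$ and $\gamma=-1$ gives injectivity on $(0,\infty)$ and on $(-\infty,0)$ respectively.

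I do not expect a genuine obstacle here, since the proposition is a short lemma and the argument above is close to complete. The two points that need a little care are (i) checking that the symmetric $2\times2$ matrix is actually SSR --- so that one is entitled to apply the hypothesis to it --- and that a $2\times2$ matrix with all entries equal cannot be SSR, both of which are immediate; and (ii) the lift of the $2\times2$ witness to the prescribed dimensions, where one must invoke Theorem~\ref{Theorem_line_insertion_SSR} rather than padding with zeros, since a zero entry would destroy strict sign regularity. It is also worth recording that the hypothesis $m,n\ge2$ enters \emph{only} in this lift, and that injectivity genuinely fails when $\min\{m,n\}=1$ (a positive constant function preserves all $1\times n$ SSR matrices yet is not injective), in agreement with Theorem~\ref{Theorem_SSR_all_sign_pos_neg} not asserting it there.
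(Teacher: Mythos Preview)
Your proof is correct and follows essentially the same approach as the paper. The only cosmetic difference is the choice of $2\times2$ witness for injectivity: the paper uses $A=\begin{pmatrix} x_1 & x_2\\ 1 & 1\end{pmatrix}$ (so that $\det f[A]=f(1)(f(x_1)-f(x_2))=0$), whereas you use the symmetric matrix $\begin{pmatrix}\gamma s & \gamma t\\ \gamma t & \gamma s\end{pmatrix}$; both work equally well, and you are somewhat more explicit than the paper about invoking Theorem~\ref{Theorem_line_insertion_SSR} to lift the $2\times2$ example to size $m\times n$.
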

\begin{proof}
	By Remark~\ref{Remark_SSR_Property_Inherit}, Theorem~\ref{Theorem_SSR_all_sign_pos_neg}(2)(b) implies that the restrictions $f|_{(-\infty,0)}$ and $f|_{(0,\infty)}$ are each either positive or negative. Next, to show that $f|_{(0,\infty)}$ is injective, suppose, for contradiction, that there exist $0<x_1\neq x_2<\infty$ such that $f(x_1)=f(x_2)$. Then $A=\begin{pmatrix}
		x_1 & x_2 \\
		1 & 1
	\end{pmatrix}$ is SSR, but $\det f[A]=0$, a contradiction. A similar argument applies for the interval $(-\infty,0)$.
\end{proof}

We now classify a class of functions that entrywise preserve the set of all $2\times2$ SSR matrices with additional constraints.

\begin{lemma}\label{Lemma_2by2_SSR}
	Let $f:\mathbb{R}\setminus\{0\}\to\mathbb{R}$ be a function that satisfies the following conditions:
	\begin{itemize}
		\item[(1)] $f|_{(-\infty,0)}$ and $f|_{(0,\infty)}$ are each either a positive function or a negative function.
		\item[(2)] The restrictions $f|_{(-\infty,0)}$ and $f|_{(0,\infty)}$ are both injective functions.
		\item[(3)] The functions $g_1(x)=\frac{f(x)}{f(-1)}$ and $g_2(x)=\frac{f(x)}{f(1)}$ are multiplicative on $(-\infty,0)$ and $(0,\infty)$, respectively.
	\end{itemize}
	Then $f[-]$ preserves the set of all $2\times2$ SSR matrices. The converse holds if the restricted functions $f|_{(-\infty,0)}$ and $f|_{(0,\infty)}$ are onto on their respective codomains (see part~(1)).
\end{lemma}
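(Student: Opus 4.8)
The plan is to exploit the elementary characterization that a $2\times 2$ matrix $\begin{pmatrix}a&b\\c&d\end{pmatrix}$ with nonzero entries is SSR exactly when all four entries share a common sign and $ad-bc\neq 0$ (the sign of the $2\times 2$ minor being unconstrained). Hence $f[A]$ is SSR if and only if $f(a),f(b),f(c),f(d)$ have a common sign and $\det f[A]\neq 0$. The sign condition is automatic from hypothesis~(1): an SSR $2\times 2$ matrix has all entries in $(0,\infty)$ or all in $(-\infty,0)$, and on each of these half-lines $f$ is sign-constant and nonvanishing, so in particular $f(1),f(-1)\neq 0$. For the determinant condition I would rewrite~(3) as the functional identities $f(a)f(d)=f(1)f(ad)$ for all $a,d>0$ and $f(a)f(d)=f(-1)f(-ad)$ for all $a,d<0$ (this is exactly what the multiplicativity of $g_2$, resp.\ $g_1$, unwinds to, after the sign bookkeeping on $(-\infty,0)$). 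Then for an SSR matrix $A$ with positive entries, $\det f[A]=f(1)\bigl(f(ad)-f(bc)\bigr)\neq 0$, since $ad\neq bc$ because $A$ is SSR, $f|_{(0,\infty)}$ is injective by~(2), and $f(1)\neq 0$; the negative-entry case is identical with $f(-1)$ in place of $f(1)$. This establishes (1)--(3)$\implies$$f[-]$ preserves all $2\times 2$ SSR matrices.

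For the converse, assume $f[-]$ preserves all $2\times 2$ SSR matrices and that $f|_{(-\infty,0)}$ and $f|_{(0,\infty)}$ are onto their respective codomains. Since $(m,n)=(2,2)$ satisfies $m,n\geq 2$, Proposition~\ref{Proposition_geq2_SSR} immediately yields conditions (1) and (2), so only (3) remains. Fix $x,y>0$ and put $r:=f(x)f(y)/f(xy)$; by~(1) the sign of $r$ matches the codomain of $f|_{(0,\infty)}$, so onto-ness furnishes some $b>0$ with $f(b)f(xy)=f(x)f(y)$. The matrix $B:=\begin{pmatrix}x&b\\xy&y\end{pmatrix}$ has positive entries and $\det B=xy(1-b)$; if $b\neq 1$, then $B$ is SSR, hence $f[B]$ is SSR and $\det f[B]=f(x)f(y)-f(b)f(xy)\neq 0$, contradicting the choice of $b$. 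Therefore $b=1$, i.e.\ $f(x)f(y)=f(1)f(xy)$, which is precisely the multiplicativity of $g_2$. Running the identical argument with the all-negative matrix $\begin{pmatrix}-x&-b\\-xy&-y\end{pmatrix}$ (whose determinant is again $xy(1-b)$) gives $f(-x)f(-y)=f(-1)f(-xy)$, the multiplicativity of $g_1$. This completes the converse.

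I expect the substance of the argument to lie in two places, neither of them deep. The first is purely bookkeeping: since $(-\infty,0)$ is not closed under multiplication, ``$g_1$ multiplicative on $(-\infty,0)$'' must be read through the reflection $t\mapsto -t$, and one must check that the relevant products and quotients of $f$-values land their preimages back in the correct half-line --- which~(1) guarantees. The second, and the real obstacle, is that the converse genuinely requires the onto-ness hypothesis: it is used only, but essentially, to produce the auxiliary entry $b$ with $f(b)f(xy)=f(x)f(y)$, and without a value of $f$ attaining $r$ the contradiction step collapses; this is exactly why the lemma states the converse conditionally. The remaining ingredients --- the reduction of $2\times 2$ SSR-preservation to the single inequality $\det f[A]\neq 0$, the appeal to Proposition~\ref{Proposition_geq2_SSR} for~(1) and~(2), and the determinant computations --- are routine.
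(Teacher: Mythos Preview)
Your proof is correct and follows essentially the same approach as the paper's: both the forward direction (multiplicativity plus injectivity forces $\det f[A]=f(\pm 1)\bigl(f(\pm ad)-f(\pm bc)\bigr)\neq 0$) and the converse (invoke Proposition~\ref{Proposition_geq2_SSR} for (1)--(2), then use surjectivity to find a preimage $b$, build a $2\times 2$ matrix whose $f$-image is singular, and conclude $b=1$) coincide with the paper's argument up to the choice of test matrix. Your additional remarks on interpreting ``$g_1$ multiplicative on $(-\infty,0)$'' via the reflection $t\mapsto -t$, and on why surjectivity is genuinely needed, are well taken and make explicit what the paper leaves implicit.
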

\begin{proof}
	Let $A=\begin{pmatrix}
		x_1 & x_2 \\
		x_3 & x_4
	\end{pmatrix}$ be a $2\times2$ SSR matrix. Without loss of generality, assume that $x_i>0$. By condition~(1), the entries of $f[A]$ are non-zero and have the same sign. Next, we claim that $\det f[A]\neq0$. Note that
	\begin{align*}
		\det f[A] = f(1)^2\left(g_2(x_1)g_2(x_4)- g_2(x_2) g_2(x_3)\right)=f(1)^2\left(g_2(x_1x_4)-g_2(x_2x_3)\right)\neq0,
	\end{align*}
	where the second equality follows from condition~(3), and the final inequality holds because $x_1x_4\neq x_2x_3$, and $f|_{(0,\infty)}$ is non-zero and injective; consequently, the same holds for $g_2$.
	
	We now show the converse assuming that the restricted functions $f|_{(-\infty,0)}$ and $f|_{(0,\infty)}$ are onto on their respective codomains. Suppose $f[-]$ preserves the set of all $2\times2$ SSR matrices. Then, by Proposition~\ref{Proposition_geq2_SSR}, $f$ satisfies conditions~(1)~and~(2). It remains to show that $g_1$ and $g_2$ are multiplicative. To show $g_2$ is multiplicative, it suffices to show that $f(xy)f(1)=f(x)f(y)$ for all $x,y>0$. Let $x,y>0$. Then, there exists $\alpha$ in the codomain of $f|_{(0,\infty)}$ such that $\alpha f(xy)=f(x)f(y)$. Since $f|_{(0,\infty)}$ is onto, there exists $\beta>0$ such that $f(\beta)=\alpha$. Take $A=\begin{pmatrix}
		x & xy \\
		\beta & y
	\end{pmatrix}$. Then $\det f[A]=0$. Since $f[-]$ preserves $2\times2$ SSR matrices, it follows that $\det A=0$. Thus $\beta=1$, and hence $g_2$ is multiplicative. Similarly, one can prove that $g_1$ is multiplicative. This concludes the proof.
\end{proof}

We next establish that entrywise preservers of $m\times n$ SSR matrices, with $m,n\geq2$ and $(m,n)\neq(2,2)$, are continuous on $\mathbb{R}\setminus\{0\}$. Furthermore, we show that such functions must be power functions on each of the intervals $(-\infty,0)$ and $(0,\infty)$.

\begin{theorem}\label{Theorem_SSR_all_sign_f_form_power}
	Let $m,n\geq2$ be integers with $(m,n)\neq(2,2)$. Suppose $f:\mathbb{R}\setminus\{0\}\to\mathbb{R}$ is a function that entrywise preserves the set of all $m\times n$ SSR matrices. Then
	\begin{align*}
		f(x) = \begin{cases}
			c_1|x|^{\alpha} ~ \text{for some $\alpha,c_1\in\mathbb{R}\setminus\{0\}$}, & \text{if} ~ x<0, \\
			c_2x^{\alpha} ~ \text{for some $\alpha,c_2\in\mathbb{R}\setminus\{0\}$}, & \text{if} ~ x>0.
		\end{cases}
	\end{align*}
\end{theorem}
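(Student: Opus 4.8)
The plan is to follow the blueprint of Theorems~\ref{Theorem_SR_all_sign_continuous}~and~\ref{Theorem_SR_All_Sign_Cont_Power_Signum}: first prove continuity of $f$ on $\mathbb{R}\setminus\{0\}$, then extract a multiplicative functional equation, then invoke Darboux. The new difficulty compared with the sign-regular case is that SSR forbids zero minors, so each rank-one matrix used there must be replaced by a one-parameter family of genuinely SSR matrices, and the common sign $\epsilon_2$ of the $2\times2$ minors of the \emph{image} has to be pinned down by hand. By passing to transposes we may assume $n\ge3$, so $2\times3$ matrices are available and may be enlarged to $m\times n$ SSR matrices using Theorem~\ref{Theorem_line_insertion_SSR}; by replacing $f$ with $-f$ it suffices to treat $f_2:=f|_{(0,\infty)}$ under the extra hypothesis $f_2>0$ and to show $f_2(x)=f_2(1)x^{\alpha}$ with $\alpha\ne0$, because the restriction $f_1:=f|_{(-\infty,0)}$ then follows by running the same argument on $\widetilde f(x):=f(-x)$ (if $A$ is positive and SSR then $-A$ is SSR, whence $\widetilde f[A]=f[-A]$ is SSR). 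Throughout, Proposition~\ref{Proposition_geq2_SSR} gives that $f_2$ is injective and one-signed. A first step is monotonicity of $f_2$: for $0<x_1<x_2<x_3$ the matrix with rows $(x_1,x_2,x_3)$ and $(x_2,x_2,x_2)$ is SSR with pattern $(1,-1)$ (its $2\times2$ minors are the nonzero numbers $x_2(x_i-x_j)$, $i<j$), and its row-swapped variant realizes $(1,1)$; if $f_2$ were not monotone, applying $f$ entrywise to one of these would produce an image with two $2\times2$ minors of opposite sign. Hence $f_2$ is monotone, so it has at most countably many (jump) discontinuities, and the right-limit $f_2^{+}(x):=\lim_{t\to x^{+}}f_2(t)$ is monotone, positive, and continuous if and only if $f_2$ is.

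For continuity, assume $f_2$ is increasing; the decreasing case is symmetric after replacing $f_2$ by $x\mapsto f_2(1/x)$ and right- by left-limits. As in the proof of Lemma~\ref{Lemma_SSR_Fixed_positive_increasing_mmc}, for $0<x<y$, $c>0$ and $\delta>0$ consider
\[
C_\delta=\begin{pmatrix}\sqrt{xy}+\delta & x+\delta & c\\ y+\delta & \sqrt{xy}+\delta & c\end{pmatrix},
\]
whose three $2\times2$ minors are $-\delta(\sqrt{x}-\sqrt{y})^{2}$, $c(\sqrt{xy}-y)$ and $c(x-\sqrt{xy})$ — all negative — so $C_\delta$ is SSR with pattern $(1,-1)$. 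Then $f[C_\delta]$ is SSR with positive entries, so its three $2\times2$ minors share a common sign $\eta$; the minor in columns $1,3$ equals $f(c)\bigl(f(\sqrt{xy}+\delta)-f(y+\delta)\bigr)<0$ by monotonicity, forcing $\eta=-1$, and then the minor in columns $1,2$ yields $f(\sqrt{xy}+\delta)^{2}<f(x+\delta)f(y+\delta)$. Letting $\delta\to0^{+}$ gives $f_2^{+}(\sqrt{xy})^{2}\le f_2^{+}(x)f_2^{+}(y)$ for all $x,y>0$, so $f_2^{+}$ is positive, nondecreasing and multiplicatively mid-convex, hence continuous by Ostrowski's theorem (Theorem~\ref{Theorem_SR_Fixed_1_continuous}); therefore $f_2$ is continuous.

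With $f_2$ continuous, fix $x>0$ and $y>1$, and for $\delta>0$ put
\[
A_\delta=\begin{pmatrix}x & xy & x\\ 1 & y+\delta & 1+\delta\end{pmatrix},\qquad
B_\delta=\begin{pmatrix}x & xy & x\\ 1+\delta & y+\delta & 1\end{pmatrix},
\]
whose $2\times2$ minors are $x\delta,\ x\delta,\ x\delta(y-1)$ and $-x\delta(y-1),\ -x\delta,\ -x\delta$; thus $A_\delta$ is SSR$(1,1)$ and $B_\delta$ is SSR$(1,-1)$. As before, the columns-$\{1,3\}$ minor of each image determines its $\epsilon_2$ — $+1$ for $f[A_\delta]$ and $-1$ for $f[B_\delta]$, once the signs are tracked, independently of the direction of monotonicity — so the columns-$\{1,2\}$ minors give $f_2(x)f_2(y+\delta)>f_2(xy)f_2(1)$ and $f_2(x)f_2(y+\delta)<f_2(xy)f_2(1+\delta)$; letting $\delta\to0^{+}$ and using continuity forces
\[
f_2(x)f_2(y)=f_2(1)f_2(xy),\qquad x>0,\ y>1.
\]
A brief bootstrap — given $u,v>0$, pick $t>\max\{1/u,1/v\}$ and combine this identity at the pairs $(u,t)$, $(v,t)$, $(ut,vt)$ and $(uv,t^{2})$ — upgrades it to multiplicativity of $g_2:=f_2/f_2(1)$ on all of $(0,\infty)$. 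Then $h(y):=\log g_2(e^{y})$ is continuous and additive, so $h(y)=h(1)y$ by Darboux (Theorem~\ref{Theorem_Darboux}) and $f_2(x)=f_2(1)x^{\alpha}$ with $\alpha:=h(1)$; injectivity of $f_2$ excludes $\alpha=0$, and $f_2(1)\ne0$ since $f_2$ is one-signed. Applying the whole chain to $\widetilde f$ gives $f(x)=f(-1)|x|^{\alpha_1}$ on $(-\infty,0)$ with $f(-1),\alpha_1\ne0$, which is the asserted form (the exponents on the two half-lines need not be equal).

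The step I expect to be the main obstacle is establishing continuity. In the sign-regular setting the multiplicative and mid-convexity relations drop out of a single rank-one matrix because zero minors are allowed; here that is impossible, and the workaround is to append an auxiliary constant column to a perturbed $2\times2$ SSR matrix so that one additional $2\times2$ minor of the image has a sign forced purely by the monotonicity of $f_2$. This fixes the common sign $\epsilon_2$ of all minors of the image, hence the \emph{direction} of the inequality we need, and the remaining care lies in correctly handling right- versus left-hand limits (and the four sign/monotonicity cases) as $\delta\to0^{+}$.
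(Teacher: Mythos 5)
Your proof is correct and follows the same overall blueprint as the paper's (strict monotonicity, a multiplicative functional equation extracted from perturbed rank-one matrices, then Darboux), but the two middle steps are carried out differently and the paper's versions are leaner. You first establish full continuity of $f|_{(0,\infty)}$ by forcing multiplicative mid-convexity of the right-limit function and invoking Ostrowski's theorem (Theorem~\ref{Theorem_SR_Fixed_1_continuous}), as in Lemma~\ref{Lemma_SSR_Fixed_positive_increasing_mmc}; the paper skips this step entirely, since strict monotonicity already yields continuity at \emph{some} point $a>0$, and that single point suffices both to pass to the limit in the functional equation (taken with base point $a$ instead of $1$) and for Darboux, which only needs continuity of the additive function $h$ at one point. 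Your matrices $A_\delta,B_\delta$ moreover require $y>1$, a determination of the common sign $\epsilon_2$ of the image's minors, and a bootstrap to full multiplicativity; the paper instead uses the family $\left(\begin{smallmatrix} ax & axy & ax\\ a-\epsilon & ay & a+\epsilon\end{smallmatrix}\right)$, whose three $2\times2$ minors equal $axy\epsilon$, $2ax\epsilon$, $axy\epsilon>0$ for all $x,y>0$, so that whichever common sign the image's minors carry, the two contiguous minors sandwich $f(ax)f(ay)-f(a)f(axy)$ from both sides as $\epsilon\to0^+$: no case split on the direction of monotonicity, no restriction on $y$, no bootstrap. One small inaccuracy in your write-up: the assertion that $\epsilon_2(f[A_\delta])=+1$ ``independently of the direction of monotonicity'' fails when $f|_{(0,\infty)}$ is decreasing (the columns-$\{1,3\}$ minor of the image then flips sign); this is harmless, since the reversed pair of inequalities collapses to the same identity in the limit, but it illustrates why the paper's sign-free sandwich is the cleaner device.
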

\begin{proof}
	By Proposition~\ref{Proposition_geq2_SSR}, the restrictions $f|_{(-\infty,0)}$ and $f|_{(0,\infty)}$ must be either positive or negative, and each restriction must be injective. By adapting the argument in the proof of Proposition~\ref{Theorem_SR_all_sign_monotonic_MMC}(2), one shows that $f|_{(-\infty,0)}$ and $f|_{(0,\infty)}$ are strictly monotonic. Now, define the set
	\[\mathcal{D} := \{x>0 : f ~ \text{is discontinuous at} ~ x\}.\]
	Since strictly monotonic functions can have at most countably many discontinuities, the set $\mathcal{D}$ is countable. Therefore, there exists some $a>0$ at which $f$ is continuous. Consider the SSR matrix
	\begin{align*}
		A(x,y,\epsilon):= \begin{pmatrix}
			ax & axy & ax  \\
			a-\epsilon & ay & a+\epsilon
		\end{pmatrix}, \quad x,y>0 ~ \text{and} ~ a>\epsilon>0.
	\end{align*}
	By hypothesis, $f[A(x,y,\epsilon)]$ is SSR. Examining the contiguous $2\times2$ minors of this matrix and taking the limit as $\epsilon\to0^+$, we obtain
	\begin{align}\label{f_identity_SSR_all_sign}
		f(ax)f(ay) = f(a)f(axy), \quad \forall~x,y>0.
	\end{align} 
	Define the functions
	\begin{align}\label{Function2_g_h}
		g(x) := \frac{f(ax)}{f(a)}, ~ x>0, \qquad h(y):=\log g(e^y), ~ y\in\mathbb{R}.
	\end{align}
	Then $g$ is well-defined, positive, multiplicative, and has at most countably many discontinuities (viz. $x>0$ such that $x\mapsto ax\in \mathcal{D}$). It follows that $h$ is well-defined and also has at most countably many discontinuities (viz. $y\in\mathbb{R}$ such that $y\mapsto ae^y\in \mathcal{D}$). Since $h$ satisfies the Cauchy functional equation, by Theorem~\ref{Theorem_Darboux}, $h(y)=h(1)y$ for all $y\in\mathbb{R}$. This implies
	\begin{align*}
		f(ax) = f(a)x^{h(1)}, ~ \forall ~ x>0, \quad \text{or} \quad f(x) = \frac{f(a)}{a^{h(1)}}x^{h(1)}, ~ \forall ~ x>0.
	\end{align*}
	Since $f|_{(0,\infty)}$ is either positive or negative, and injective, it follows that $f(a)\neq0$ and that $h(1)\neq0$, respectively.
	
	For the interval $(-\infty,0)$, define $p:(0,\infty)\to\mathbb{R}$ by $p(x):=f(-x)$. Since $p$ preserves SSR matrices with positive entries, it follows that $p(x)=\frac{p(\tilde{a})}{\tilde{a}^{q(1)}}x^{q(1)}$ for all $x>0$, where the function $q$ is defined similarly to $h$ in~\eqref{Function2_g_h} and $p$ is continuous at $\tilde{a}>0$. Consequently, $f(x) = \frac{f(-\tilde{a})}{\tilde{a}^{q(1)}}(-x)^{q(1)}$ for all $x<0$ with $f(-\tilde{a}),q(1)\neq0$.
	
	For the $3\times2$ case, the result follows by transposing the above $2\times3$ matrix. For higher-dimensional SSR matrices, the conclusion follows by appending rows and columns to the above matrix (or its transpose) using Theorem~\ref{Theorem_line_insertion_SSR}. This completes the proof.
\end{proof}

The preceding theorem shows that if $f[-]$ preserves the set of all $m\times n$ SSR matrices for fixed integers $m,n\geq2$ with $(m,n)\neq(2,2)$, then $f$ must be a power function on the intervals $(-\infty,0)$ and $(0,\infty)$. The following theorem provides a classification of the admissible exponents for these power functions.

\begin{theorem}\label{Theorem_SSR_all_sign_power_classification}
	Let $m,n\geq1$ be integers, and define $d:=\min\{m,n\}$. The following statements are equivalent for $\alpha\in\mathbb{R}$.
	\begin{itemize}
		\item[(1)] The map $x\mapsto |x|^{\alpha}$ entrywise preserves the set of all $m\times n$ SSR matrices.
		\item[(2)]
		\begin{itemize}
			\item[(a)] For $d=1:$ $\alpha\in\mathbb{R}$.
			\item[(b)] For $d=2:$ $\alpha\in\mathbb{R}\setminus\{0\}$.
			\item[(c)] For $d\geq3:$ $\alpha=1$.
		\end{itemize}
	\end{itemize}
\end{theorem}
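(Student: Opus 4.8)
The plan is to reduce everything to positive matrices. Every SSR matrix has all its $1\times1$ minors of one sign, hence all entries of one sign; thus $x\mapsto|x|^{\alpha}$ sends a negative SSR matrix $A$ to $(-A)^{\circ\alpha}$, and $-A$ is again SSR (multiplying all entries by $-1$ multiplies each $k\times k$ minor by $(-1)^{k}$, and $-A$ has positive entries). So statement~(1) is equivalent to: \emph{$B^{\circ\alpha}$ is SSR for every positive $m\times n$ SSR matrix $B$}, and from now on I would work only with positive matrices and with $x\mapsto x^{\alpha}$.

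The engine is a sign computation for $2\times2$ minors. If $B$ has positive entries and $\alpha\ne0$, the $2\times2$ minor of $B^{\circ\alpha}$ on rows $i<j$, columns $k<l$ equals $b_{il}^{\alpha}b_{jk}^{\alpha}(r^{\alpha}-1)$ with $r=(b_{ik}b_{jl})/(b_{il}b_{jk})>0$, so its sign is $\sgn(\alpha)$ times the sign of the corresponding minor of $B$. Hence, if $B$ is SSR with sign pattern $(1,\epsilon_{2},\dots)$, then $B^{\circ\alpha}$ has all $1\times1$ minors positive and all $2\times2$ minors equal to the single sign $\sgn(\alpha)\,\epsilon_{2}$. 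This already gives $(2)\Rightarrow(1)$ for $d=1$ (trivially) and for $d=2$ (after a transpose $B$ is $2\times n$, and its only minors are $1\times1$ and $2\times2$), and for $d\ge3$ the exponent $\alpha=1$ works trivially. It also shows that for a positive $3\times3$ SSR matrix $B$ and $\alpha\ne0$, \emph{$B^{\circ\alpha}$ is SSR if and only if $\det B^{\circ\alpha}\ne0$} --- the criterion I would use throughout the converse.

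For $(1)\Rightarrow(2)$: there is nothing to do when $d=1$, and when $d=2$ the value $\alpha=0$ is excluded since $|x|^{0}\equiv1$ turns any $2\times2$ SSR matrix into the all-ones matrix, whose $2\times2$ minor is $0$. Assume $d\ge3$. I would first reduce to $d=3$ via Theorem~\ref{Theorem_line_insertion_SSR}: any $3\times3$ SSR matrix can be grown to an $m\times n$ SSR matrix ($m,n\ge3$) by successively adjoining rows and columns to its borders, and submatrices of SSR matrices are SSR, so an entrywise preserver of $m\times n$ SSR matrices restricts to one of $3\times3$ SSR matrices, while a $3\times3$ counterexample extends to an $m\times n$ counterexample. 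It then suffices to produce, for each $\alpha\ne1$, a positive $3\times3$ SSR matrix $B$ with $\det B^{\circ\alpha}=0$ (so $B^{\circ\alpha}$, having positive $1\times1$ and $2\times2$ minors but vanishing determinant, is not SSR). For $\alpha=0$ the all-ones matrix again works. For $\alpha>0$ I would use
\[
A(c)=\begin{pmatrix}1&1&1\\ 1&2&4\\ 1&3&c\end{pmatrix},
\qquad
\det A(c)^{\circ\alpha}=(2^{\alpha}-1)c^{\alpha}+\bigl(4^{\alpha}+3^{\alpha}-2^{\alpha}-12^{\alpha}\bigr);
\]
one checks directly that $A(c)$ is SSR exactly for $c\in(6,\infty)\setminus\{7\}$, that $\det A(c)^{\circ\alpha}$ is strictly increasing in $c$, equals $-(6^{\alpha}+2^{\alpha}-4^{\alpha}-3^{\alpha})<0$ at $c=6$, and tends to $+\infty$ as $c\to\infty$, so it has a unique zero $c(\alpha)>6$; a Descartes'-rule analysis of the exponential sums $6^{\alpha}+2^{\alpha}-4^{\alpha}-3^{\alpha}$ (double root at $0$, at most two roots) and $14^{\alpha}-7^{\alpha}+4^{\alpha}+3^{\alpha}-2^{\alpha}-12^{\alpha}$ (double root at $0$, root at $1$, at most three roots) establishes the first inequality and shows that $\det A(7)^{\circ\alpha}\ne0$ for $\alpha\ne1$, whence $c(\alpha)\ne7$ and $A(c(\alpha))$ is the desired matrix. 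For $\alpha<0$ the construction is cleaner: take $B=\bigl((x_{i}+y_{j})^{1/\alpha}\bigr)_{i,j}$ with $0<x_{1}<x_{2}<x_{3}$ and $0<y_{1}<y_{2}<y_{3}$ (for instance $x_{i}=y_{i}=i$); since $-1/\alpha>0$ this is the classically totally positive Cauchy-type matrix $\bigl(1/(x_{i}+y_{j})^{-1/\alpha}\bigr)$, hence SSR, while $B^{\circ\alpha}=(x_{i}+y_{j})_{i,j}$ has rank at most $2$, so $\det B^{\circ\alpha}=0$. This settles $(1)\Rightarrow(2)$, and with the $d\ge4$ padding reduction the equivalence follows.

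The step I expect to be the main obstacle is the $\alpha>0$, $\alpha\ne1$ sub-case: one has to exhibit an explicit family of $3\times3$ SSR matrices along which the Hadamard-power determinant provably changes sign --- so that the zero is attained at an honest SSR matrix rather than at the already-degenerate boundary matrix $A(7)$ --- and then verify the SSR constraints together with the required sign and root information for the associated Dirichlet polynomials, through Taylor expansions at $\alpha=0$ and Descartes' rule of signs, exactly the delicate bookkeeping already present in Theorems~\ref{Theorem_SSR_3_powers}, \ref{Theorem_SR_power_classification} and~\ref{Theorem_SR_AllSign_Power}. The $\alpha<0$ case dissolves once one invokes the total positivity of Cauchy kernels, and the remaining cases are routine.
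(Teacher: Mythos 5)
Your proposal is correct, and while it shares the paper's overall skeleton (reduce to $3\times3$ via Theorem~\ref{Theorem_line_insertion_SSR} and zero-padding of the resulting singular Hadamard power, observe that for a positive $3\times3$ SSR matrix and $\alpha\neq0$ only the determinant can obstruct strict sign regularity, then manufacture SSR matrices whose $\alpha$-th Hadamard power is singular), the constructions at the technical heart are genuinely different. For $\alpha\in(0,\infty)\setminus\{1\}$ the paper uses two separate two-parameter families $A_2(t,\delta)$ and $A_3(t,\delta)$, each requiring a Taylor expansion in $\delta$ at a degenerate boundary matrix, a Taylor expansion in $t$, a Descartes'-rule analysis, and a two-step intermediate value argument; you replace all of this with the single one-parameter family $A(c)$, in which $\det A(c)^{\circ\alpha}=(2^{\alpha}-1)c^{\alpha}+(4^{\alpha}+3^{\alpha}-2^{\alpha}-12^{\alpha})$ is \emph{strictly monotone in $c$}, so the zero $c(\alpha)$ exists and is unique, and only two Dirichlet-polynomial sign analyses are needed (to place $c(\alpha)$ in $(6,\infty)$ and to rule out $c(\alpha)=7$); I verified both: $6^{\alpha}+2^{\alpha}-4^{\alpha}-3^{\alpha}$ has a double root at $0$, at most two roots, and is positive at $\alpha=1$, while $14^{\alpha}-7^{\alpha}+4^{\alpha}+3^{\alpha}-2^{\alpha}-12^{\alpha}$ has a double root at $0$, a root at $1$, and at most three roots. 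For $\alpha<0$ the paper again runs an IVT argument on the family $A_1(t)$, whereas you invoke the rank-two identity $((x_i+y_j)^{1/\alpha})^{\circ\alpha}=(x_i+y_j)$; this is the cleanest part of your argument but does import one external classical fact not proved in the paper, namely that the kernel $(x+y)^{-\beta}$ is totally positive for every $\beta>0$ (Karlin), which goes beyond the plain Cauchy determinant and should be cited explicitly. Your explicit reduction to positive matrices and the $2\times2$ minor sign computation (showing the $2\times2$ minors of $B^{\circ\alpha}$ all carry the single sign $\sgn(\alpha)\epsilon_2$) are also a nice clarification of steps the paper treats as immediate. Net effect: your route trades the paper's perturbative, boundary-matrix constructions for a monotonicity-plus-uniqueness argument and a kernel identity, at the cost of one classical citation; both are complete proofs.
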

\begin{proof}
	The implication (2)$\implies$(1) is straightforward. We now show (1)$\implies$(2). The cases where $d=1$ and $d=2$ are trivial, so we focus on $d\geq3$.
	
	Let $d=3$. For each $\alpha\in\mathbb{R}\setminus\{1\}$, we construct a $3\times3$ SSR matrix $A$ such that $\det A^{\circ\alpha}=0$, thereby showing that such powers do not preserve $3\times3$ SSR matrices. We first present such examples for each $\alpha<0$. Consider the following matrix
	\begin{align*}
		A_1(t) := \begin{pmatrix}
			1 & 1 & 1 \\
			1 & 1+t & 1+2t \\
			1 & 1+3t & 1+7t
		\end{pmatrix}, \quad t\geq0.
	\end{align*}
	For this matrix, note that each $2\times2$ minor is of the form $ut+vt^2$, with $u>0, v\geq0$; and $\det A_1(t) = t^2$. Thus, $A_1(t)$ is an SSR matrix for all $t>0$. Fix $\alpha_0<0$, and define
	\[F_1(t):=\det A_1(t)^{\circ\alpha_0}, \quad \text{for} ~  t\geq0.\] 
	We claim that $F_1(t)=0$ for some $t>0$. A Taylor expansion of $F_1$ at $t=0$ yields
	\begin{align*}
		F_1(t) = \alpha_0^2t^2 + O(t^3) ~ \implies ~ \displaystyle{\lim_{t\to0^+}\frac{F_1(t)}{t^2}} = \alpha_0^2.
	\end{align*}
	Thus, for sufficiently small $t>0$, say $t_1$, we have $F_1(t_1)>0$. Next, consider
	\[\det A_1(1)^{\circ\alpha} = -2^{\alpha} + 3^{\alpha} + 4^{\alpha} -8^{\alpha} -12^{\alpha} + 16^{\alpha}.\] 
	By Theorem~\ref{Jameson}, this expression has at most three real roots. Its Taylor expansion at $\alpha=0$ gives $\det A_1(1)^{\circ\alpha} = c\alpha^2 + O(\alpha^3)$, where
	\begin{align*}
		c = \frac{1}{2}(-(\log2)^2 +(\log3)^2 +(\log4)^2 -(\log8)^2 -(\log12)^2 +(\log16)^2) < 0.
	\end{align*}
	Thus, $\det A_1(1)^{\circ\alpha}$ has a double root at $\alpha=0$ and
	\begin{align*}
		\displaystyle{\lim_{\alpha\to0^+}\frac{\det A_1(1)^{\circ\alpha}}{\alpha^2}} = c <0.
	\end{align*}
	Hence, $\det A_1(1)^{\circ\alpha}<0$ for sufficiently small $\alpha>0$. Since $\det A_1(1)^{\circ\alpha}>0$ at $\alpha=1$, it follows that $\det A_1(1)^{\circ\alpha}$ must have a root in the interval $(0,1)$. Thus $\det A_1(1)^{\circ\alpha}<0$ for all $\alpha<0$, including $\alpha_0$. In particular, $F_1(1)<0$. By continuity of $F_1$, the intermediate value theorem guarantees the existence of $\tilde{t}\in(t_1,1)$ such that $F_1(\tilde{t})=0$.
	
	To discard powers in the interval $(0,1)$, consider the matrix
	\begin{align*}
		A_2(t,\delta) := \begin{pmatrix}
			1 & 1 & 1 \\[2pt]
			1 & 1+\frac{1}{3}t & 1+\frac{2}{3}t \\[2pt]
			1 & 1+t & 1+(2+\delta)t 
		\end{pmatrix}, \quad t,\delta \geq 0.
	\end{align*}
	Note that all $2\times2$ minors take the form $ut + v\delta t + w\delta t^2$, where $u>0$, and $v,w \geq0$; and $\det A_2(t,\delta) = \frac{1}{3}\delta t^2$. Consequently, $A_2(t,\delta)$ is an SSR matrix for all positive values of $t$ and $\delta$. Fix $\alpha_0\in(0,1)$, and define the function
	\begin{align*}
		F_2(t, \delta):=\det A_2(t,\delta)^{\circ\alpha_0}, \quad \text{for} ~  t,\delta \geq 0.
	\end{align*}
	We aim to show the existence of $t,\delta>0$ such that $F_2(t,\delta)=0$. To establish this, we proceed through the following steps:
	\begin{itemize}[leftmargin=0pt,label={}]
		\item \textbf{Step 1.} Choose $t_1>0$, and identify a corresponding $\delta_1>0$ such that $F_2(t_1,\delta_1)<0$.
		
		\item \textbf{Step 2.} With $\delta_1$ fixed, find $t_2>0$ for which $F_2(t_2,\delta_1)>0$.
		
		\item \textbf{Step 3.} By continuity of $F_2(t,\delta_1)$ in $t$, the intermediate value theorem guarantees the existence of $\tilde{t}\in(\min\{t_1,t_2\}, \max\{t_1,t_2\})$ such that $F_2(\tilde{t},\delta_1)=0$. 
	\end{itemize}
	To carry out step 1, take $t=3$. A Taylor expansion of $F_2(3,\delta)$ at $\delta=0$ gives
	\begin{align}\label{SSR_example_F2}
		F_2(3,\delta) = \det A_2(3,0)^{\circ\alpha_0} + O(\delta).
	\end{align}
	Now examine the following determinant for $\alpha\in(0,1)$:
	\[\det A_2(3,0)^{\circ\alpha} = -2^{\alpha} + 3^{\alpha} + 4^{\alpha} - 7^{\alpha} - 12^{\alpha} + 14^{\alpha}.\] 
	By Theorem~\ref{Jameson}, this function has at most three real roots. A Taylor expansion at $\alpha=0$ yields $\det A_2(3,0)^{\circ\alpha} = c\alpha^2 + O(\alpha^3)$, where
	\begin{align*}
		c = \frac{1}{2}\left(-(\log2)^2 + (\log3)^2 + (\log4)^2 - (\log7)^2 - (\log12)^2 + (\log14)^2\right).
	\end{align*}
	This shows that $\alpha=0$ is a double root. Furthermore, since $A_2(3,0)$ is a singular SR matrix with sign pattern $\epsilon=(1,1,\epsilon_3)$, where $\epsilon_3\in\{\pm1\}$, Theorem~\ref{Theorem_SR_power_classification} implies that $\det A_2(3,0)^{\circ\alpha}\leq0$ for all $\alpha\in(0,1)$. If equality holds at some $\alpha\in(0,1)$, then $\alpha$ will be a local maximum in $(0,1)$, which results in two more real roots at that point, contradicting Theorem~\ref{Jameson}. Therefore, $\det A_2(3,0)^{\circ\alpha}<0$ throughout $(0,1)$. In particular, $\det A_2(3,0)^{\circ\alpha_0}<0$, so equation~\eqref{SSR_example_F2} implies that for sufficiently small $\delta>0$, say $\delta_1$, we have $F_2(3,\delta_1)<0$.
	
	For step 2, fix $\delta=\delta_1$ and expand $F_2(t,\delta_1)$ at $t=0$ using a Taylor series to obtain
	\begin{align*}
		F_2(t,\delta_1) = \frac{1}{3}\delta_1 \alpha_0^2 t^2 + O(t^3) ~ \implies ~ \displaystyle{\lim_{t\to0^+}\frac{F_2(t,\delta_1)}{t^2}} = \frac{1}{3}\delta_1\alpha_0^2>0.
	\end{align*} 
	Therefore, for sufficiently small $t>0$, say $t_2$, we have $F_2(t_2, \delta_1)>0$. Finally, by the intermediate value theorem, there exists $\tilde{t}\in(t_2,3)$ such that $F_2(\tilde{t},\delta_1)=0$.
	
	To eliminate powers in $(1,\infty)$, consider the matrix
	\begin{align*}
		A_3(t,\delta) := \begin{pmatrix}
			1 & 1 & 1 \\
			1 & 1+2t & 1+3t \\
			1 & 1+6t & 1+(9-\delta)t 
		\end{pmatrix}, \quad 0\leq\delta\leq1, ~ t\in\left[0,\frac{2-\delta}{2\delta}\right).
	\end{align*}
	Observe that all $2\times2$ minors are either of the form $ut$ with $u>0$, or $(v-\delta)t$ with $v>\delta$, or $2t-\delta t-2\delta t^2$; and $\det A_3(t,\delta)= -2\delta t^2$. These conditions ensure that $A_3(t,\delta)$ is an SSR matrix for all $0<\delta\leq1$ and $t\in\left(0,\frac{2-\delta}{2\delta}\right)$. Fix $\alpha_0\in(1,\infty)$, and define the function
	\begin{align*}
		F_3(t,\delta):=\det A_3(t,\delta)^{\circ\alpha_0}, \quad \text{for} ~ 0\leq\delta\leq1, ~ t\in\left[0,\frac{2-\delta}{2\delta}\right).
	\end{align*}
	We claim that there exist $t,\delta>0$ within the specified ranges such that $F_3(t,\delta)=0$. To show this, we follow the same strategy as in the previous case.
	
	First, fix $t_1\in\left(0,\frac{2-\delta}{2\delta}\right)$, and consider the Taylor expansion of $F_3(t_1,\delta)$ at $\delta=0$
	\begin{align}\label{SSR_F3}
		F_3(t_1,\delta) = \det A_3(t_1,0)^{\circ\alpha_0} + O(\delta).
	\end{align}
	To establish the existence of a suitable $\delta_1$, we first show that $\det A_3(t_1,0)^{\circ\alpha}>0$ for all $\alpha>1$. Since $A_3(t_1,0)$ is a singular SR matrix with sign pattern $\epsilon=(1,1,\epsilon_3)$, where $\epsilon_3\in\{\pm1\}$, Theorem~\ref{Theorem_SR_power_classification} ensures that $\det A_3(t_1,0)^{\circ\alpha}\geq0$ for all $\alpha\geq1$. It remains to show that this determinant is positive for all $\alpha>1$. Consider
	\[\det A_3(t_1,0)^{\circ\alpha} = -(1+2t_1)^{\alpha} + (1+3t_1)^{\alpha} + (1+6t_1)^{\alpha} - (1+9t_1)^{\alpha} - (1+9t_1+18t_1^2)^{\alpha} + (1+11t_1+18t_1^2)^{\alpha}.\] 
	By Theorem~\ref{Jameson}, this expression has at most three real roots. Its Taylor expansion at $\alpha=0$ gives
	\begin{align*}
		\det A_3(t_1,0)^{\circ\alpha} = \left(-\log(1+3t_1)\log(1+6t_1) + \log(1+2t_1)\log(1+9t_1)\right)\alpha^2 + O(\alpha^3),
	\end{align*}
	showing a double root at $\alpha=0$. Moreover, since $A_3(t_1,0)$ is singular, $\det A_3(t_1,0)^{\circ\alpha}$ has another root at $\alpha=1$. Thus, $\det A_3(t_1,0)^{\circ\alpha}>0$ for all $\alpha>1$. Now, equation~\eqref{SSR_F3} ensures that for sufficiently small $\delta>0$, say $\delta_1$, we have $F_3(t_1,\delta_1)>0$. 
	
	Now fix this $\delta_1>0$, and expand $F_3(t,\delta_1)$ at $t=0$ to obtain
	\[F_3(t,\delta_1) = -2\delta_1 \alpha_0^2t^2 + O(t^3) ~ \implies ~ \displaystyle{\lim_{t\to0^+}\frac{F_3(t,\delta_1)}{t^2}} = -2\delta_1\alpha_0^2<0.\] 
	Hence, there exists small $t>0$, say $t_2$, such that $F_3(t_2,\delta_1)<0$. By continuity and the intermediate value theorem, there exists $\tilde{t}\in(\min\{t_1,t_2\}, \max\{t_1,t_2\})$ such that $F_3(\tilde{t},\delta_1)=0$.
	
	Finally, for $n\times n$ SSR matrices with $n\geq4$, or for $m\times n$ SSR matrices with $m\neq n$ and $d\geq3$, the matrices above can be extended to SSR matrices of the desired dimensions using Theorem~\ref{Theorem_line_insertion_SSR}. These matrices remain SSR, but their images under the entrywise power function contain a $3\times3$ submatrix with vanishing determinant. This concludes the proof.
\end{proof}

Equipped with the necessary intermediate results, we now present the proof of Theorem~\ref{Theorem_SSR_Non-Fixed_Classification}.

\begin{proof}[Proof of Theorem \ref{Theorem_SSR_Non-Fixed_Classification}]
	That (2)$\implies$(1) is trivial; we show (1)$\implies$(2) to complete the proof. The cases where $m=n=1$, and where $m\neq n$ with $d=1$ follow by Theorem~\ref{Theorem_SSR_all_sign_pos_neg}. It remains to consider the cases $m=n$ with $n\geq3$, and $m\neq n$ with $d=2$. In these situations, Theorem~\ref{Theorem_SSR_all_sign_f_form_power} ensures that $f$ must be of the form
	\begin{align*}
		f(x) = \begin{cases}
			c_1|x|^{\alpha} ~ \text{for some} ~ \alpha,c_1\in\mathbb{R}\setminus\{0\}, & \text{if} ~ x<0, \\
			c_2x^{\alpha} ~ \text{for some} ~ \alpha,c_2\in\mathbb{R}\setminus\{0\}, & \text{if} ~ x>0.
		\end{cases}
	\end{align*}
	The classification of power functions that preserve SSR is given in Theorem~\ref{Theorem_SSR_all_sign_power_classification}. This completes the proof.
\end{proof}

We conclude this section by classifying the functions that entrywise preserve the set of all (strict) sign regular matrices with non-negative (positive) entries. We begin with the case of SR matrices.

\begin{cor}
	Let $m,n\geq1$ be integers, and define $d:=\min\{m,n\}$. Suppose $f:[0,\infty)\to\mathbb{R}$ is a function. Then the following statements are equivalent.
	\begin{itemize}
		\item[(1)] $f[-]$ preserves the set of all $m\times n$ SR matrices with non-negative entries.
		
		\item[(2)] We have the following two cases:
		\begin{itemize}
			\item [(a)] If $m=n$, then
			\begin{itemize}
				\item[(i)] For $n=1,2:$ The function $f$ is non-negative on $[0,\infty)$.
				
				\item[(ii)] For $n=3:$ $f(x)=cx^{\alpha}$ for some $\alpha,c\geq0$.
				
				\item[(iii)] For $n\geq4:$ $f(x)=cx^{\alpha}$ for some $\alpha\in\{0,1\}$ and some $c\geq0$.
			\end{itemize}
			
			\item [(b)] If $m\neq n$, then
			\begin{itemize}
				\item[(i)] For $d=1:$ same form as in case (a)(i).
				
				\item[(ii)] For $d=2:$ same form as in case (a)(ii).
				
				\item[(iii)] For $d\geq3:$ same form as in case (a)(iii).
			\end{itemize}
		\end{itemize}	
	\end{itemize}
\end{cor}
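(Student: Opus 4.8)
The plan is to obtain this corollary by specializing the arbitrary-sign analysis of Section~\ref{Section_Not_Fixed_Sign_Pattern} to the domain $[0,\infty)$. The implication (2)$\implies$(1) is routine: a constant map sends every matrix to a rank-one matrix with non-negative entries, hence to a sign regular matrix, while the power maps with the listed exponents preserve sign regularity by Theorems~\ref{Theorem_SR_AllSign_Signum} and~\ref{Theorem_SR_AllSign_Power}. So the substance is in proving (1)$\implies$(2), which I would organize as follows.

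First I would note that applying $f[-]$ to the $1\times1$ matrix $(x)$ with $x\geq0$ forces $f\geq0$ on $[0,\infty)$, since the image is again required to consist of non-negative entries. This already settles the cases $m=n=1$, $m=n=2$, and $\min\{m,n\}=1$: a matrix of any of these shapes with non-negative entries has at most one minor of each size $\geq2$ (and none at all in the $1\times k$ or $k\times1$ cases), so it is automatically sign regular, and conversely non-negativity of $f$ is exactly what is needed. For the remaining shapes -- $m=n\geq3$, and $m\neq n$ with $d\geq2$ -- I would rerun the arguments of Propositions~\ref{Theorem_SR_all_sign_pos_neg} and~\ref{Theorem_SR_all_sign_monotonic_MMC} and of Theorems~\ref{Theorem_SR_all_sign_continuous} and~\ref{Theorem_SR_All_Sign_Cont_Power_Signum} using only matrices with non-negative entries; inspecting those proofs, every test matrix used to control the behaviour of $f$ on $[0,\infty)$ already has non-negative (indeed positive) entries, and the half of each statement dealing with $(-\infty,0)$ simply does not arise here. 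The upshot is: if $f(0)\neq0$ then $f$ is a positive constant on $[0,\infty)$, and if $f(0)=0$ then $f(x)=cx^{\alpha}$ on $(0,\infty)$ for some $\alpha\geq0$ and $c\geq0$, where the exponent $\alpha=0$ (with $f(0)=0$) amounts to a scaled signum function.

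It then remains to decide, dimension by dimension, which of these families -- constants, scaled signum, and genuine powers $cx^{\alpha}$ with $\alpha>0$ -- actually preserve the relevant class. A constant map always does, its image being rank one. By Theorem~\ref{Theorem_SR_AllSign_Signum} the signum function survives precisely when $m=n\leq3$ or $m\neq n$ with $\min\{m,n\}\leq2$; and by Theorem~\ref{Theorem_SR_AllSign_Power}, for $m=n\leq3$ or $d\leq2$ every exponent $\alpha>0$ is admissible, whereas for $m=n\geq4$ or $d\geq3$ only $\alpha=1$ remains. Collating these cut-offs against the stated list of matrix shapes (and using $f\geq0$ to force $c\geq0$ throughout) yields the forms recorded in part~(2). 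The main obstacle I anticipate is organizational rather than conceptual: one must check that each test matrix imported from Section~\ref{Section_Not_Fixed_Sign_Pattern} is, or can be chosen to be, non-negative, and that the case split on $f(0)$, on $c$, and on $\alpha$ is exhaustive -- no machinery beyond what has already been established is required.
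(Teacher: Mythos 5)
Your proposal is correct and takes essentially the same route as the paper: the paper's own proof is a one-line deferral to Theorem~\ref{Theorem_SR_Non-Fixed_Classification} (``the proof follows similarly\dots except that $c$ must match the sign of the matrix entries''), and your plan of rerunning the arguments of Propositions~\ref{Theorem_SR_all_sign_pos_neg}--\ref{Theorem_SR_all_sign_monotonic_MMC} and Theorems~\ref{Theorem_SR_all_sign_continuous}--\ref{Theorem_SR_All_Sign_Cont_Power_Signum} on the (already non-negative) test matrices and then collating against Theorems~\ref{Theorem_SR_AllSign_Signum} and~\ref{Theorem_SR_AllSign_Power} is exactly what that deferral amounts to. One point your collation correctly surfaces: for $n=3$ (and for $m\neq n$ with $d=2$) the scaled signum with $c>0$ genuinely preserves the class, yet the stated form $cx^{\alpha}$ under the convention $0^0=1$ does not literally include it -- this is an imprecision in the corollary's statement (it compresses the $f(0)=0$, $\alpha=0$ case of Theorem~\ref{Theorem_SR_Non-Fixed_Classification}), not a gap in your argument.
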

\begin{proof}
	The proof is similar to that of Theorem~\ref{Theorem_SR_Non-Fixed_Classification}, except that the constant ``$c$'' is no longer an arbitrary real number: it must match the sign of the matrix entries as the sign of the $1\times1$ minors of SR matrices must be preserved.
\end{proof}

We state below the analogous result for SSR matrices.
\begin{cor}
	Let $m,n\geq1$ be integers with $(m,n)\neq(2,2)$, and define $d:=\min\{m,n\}$. Let $f:(0,\infty)\to\mathbb{R}$ be a function. Then the following statements are equivalent.
	\begin{itemize}
		\item[(1)] $f[-]$ preserves the set of all $m\times n$ SSR matrices with positive entries.
		
		\item[(2)] We have the following two cases:
		\begin{itemize}
			\item[(a)] If $m=n$, then
			\begin{itemize}
				\item[(i)] For $n=1:$ The function $f$ is positive on $(0,\infty)$.
				
				\item[(ii)] For $n\geq3:$ $f(x)=cx$ for some $c>0$.
			\end{itemize}
			
			\item[(b)] If $m\neq n$, then
			\begin{itemize}
				\item[(i)] For $d=1:$ same form as in case (a)(i).
				
				\item[(ii)] For $d=2:$ $f(x)=cx^{\alpha}$ for some $\alpha\in\mathbb{R}\setminus\{0\}$ and some $c>0$.
				
				\item[(iii)] For $d\geq3:$ same form as in case (a)(ii).
			\end{itemize}
		\end{itemize}
	\end{itemize}
\end{cor}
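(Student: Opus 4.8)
The plan is to derive this corollary from Theorem~\ref{Theorem_SSR_Non-Fixed_Classification} and its supporting results, by specializing throughout to SSR matrices with \emph{positive} entries and then using the $1\times1$ minors to fix the sign of the leading constant. The implication $(2)\Longrightarrow(1)$ is immediate: when $d\geq3$, a positive homothety $x\mapsto cx$ with $c>0$ clearly sends a positive-entry SSR matrix to a positive-entry SSR matrix; when $m\neq n$ and $d=2$, the map $x\mapsto cx^{\alpha}$ with $c>0$ and $\alpha\neq0$ has image $c\,A^{\circ\alpha}$, whose $1\times1$ minors are positive and whose $2\times2$ minors agree in sign with those of $A^{\circ\alpha}$, so the claim follows from Theorem~\ref{Theorem_SSR_all_sign_power_classification}; and when $d=1$ a positive function sends any positive $1\times n$ (or $m\times1$) matrix---automatically SSR---to another such matrix.

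For $(1)\Longrightarrow(2)$, I would first note that since $f[-]$ must carry a positive-entry SSR matrix to a positive-entry SSR matrix, applying it to the $1\times1$ matrix $(x)$ with $x>0$ forces $f(x)>0$; hence $f$ is positive on $(0,\infty)$ (the positive-entry specialization of Theorem~\ref{Theorem_SSR_all_sign_pos_neg}). This already disposes of the cases $m=n=1$ and $m\neq n$ with $d=1$, yielding (a)(i) and (b)(i). It remains to treat $m=n\geq3$ and $m\neq n$ with $d\geq2$, the case $(m,n)=(2,2)$ being excluded as it lies outside Theorem~\ref{Theorem_SSR_Non-Fixed_Classification} and is instead governed by Lemma~\ref{Lemma_2by2_SSR}.

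The main point to verify---and the only mild obstacle---is that the arguments behind Theorem~\ref{Theorem_SSR_all_sign_f_form_power} and Theorem~\ref{Theorem_SSR_all_sign_power_classification} use only SSR matrices with positive entries (the test matrices $A(x,y,\epsilon)$ and the families $A_1(t)$, $A_2(t,\delta)$, $A_3(t,\delta)$ appearing there all have positive entries), so they apply verbatim to a function $f:(0,\infty)\to\mathbb{R}$. Granting this, Theorem~\ref{Theorem_SSR_all_sign_f_form_power} forces $f(x)=c\,x^{\alpha}$ on $(0,\infty)$ with $c\neq0$ and $\alpha\neq0$; positivity of $f$ then gives $c>0$, with no further constraint on $\alpha$ coming from the $1\times1$ minors. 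Finally, Theorem~\ref{Theorem_SSR_all_sign_power_classification} pins down the exponent: $\alpha\in\mathbb{R}\setminus\{0\}$ when $d=2$ (case (b)(ii)) and $\alpha=1$ when $d\geq3$, i.e.\ $f(x)=cx$ with $c>0$ (cases (a)(ii) and (b)(iii)). The passage from small dimensions to $n\times n$ with $n\geq4$, or to $m\times n$ with $m\neq n$ and $d\geq3$, is handled by the row/column insertion result of Theorem~\ref{Theorem_line_insertion_SSR}, exactly as in the proof of Theorem~\ref{Theorem_SSR_Non-Fixed_Classification}.
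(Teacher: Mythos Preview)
Your proposal is correct and follows the same route as the paper, which simply remarks that the proof of Theorem~\ref{Theorem_SSR_Non-Fixed_Classification} goes through verbatim once one restricts to positive entries and notes that the leading constant must be positive. Your observation that the test matrices in Theorems~\ref{Theorem_SSR_all_sign_f_form_power} and~\ref{Theorem_SSR_all_sign_power_classification} already have positive entries is exactly the point, and your use of Theorem~\ref{Theorem_line_insertion_SSR} to pass to larger sizes matches the paper as well. One small wording issue: for general $m,n$ you cannot literally ``apply $f$ to the $1\times1$ matrix $(x)$''; rather, any $x>0$ occurs as an entry of some $m\times n$ positive-entry SSR matrix (e.g.\ start from $(x)$ and repeatedly invoke Theorem~\ref{Theorem_line_insertion_SSR}), and then positivity of the output entries forces $f(x)>0$---but this is precisely what your citation of Theorem~\ref{Theorem_SSR_all_sign_pos_neg} together with Remark~\ref{Remark_SSR_Property_Inherit} is meant to cover.
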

\begin{proof}
	The proof proceeds in a similar manner to that of Theorem~\ref{Theorem_SSR_Non-Fixed_Classification}, except that only non-zero constants whose sign matches that of the matrix entries are allowed.
\end{proof}

	\subsection*{Acknowledgments}
	We thank Apoorva Khare for a detailed reading of an earlier draft and for providing valuable feedback. We are also grateful to the anonymous referee for carefully going through the manuscript and offering several constructive comments that helped improve the exposition. The first author was partially supported by ANRF Prime Minister Early Career Research Grant ANRF/ECRG/2024/002674/PMS (ANRF, Govt.~of India), INSPIRE Faculty Fellowship Research Grant DST/INSPIRE/04/2021/002620 (DST, Govt.~of India), and IIT Gandhinagar Internal Project: IP/IP/50025.

\end{document}